\newtheoremstyle{theoreme}
 {\topsep} 
 {\topsep} 
 {} 
 {0pt} 
 {\bfseries} 
 {\newline} 
 {3pt} 
 {} 
\newtheoremstyle{proposition}
{\topsep} 
 {\topsep} 
 {} 
 {0pt} 
 {\bfseries} 
 {\newline} 
 {3pt} 
 {} 
 \newtheoremstyle{lemma}
 {\topsep} 
 {\topsep} 
 {} 
 {0pt} 
 {\bfseries} 
 {\newline} 
 {3pt} 
 {} 
\newtheoremstyle{definition}
 {\topsep}
 {\topsep}
 {}
 {0pt}
 {\bfseries}
 {\newline}
 {3pt}
 {}
\newtheoremstyle{remarque}
 {\topsep}
 {\topsep}
 {}
 {0pt}
 {\bfseries}
 {: }
 {3pt}
 {} 
\theoremstyle{theoreme}
\newtheorem{Thm}{Theorem}[section]
\newtheorem*{Thm*}{Theorem}
\newtheorem*{Notation}{Notation}
\newtheorem{Ex}[Thm]{Example}
\theoremstyle{lemma}
\newtheorem{Lem}[Thm]{Lemma}
\theoremstyle{proposition}
\newtheorem{Prop}[Thm]{Proposition}
\newtheorem*{Prop*}{Proposition}
\newtheorem{Cor}[Thm]{Corollary}
\theoremstyle{definition}
\newtheorem{Def}[Thm]{Definition}
\theoremstyle{remarque}
\newtheorem{Rem}[Thm]{Remark}
\newtheorem{Question}{Question}
\newtheorem*{Claim*}{Claim}
\newcommand\N{\mathbb{N}}
\newcommand\R{\mathbb{R}}
\newcommand\Z{\mathbb{Z}}
\newcommand\Q{\mathbb{Q}}
\newcommand\comment[1]{}
\newcommand\valuedfield{K\overset{v}{\longrightarrow}G\overset{v_G}{\longrightarrow}\Gamma}
\newcommand\eqrel{\sim_{\phi}}
\newcommand\qophi{\precsim_{\phi}}
\newcommand\ringv{\mathcal{O}_v}
\newcommand\ringw{\mathcal{O}_w}
\newcommand\unitsv{\mathcal{U}_v}
\newcommand\unitsw{\mathcal{U}_w}
\newcommand\idealv{\mathcal{M}_v}
\newcommand\idealw{\mathcal{M}_w}
\newcommand\dom{\text{Dom}}
\newcommand\constants{\mathcal{C}}
\newcommand\supp{\text{supp}}
\newcommand\Hahn{\text{H}}
\newcommand\qohom{\precsim_{\hat{\omega}}}
\newcommand\quotientval{\frac{v}{w}}
\newcommand\compv{(B_{v_G})_{\gamma}}
\newcommand\grouppsiplus{(G_{\psi})^{\lambda}}
\newcommand\grouppsiminus{(G_{\psi})_{\lambda}}
\newcommand\comppsi{(B_{\psi})_{\lambda}}
\newcommand\groupsigplus{G_{\sigma}(f,+)}
\newcommand\groupsigminus{G_{\sigma}(f,-)}
\newcommand\compsig{H_{\sigma}(f)}
\newcommand\divhull[1]{\widehat{#1}}
\begin{document}
\title{The differential rank of a differential-valued field}
\author[1]{Salma Kuhlmann}
\author[1]{Gabriel Lehéricy\footnote{email: gabriel.lehericy@uni-konstanz.de}}
\affil[1]{Fachbereich Mathematik und Statistik, Universität Konstanz, 78457 Germany}

\maketitle
\begin{abstract}
  We develop a notion of  (principal) differential rank for differential-valued fields, in analog of the exponential rank developed in \cite{Kuhlmann} 
  and of the difference rank developed in \cite{KuhlmannPointMatusinski}. We give several characterizations of this rank. 
   We then give a method to define a derivation on a field of generalized power series and use this method to show 
that any totally ordered set can be realized as the 
principal differential rank of a H-field.\\

\emph{keywords:} Valuation, differential field, differential-valued field, H-field, asymptotic couple, generalized power series\\

\emph{MSC 2010:} 12H05, 12J10, 12J15, 13F25, 16W60.
 \end{abstract}

\section*{Introduction}  
  The rank of a valued field (i.e the order-type of the set of coarsenings of the given valuation, ordered by inclusion, see
\cite{EnglerPrestel} and \cite{ZariskiSamuel}) 
is an important invariant. It has three equivalent characterizations: 
one at the level of the valued field $(K,v)$ itself, another one at the level of the value group $G:=v(K^{\times})$ and a third one at the level of 
the value chain $\Gamma:=v_G(G^{\neq0})$ of the value group. Recently, notions of ranks have been developed for valued 
  fields endowed with an operator; examples of this are the exponential rank of an ordered exponential field (see \cite[Chapter 3, Section 2]{Kuhlmann})
  and the difference rank of a valued difference field (see \cite[Section 4]{KuhlmannPointMatusinski}). 
  In this paper, we are 
  interested in pre-differential-valued fields as introduced by Aschenbrenner and v.d.Dries in \cite{Aschdries2}, i.e valued fields endowed with 
  a derivation which is in some sense compatible with the valuation (see Section \ref{differentialranksection} below for the precise 
definition). 
  We will pay a special attention to the class of H-fields (see Definition \ref{Hfielddef}) introduced by Aschenbrenner and v.d.Dries in \cite{Aschdries2}
  and \cite{Aschdries3}. H-fields are a generalization of Hardy fields (introduced by Hardy in \cite{Hardy}). H-fields are particularly interesting because of the central role they play in the theory
of transseries and in the model-theoretic study of Hardy fields (see \cite{DriesAschHoev}). A substantial part of our work relies on the theory of asymptotic couples. 
 Asymptotic couples were introduced by Rosenlicht in \cite{Rosenlicht} and later studied by Aschenbrenner and
   v.d.Dries in \cite{Aschdries} and also play a central 
   role in the study of H-fields and transseries (see \cite{Aschdries2},\cite{Aschdries3} and \cite{DriesAschHoev}). 
   They consist of an ordered abelian group $G$ together with a map $\psi: G^{\neq0}\to G$ satisfying certain properties 
   (see Section \ref{acsubsection} below) and appear naturally as the value group of pre-differential-valued fields.

  \comment{The first goal of this paper  is to introduce and study 
  the notion of differential rank for pre-differential-valued fields (that is, the order type of the set of coarsenings 
  of a given valuation which are compatible with the logarithmic derivative)
  in analog of the already existing notions of rank.
  We do this by first defining the general notions of the  ``$\phi$-rank'' and of ``principal $\phi$-rank'' of a valued field 
endowed with an arbitrary operator $\phi$ (see Section \ref{phiranksection} below) which 
  generalizes at the same time the definitions of the (principal) exponential rank of an ordered exponential field and the definition 
  of the (principal) difference rank of a  valued  difference field. This allows us to define the (principal) differential rank of a valued differential field $(K,v,D)$ as the (principal) $\phi$-rank of the valued 
  field $(K,v)$ where $\phi$ is  defined as the logarithmic derivative. }

The first goal of this paper is 
to  introduce and study the differential rank of a differential-valued field. 
We start by defining the general notions of  ``$\phi$-rank'' and of ``principal $\phi$-rank'' of a valued field 
endowed with an arbitrary operator $\phi$ (see Definition \ref{phirankdef} below). This allows us to define the (principal) differential rank of a 
  pre-differential-valued field $(K,v,D)$ as the (principal) $\phi$-rank of the valued 
  field $(K,v)$ where $\phi$ is  defined as the logarithmic derivative.
  In Theorem \ref{diffrankthreelevelsprop}, we show that the differential rank 
of a pre-differential-valued field $(K,v,D)$ is equal to the $\psi$-rank (see Definition \ref{phirankdef}) of the asymptotic couple $(G,\psi)$ associated to $(K,v,D)$, and that it is also equal to the 
$\omega$-rank of the value chain $\Gamma$ if $K$ is a H-field, where $\omega:=\psi_{\Gamma}$ is the map induced by $\psi$ on $\Gamma$ (see Definition \ref{definducedmap}). 
We then characterize the differential rank via 
 the maps $\phi$, $\psi$ and $\omega$ (see Theorem \ref{diffrankcharbyqosThm}). 
 
We want to point out that the notion of (principal) $\phi$-rank defined in Section \ref{phiranksection} generalizes
simultaneously that of the (principal) exponential rank defined in \cite{Kuhlmann} and that
  of the (principal) difference rank defined in \cite{KuhlmannPointMatusinski}. 
  \comment{Theorems \ref{diffrankthreelevelsprop} and \ref{diffrankcharbyqosThm} of this paper are analogs 
  of the results on the exponential rank and on the difference rank in \cite{Kuhlmann} and \cite{KuhlmannPointMatusinski}.}
  Theorem \ref{diffrankthreelevelsprop} is the analog of
    \cite[Theorem 3.25]{Kuhlmann} and of  \cite[Theorem 4.7]{KuhlmannPointMatusinski}, and 
   Theorem \ref{diffrankcharbyqosThm}  corresponds to  \cite[Theorem 3.30]{Kuhlmann} and to 
    \cite[Theorem 5.3, Corollary 5.4 and Corollary 5.5]{KuhlmannPointMatusinski}.
 However, we note that the exponential rank and the difference rank both have a characterization 
in terms of residue fields: a coarsening $w$ of $v$ lies in the difference rank of $(K,v,\sigma)$ if and only if $\sigma$ induces an automorphism on the residue 
field $Kw$, and a similar result holds for the exponential rank. It turned out that this characterization does not go through to the differential case; although 
the map induced by the derivation on $Kw$ does play a role in determining whether $w$ is compatible with the logarithmic derivative, it is not enough to characterize the differential rank. 
 Theorem \ref{characterizationofdiffrankThm} gives a full characterization of the valuations $w$ which lie in the differential rank.

  Our study of the differential rank revealed a problem  which does not arise in the exponential case nor in the difference case. 
  This problem is related to the existence of ``cut points'' in asymptotic couples, which is a notion we introduce in Section \ref{acsubsection} (Definition \ref{cutpointdef}). A cut point of an asymptotic couple 
$(G,\psi)$ is an element $c$ of $G$ such that $\psi(c)$ is archimedean-equivalent to $c$.
If $(G,\psi)$ is an asymptotic couple, then the $\psi$-rank of $G$ is too coarse to give a satisfactory description of the behavior of $\psi$. Indeed, 
 if $c$ is a cut point then any $\psi$-compatible convex subgroup of $G$ must contain $c$, which means that the $\psi$-rank of $G$ does not give any information on the behavior 
of $\psi$ between $c$ and $0$.  This lead us to introduce the notion of (principal) unfolded differential rank. The unfolded differential rank is obtained by 
considering a family $(\psi_g)_{g\in G}$ of translates of $\psi$. As $g$ goes to $0$, the cut points of $\psi_g$ also go to $0$, which means that 
the $\psi_g$-rank of $G$ gives more information about $\psi$ than the $\psi$-rank, provided $g$ is small enough. We then define the unfolded differential rank of a pre-differential-valued field with asymptotic couples $(G,\psi)$ 
as 
the union of all the $\psi_g$-ranks of $G$ with $g$ ranging in $G^{<0}$. The unfolded differential rank contains the differential rank, and in general this inclusion is strict.
We show  that the unfolded differential rank is closely related to the notion of exponential rank, which makes it an interesting object. In fact, 
Corollary \ref{unfoldedrankisexponentialrank} says that we can view the unfolded differential rank as a sort of generalization of the exponential rank for pre-differential-valued fields which do not necessarily admit 
an exponential.

  It was shown in \cite[Corollary 5.9]{Kuhlmann} (respectively, in \cite[Corollary 5.6]{KuhlmannPointMatusinski}) that any linearly ordered set can be realized as the principal exponential rank 
  (respectively, the principal difference rank)
  of 
  an exponential field (respectively, a difference field). 
The second goal of this paper is to state a similar result for the differential case. 
In doing so, we were confronted with  the following question:

\begin{Question}\label{question1}
   Given an ordered abelian group $G$ and a field $k$ of characteristic $0$, under which conditions on $k$ and $G$ can we define a derivation on the field
  of generalized power series $k((G))$ making it a differential-valued field? a H-field?
  \end{Question}
 
 Such a  problem has already been 
  studied by the first author and Matusinski in \cite{KuhlmannMatusinski} and \cite{KuhlmannMatusinski2}. In \cite{KuhlmannMatusinski}, they
  considered a field $K$ of generalized power series whose value group $G$ is a Hahn product of copies of $\R$ over a given linear order 
  $\Phi$ (see Section \ref{powerseries} for the definition of Hahn product).  Assuming 
  that a derivation is already defined on the chain $\Phi$ of fundamental monomials, they gave conditions for this derivation to be extendable to the group $G$ of monomials and 
  to the whole field $K$. They then proceeded to give conditions for this derivation to be of Hardy type. The present paper extends the results of \cite{KuhlmannMatusinski}, giving a full answer 
  to Question \ref{question1} in the case where $D$ is required to be a $H$-derivation (see Section \ref{powerseries} for the definition of 
$H$-derivation) and constructing $D$ explicitly.
Our approach uses asymptotic couples. We will first answer the following question (see Section \ref{acsubsection} for the definition of H-type asymptotic couple):
  
  \begin{Question}\label{question}
    Let an $H$-type asymptotic couple $(G,\psi)$ and a field $k$ of characteristic $0$ be given. Under which condition on $(G,\psi)$ and $k$ can we define 
   a derivation $D$ on $K:=k((G))$ such that $(K,v,D)$ is a differential-valued field (or a H-field) 
   whose associated asymptotic couple is
   $(G,\psi)$?
   \end{Question}
 
   We give a full answer to Question \ref{question} in Section \ref{derivationfromacsection} (Theorem \ref{mainThm}). We then use 
  Theorem \ref{mainThm} to answer Question \ref{question1}. Our answer to Question \ref{question1} involves the notion of shift (see Section \ref{Hderivationsection} 
  for a definition of this notion). 
 A connection between Hardy-type derivations  and shifts was already established in \cite{KuhlmannMatusinski} and \cite{KuhlmannMatusinski2}, where the authors showed that 
 certain shifts on the value chain $\Gamma$ of a field of generalized power series $K$ can be lifted to a Hardy-type derivation on $K$.  We basically 
 show in this paper that any H-derivation on a field of power series comes from a shift satisfying certain properties 
(Theorems \ref{answertoquestion1} and \ref{HardytypeThm}).

  The paper is organized as follows. It starts with a preliminary section which sets the notation and recalls a few basic facts about 
   valuation theory, fields of generalized power series and quasi-orders. Section \ref{phiranksection} introduces the general notion of the $\phi$-rank of a valued
  field endowed with an operator $\phi$.  We show in Proposition \ref{fieldgroupchain} that, provided $\phi$ satisfies some reasonable condition, the $\phi$-rank of a valued field can be characterized at three levels: at the level
  of the field itself, 
  at the level of the value group and the level of the value chain of the value group. We also show that, if 
$\phi$ happens to be an increasing map of a quasi-ordered set, then it naturally induces a quasi-order which is helpful for the 
characterization of the $\phi$-rank (see Proposition \ref{phiqoprop}). This gives us a generalization of \cite[Corollary 5.4 and 5.5]{KuhlmannPointMatusinski}
(one can recover the results of \cite{KuhlmannPointMatusinski} by setting 
$A:=\Gamma$ and $\phi:=\sigma_{\Gamma}$ and applying Proposition \ref{phiqoprop}). 
Section \ref{acsubsection} is dedicated to the study of asymptotic couples. We introduce
the notion of cut point 
(see Definition \ref{cutpointdef}), and then use this notion to  describe the behavior of the map $\psi$ 
of an asymptotic couple.  The results of Section \ref{acsubsection} will be  useful for the study of the differential rank. 
Section \ref{differentialranksection} is dedicated to the study of the differential rank of a pre-differential-valued field. 
In Section \ref{diffranksubsec}, we apply Section \ref{phiranksection} 
to the particular case where $\phi$ is the logarithmic derivative of a pre-differential-valued field. Applying Proposition \ref{fieldgroupchain}, 
we obtain Theorem \ref{diffrankthreelevelsprop}, which states that the differential rank is characterized at the level of the asymptotic couple associated to the field. 
We then apply Proposition \ref{phiqoprop} to the case of H-field, and we obtain a characterization of the differential rank by a quasi-order induced by 
the logarithmic derivative (Theorem \ref{diffrankcharbyqosThm}). Finally, we characterize  the coarsenings $w$ of $v$ which lie in the 
differential rank of $(K,v,D)$ (Theorem \ref{characterizationofdiffrankThm}). In Section \ref{unfrksection}, we introduce the notion of unfolded differential rank. 
We characterize the coarsenings $w$ of $v$ which lie in the unfolded differential rank of $(K,v,D)$ (Theorem \ref{charofunfrk}) and show that 
the unfolded differential rank coincides with the exponential rank if 
$K$ is equipped with an exponential (Corollary \ref{unfoldedrankisexponentialrank}). Section \ref{powerseries} is dedicated to the construction of  a derivation on a field of generalized power series. 
In Section \ref{derivationfromacsection}, we show a method to construct a derivation on a field $k((G))$ of generalized power series from a H-type asymptotic couple $(G,\psi)$.
We answer Question \ref{question} in Theorem \ref{mainThm}, giving a necessary and sufficient condition on $(G,\psi)$ for the existence of a derivation on 
the field of power series $K:=k((G))$ that makes $K$ a differential-valued field of asymptotic couple $(G,\psi)$. We actually define the derivation explicitly. 
We then answer Question \ref{question1} for the case where $D$ is required to be a H-derivation in Section \ref{Hderivationsection} (Theorem \ref{answertoquestion1}).
We characterize the existence of a H-derivation on $K$ by the existence 
of a certain shift on the value chain $\Gamma$ of $K$.
 In Section 
\ref{Hardytypesection} we answer a variant of Question \ref{question1} where $D$ is required to be Hardy-type, thus 
relating our work to the work done in \cite{KuhlmannMatusinski} and \cite{KuhlmannMatusinski2}. 
Finally, in Section \ref{rankrealizedsection}, we show that for any pair of linearly ordered sets $(P,R)$ where $R$ is a principal final segment of $P$, there exists 
a field of generalized power series $K$ with a Hardy-type derivation making $K$ a H-field of principal differential rank $R$ and of principal unfolded differential rank $P$ (Theorem \ref{rankrealizedThm}).


 
 \section{Preliminaries}\label{preliminarysection}
    For us, a \textbf{partial map} on a set $A$ is a map from some subset $B$ of $A$ to $A$. 
    The domain of a partial map $\phi$ is denoted by $\dom\phi$. The identity map of a set will always be denoted by $id$.

    Every group considered in this paper is abelian. 
We recall that a \textbf{valued  group} is a group $G$ together with a map 
    $v:G\to\Gamma\cup\{\infty\}$, where $\Gamma\cup\{\infty\}$ is a totally ordered set with maximum 
    $\infty$ and such that for any $g,h\in G$, $v(g)=\infty\Leftrightarrow g=0$, $v(g)=v(-g)$ and 
    $v(g+h)\geq\min(v(g),v(h))$. $\Gamma$ is called the \textbf{value chain} of the valued group $(G,v)$. Note that for any $g,h\in G$, 
    $v(g-h)>v(g)$ implies $v(g)=v(h)$; this remark will be used a lot in this paper. 

If $(G,\leq)$ is an ordered abelian group, we use the notations $G^{\neq0}$ and $G^{<0}$ to respectively denote 
    $\{g\in G\mid g\neq0\}$ and $\{g\in G\mid g<0\}$. We say that two elements $g,h$ of the ordered group $(G,\leq)$ are \textbf{archimedean-equivalent} if 
   there are $n,m\in\N$ such that $\vert g\vert\leq n\vert h\vert$ and $\vert h\vert\leq m\vert g\vert$. We say that the ordered group $(G,\leq)$ is \textbf{archimedean} if any two non-zero elements of $G$ are 
   archimedean-equivalent. It is well-known that 
an ordered group is archimedean if and only if it is embeddable into $(\R,+,\leq)$, where $\leq$ denotes the usual order of $\R$ (see for example \cite{Fuchs}). 
We recall that, if $H$ is a convex subgroup of $(G,\leq)$, then $\leq$ naturally induces an order on the quotient group $G/H$ (see \cite{Fuchs}). We also recall that ordered abelian groups
can be seen as a particular examples of valued groups:
    if $(G,\leq)$ is an ordered abelian group, then we define its \textbf{natural valuation}, which we usually denote by $v_G$, as the valuation 
    given by the archimedean relation on $G$, i.e $v_G(g)\leq v_G(h)\Leftrightarrow \exists n\in\N, \vert h\vert\leq n\vert g\vert$.
 The set of non-trivial
   convex subgroups of $(G,\leq)$ is totally ordered by inclusion, its order-type is called the \textbf{rank} of the ordered group $(G,\leq)$. The rank of $(G,\leq)$ is 
   order-isomorphic to the set of final segments of $\Gamma$ via the map $H\mapsto v_G(H\backslash\{0\})$. 

    A \textbf{valued field}  is a a field $K$ endowed with a map $v:K \to G\cup\{\infty\}$ such that  $(K,+,v)$ is a valued group, $G$ is an ordered abelian group 
 and $v(xy)=v(x)+v(y)$ for all $x,y\neq0$.  $G$ is called the value group of $(K,v)$.
 We will implicitly consider ordered fields as particular instances of valued fields by considering the natural valuation associated to the given order.
    We use the following notation: $\valuedfield$ to mean that $(K,v)$ is a valued field with value group $G$, $v_G$ is the natural 
valuation of the ordered group $G$ and $\Gamma$ is the value chain of $(G,v_G)$. For a given valuation $v$ on a field $K$, we denote by 
$\ringv:=\{x\in K\mid v(x)\geq0\}$ its valuation ring, by $\idealv:=\{x\in K\mid v(x)>0\}$ the maximal ideal of $\ringv$ and by 
$\unitsv:=\{x\in K\mid v(x)=0\}$ the group of units of $\ringv$. We recall that a field valuation is entirely determined by its valuation ring; 
we will often assimilate a valuation with its associated ring. If $v$ and $w$ are two valuations on a field $K$, we say that 
$w$ is a \textbf{coarsening} of $v$  if $\ringv\subseteq\ringw$. We say that $w$ is a \textbf{strict coarsening} of $v$ if moreover $v\neq w$.
 The set of all strict coarsenings of a given valuation $v$ is totally ordered by inclusion, and its order type is 
what we call the \textbf{rank of the valued field $(K,v)$}.  We recall that the map 
  $\ringw\mapsto G_w:=v(\unitsw)$ defines a bijection from the set of strict coarsenings of $v$ to the set 
of non-trivial convex subgroups of $(G,\leq)$. Similarly, the map $G_w\mapsto\Gamma_w:=v_G(G_w^{\neq0})$ defines a bijection from the set of non-trivial convex
subgroups of $G$ to the set of final segments 
of $\Gamma$. It follows that 
 the rank of a valued field $\valuedfield$ has three equivalent characterizations: 
the first one is the order type of the set of all coarsenings of $v$; the second one is the order type of the set of all 
convex subgroups of $G$; finally, the third one is the order type of the set of all final segments of $\Gamma$.
We recall that if $w$ is a coarsening of $v$, then $v$ induces a valuation on the residue field $Kw$ which we denote by $\quotientval$. 
It is the valuation whose valuation ring is the image of $\ringv$ under the canonical homomorphism $\ringw\to Kw$. We also recall that, if $(K,\leq)$ is an ordered field and 
$w$ a coarsening of its natural valuation, then $\leq$ induces a field order on $Kw$ which we denote by $\leq_w$ 
and defined by $0+\idealw<_wa+\idealw\Leftrightarrow (0< a\wedge a\notin\idealw)$.

One way of constructing valued fields is to construct fields of generalized power series. 
Given a field $k$ and an ordered abelian group $G$, we define the field of power series $k((G))$ as the field 
$k((G)):=\{a=(a_g)_{g\in G}\in k^G\mid\text{ $\supp(a)$ is well-ordered}\}$  (where $\supp(a)$ denotes the support of $a$, i.e 
$\supp(a)=\{g\in G\mid a_g\neq0\}$)  endowed with component-wise addition and a multiplication 
defined as $(a_g)_{g\in G}(b_g)_{g\in G}=(c_g)_{g\in G}$ where $c_g=\sum_{h\in G}a_hb_{g-h}$. An element of $K$ is written as a formal sum $a:=\sum_{g\in G}a_gt^g$ with $a_g\in k$ for all $g\in G$. 
This field is naturally endowed with 
a valuation $v(a):=\min\supp(a)$. If $g=v(a)$, we say that $a_g$ is the \textbf{leading coefficient} and $a_gt^g$ the \textbf{leading term} of $a$.

 Finally, we want to recall some basic facts about quasi-orders, since Section \ref{phiranksection} makes use of them. 
A \textbf{quasi-order} (q.o) is a binary relation which is reflexive and transitive. All q.o's in this paper are assumed to be total, i.e they satisfy the condition 
$a\precsim b\vee b\precsim a$ for all $a,b$.
 A q.o $\precsim$ on a set $A$ naturally induces an equivalence
    relation which we denote by $\sim$ and defined as 
    $a\sim b\Leftrightarrow a\precsim b\precsim a$. The q.o $\precsim$ then naturally induces an order on the quotient 
$A/\sim$ defined as follows: we say that the $\sim$-class of $a$ is smaller than the $\sim$-class of $b$ if and only if 
$a\precsim b$ (one can easily check that this defines a total order on $A/\sim$). If $\precsim_1$ and $\precsim_2$ are two q.o's on $A$, we say that $\precsim_2$ is a \textbf{coarsening} of 
$\precsim_1$ if $a\precsim_1 b\Rightarrow a\precsim_2b$ for all $a,b\in A$. 
If $(A,\precsim)$ is a q.o set and $\phi:A\to A$ a map, we say that 
$\phi$ is \textbf{increasing} if $a\precsim b\Rightarrow\phi(a)\precsim\phi(b)$, and we say that $\phi$ is \textbf{strictly increasing} if $a\precsim b\Leftrightarrow\phi(a)\precsim\phi(b)$ for all $a,b\in A$.
     If  $B\subseteq A$, we say that $B$ is
\textbf{$\precsim$-convex} in $A$, or is convex in $(A,\precsim)$, if for any $a\in A$ and $b,c\in B$, $b\precsim a\precsim c$ implies 
$a\in B$. We say that $B$ is a a \textbf{final segment} of $(A,\precsim)$ if for any 
$a\in A$ and $b\in B$, $b\precsim a$ implies $a\in B$.  A final segment of $(A,\precsim)$ is said to be \textbf{principal} if it is of the form 
$B:=\{a\in A\mid b\precsim a\}$ for some $b\in A$. $B$ is then called the \textbf{principal final segment of $(A,\precsim)$ generated by $b$}. Note that orders and valuations are both particular instances of 
quasi-orders: indeed,  if $v$ is a valuation on a group $G$, then 
$v$ induces a quasi-order, which we denote by $\precsim_v$, defined by $g\precsim_v h\Leftrightarrow v(g)\geq v(h)$. 
\comment{It was proved in \cite{KuhlmannPointMatusinski} that a valuation $w$ on $K$ is a coarsening of $v$ if and only if the ring $\ringw$ is 
$\precsim_v$-convex. in particular, the set of coarsenings of $v$ can be identified with the set of 
$\precsim_v$-convex subrings of $K$; we implicitly use this fact in Section 1.
Finally, we recall that a quasi-ordered field is a field $K$ endowed with a q.o $\precsim$ which is 
either a field order or the quasi-order induced by a valuation.}

 \section{The $\phi$-rank of a valued field}\label{phiranksection}
 
 We now give a uniform approach to the notion of rank for a valued field with an operator, generalizing the already 
 existing notions of rank found in \cite{Kuhlmann} and \cite{KuhlmannPointMatusinski}. This will allow us to define the differential rank in Section \ref{diffranksubsec}. 
 The classical rank of a valued field $\valuedfield$ is characterized on three different levels: 
 at the level of the field $K$ itself, at the level of its value group $G$ and at the level of the valuation chain $\Gamma$. This is 
 why we now want to define three notions of $\phi$-ranks: one for quasi-ordered  sets, one for  groups and another one for 
 fields. Using quasi-orders instead of orders will be useful to give a certain characterization of the differential rank, see Theorem \ref{diffrankcharbyqosThm}.
 
 We first define the notion of rank of a quasi-ordered set. Let $(A,\precsim)$ be a q.o set. The \textbf{rank of the q.o set $(A,\precsim)$}
 is the order type of the set of all final segments of $(A,\precsim)$, ordered by inclusion. 
The \textbf{principal rank of the q.o set $(A,\precsim)$} is the order type of the set of principal final segments of $(A,\precsim)$. Note that the rank of 
$(A,\precsim)$ is the same as the rank of $(A/\sim,\leq)$, where $\leq$ is the order induced by $\precsim$ on $A/\sim$. Note also that the principal rank of $(A,\precsim)$ is isomorphic to 
 $(A/\sim,\leq^{\ast})$, where $\leq^{\ast}$ is the reverse order of $\leq$ (this is given by the order-reversing bijection $a\mapsto\{b\in A\mid a\precsim b\}$ from $A/\sim$ to 
 the set of principal final segments of $A$).
 \begin{Rem}\label{ordercompletionrem}
  A final segment is an increasing union of principal final segments. Therefore, the 
  rank is completely determined up to isomorphism by the principal rank (see \cite[Remark 2.13]{KuhlmannPointMatusinski}). Note that 
  the rank of a totally ordered set (as defined above) is in fact its order completion (see \cite[Definition 2.31]{Rosenstein}). \end{Rem}

Now let $\phi$ be a partial map on a set $A$. 
 We say that a subset $B$ of $A$ is \textbf{compatible with 
$\phi$}, or \textbf{$\phi$-compatible}, if for any $a\in A\cap\dom\phi$, $a\in B \Leftrightarrow\phi(a)\in B$. If $(A,\precsim)$ is a quasi-ordered set, 
$\phi$ a partial map on $A$ and $b\in A$, we say that 
$B\subseteq A$ is the \textbf{$\phi$-principal final segment of $(A,\precsim)$ generated by $b$} if  $B$ is the smallest $\phi$-compatible final 
segment of $(A,\precsim)$ containing $b$. We say that a final segment 
$B$ of $(A,\precsim)$ is \textbf{$\phi$-principal} if there is some $b\in A$ such that $B$ is $\phi$-principal generated by $b$.
\begin{Def}\label{phirankdef}
 \begin{enumerate}
  \item The  \textbf{$\phi$-rank} (respectively, the \textbf{principal $\phi$-rank}) of the quasi-ordered set $(A,\precsim)$ is the order type of the set of 
 $\phi$-compatible (respectively, $\phi$-principal) final segments of $(A,\precsim)$, ordered by inclusion.
 \item If $(G,\leq)$ is an ordered abelian group with a partial map $\phi$, we say that $H$ is  \textbf{the $\phi$-principal convex subgroup of $G$ generated by $g$} if $H$ is the smallest 
$\phi$-compatible convex subgroup of $G$ containing $g$.
 We  define 
the  \textbf{$\phi$-rank} (respectively, the \textbf{principal $\phi$-rank}) of the ordered group $(G,\leq)$ as the order type of the set of 
 $\leq$-convex $\phi$-compatible (respectively, $\phi$-principal) non-trivial subgroups of $G$.
 \item If $(K,v)$ is a valued field with a partial map $\phi$, we  say that a coarsening $w$ 
of $v$ is \textbf{$\phi$-compatible} if 
$\unitsw$ is $\phi$-compatible as a set. We say that $w$ is  \textbf{the $\phi$-principal coarsening of $v$ generated by $a$} 
if $\ringw$ is the smallest overring of $\ringv$ containing $a$ such that $w$ is $\phi$-compatible. We
define 
the  \textbf{$\phi$-rank} (respectively, the \textbf{principal $\phi$-rank}) of the valued field $(K,v)$ as the order type of the set of 
  $\phi$-compatible (respectively, $\phi$-principal) strict coarsenings of $v$. 
 \end{enumerate}

\end{Def}

\begin{Ex}\label{exemplesrang}
\begin{enumerate}[(a)]
 \item The rank of a q.o set $(A,\precsim)$ is equal to its $id$-rank.
 \item Let $\valuedfield$ be a valued field. Then the classical rank of $(K,v)$ as a valued field is 
 the $id$-rank of the valued field $(K,v)$. It is known that it is also equal to the $id$-rank of the ordered group $(G,\leq)$
  and to the $id$-rank of the ordered set $(\Gamma,\leq)$.
 \item Let $(K,\leq,\exp)$ be an ordered field endowed with   
  a $(GA),(T_1)$-logarithm as defined in \cite[Chapter 2, Section 1]{Kuhlmann}. Let $\phi$ be the logarithm $\exp^{-1}$ restricted 
 to $K^{>0}\backslash\ringv$, where $v$ is the natural valuation associated to $\leq$. One can easily check that our notion of compatibility coincides with the notion of compatibility defined in 
\cite[Chapter 3, Section 2]{Kuhlmann}. In particular, a coarsening $w$ of $v$ satisfies our definition of compatibility with $\phi$ if and only if 
 the logarithm is compatible with $w$ in the sense of \cite{Kuhlmann}. It follows that the
 exponential rank of $(K,\leq,\exp)$ is the
 $\phi$-rank of the valued field $(K,v)$.
 \item Let $(K,v,\sigma)$ be a valued difference field. Then one can check that a coarsening $w$ of $v$ satisfies our condition of 
 $\sigma$-compatibility if and only if $w$ is $\sigma$-compatible in the sense of \cite[Section 4]{KuhlmannPointMatusinski}. It follows that the 
difference rank of $(K,v,\sigma)$ defined in \cite[Section 4]{KuhlmannPointMatusinski} is equal to the $\sigma$-rank of the valued field $(K,v)$.
 \item The first author showed in \cite[Theorem 3.25]{Kuhlmann} that the exponential rank of 
 $(K,\leq,\exp)$ is also equal to the $\chi$-rank of $(G,\leq)$, where $\chi$ is the map 
 induced by the logarithm on the value group $G$ of $(K,v)$. She also showed that it is equal to the $\zeta$-rank of 
 $(\Gamma,\leq)$, where $\zeta$ is the map induced by $\chi$ on $\Gamma$. She together with Point and Matusinski also showed 
 similar results for difference fields in 
 \cite[Lemmas 4.5 and 4.6]{KuhlmannPointMatusinski}.
\end{enumerate}
\end{Ex}

Following Example \ref{exemplesrang} (e),
we now want to show that the $\phi$-rank of a valued field with an operator $\phi$ can be 
characterized at three different levels, as happens in the classical case. This can only be done if 
$\phi$ induces a map on its value group, which is why we need the following definition:
\begin{Def}\label{definducedmap}
We say that a map $\phi$ is \textbf{consistent} with a valuation $v$ if  $v(a)=v(b)\Rightarrow \phi(v(a))=\phi(v(b))$ for all $a,b\in\dom(\phi)$.
Let $\valuedfield$ be a valued field and $\phi$ a partial map on $K$. If $\phi$ is consistent with $v$, then 
$\phi$ naturally induces a partial map on $G$ defined by  
$\phi_G(v(a))=v(\phi(a))$. If $\phi_G$ is itself consistent with 
$v_G$, then it induces a partial map $\phi_{\Gamma}$ on $\Gamma$ defined by
 $\phi_{\Gamma}(v_G(g))=v_G(\phi_G(g))$.
\end{Def}

We then have the following result:

\begin{Prop}\label{fieldgroupchain}

 Let $\valuedfield$ be a valued field and $\phi$ a partial map on $K$. Assume that 
$\phi$ is consistent with $v$.
Then the $\phi$-rank (respectively, the principal $\phi$-rank ) of $(K,v)$ is equal to the $\phi_G$-rank (respectively, the principal $\phi_G$-rank) of 
$(G,\leq)$, where $\phi_G$ is the partial map of $G$ induced by $\phi$. If moreover 
$\phi_G$ is consistent with $v_G$, then 
the $\phi$-rank (respectively, the principal $\phi$-rank ) of $(K,v)$ is also equal to the $\phi_{\Gamma}$-rank (respectively, the principal $\phi_{\Gamma}$-rank) of 
$(\Gamma,\leq)$, where $\phi_{\Gamma}$ denotes the partial map of $\Gamma$ induced by $\phi_G$.

\end{Prop}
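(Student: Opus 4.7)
The plan is to exploit two standard order-preserving bijections recalled in Section~\ref{preliminarysection}: first, $w\mapsto G_w:=v(\unitsw)$ between strict coarsenings of $v$ and non-trivial convex subgroups of $(G,\leq)$; and second, $H\mapsto \Gamma_H:=v_G(H\setminus\{0\})$ between non-trivial convex subgroups of $G$ and final segments of $\Gamma$. Both bijections respect inclusion, so it will suffice to check that each level-switch respects $\phi$-compatibility and ``$\phi$-principal generated by $x$'' (with $x$ suitably translated between levels). Once this is done, the equalities of $\phi$-ranks and principal $\phi$-ranks follow by transporting the defining order types along the bijections.

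For the first equivalence, fix a strict coarsening $w$ of $v$ and its associated convex subgroup $G_w$. The membership criterion $a\in\unitsw\Leftrightarrow v(a)\in G_w$ and the defining identity $v(\phi(a))=\phi_G(v(a))$ (valid because $\phi$ is consistent with $v$, so $\phi_G$ is well-defined on $\dom\phi_G=v(\dom\phi)$) turn the condition ``$a\in\unitsw\Leftrightarrow\phi(a)\in\unitsw$ for all $a\in K\cap\dom\phi$'' into ``$g\in G_w\Leftrightarrow\phi_G(g)\in G_w$ for all $g\in G\cap\dom\phi_G$''. This gives the equivalence of $\phi$-compatibility of $w$ and $\phi_G$-compatibility of $G_w$. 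For the principal versions, recall that $a\in\ringw$ amounts, via the convexity of $G_w$, to either $v(a)\geq 0$ or $-v(a)\in G_w$; so requiring $\ringw$ to be the smallest $\phi$-compatible overring of $\ringv$ containing $a$ translates, under the bijection, into requiring $G_w$ to be the smallest $\phi_G$-compatible convex subgroup of $G$ containing $-v(a)$ (or, trivially, $0$, if $v(a)\geq 0$). Here it is important that arbitrary intersections of $\phi_G$-compatible convex subgroups are again $\phi_G$-compatible convex, so that the notion of ``smallest'' makes sense; the analogous remark holds on the field side. This identifies the principal $\phi$-rank of $(K,v)$ with the principal $\phi_G$-rank of $(G,\leq)$.

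Under the additional hypothesis that $\phi_G$ is consistent with $v_G$, the same pattern is applied again, one level further down. For a non-trivial convex subgroup $H$ of $G$ and its final segment $\Gamma_H=v_G(H\setminus\{0\})$, a non-zero $g\in G$ lies in $H$ iff $v_G(g)\in\Gamma_H$, while $v_G(\phi_G(g))=\phi_\Gamma(v_G(g))$. Hence $\phi_G$-compatibility of $H$ is equivalent to $\phi_\Gamma$-compatibility of $\Gamma_H$, and smallest $\phi_G$-compatible convex subgroup containing $g\neq 0$ corresponds to smallest $\phi_\Gamma$-compatible final segment of $\Gamma$ containing $v_G(g)$, giving the equality of (principal) $\phi_G$-rank with (principal) $\phi_\Gamma$-rank.

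The main obstacle I anticipate is bookkeeping around the principal case: the ``smallest $\phi$-compatible object containing $x$'' exists only because the class of $\phi$-compatible convex subgroups (resp.\ overrings, final segments) is closed under arbitrary intersection, a point worth verifying once and reusing. A related subtlety is the sign convention when translating between ``$a\in\ringw$'' and ``$v(a)\in G_w$'', where one must pass through $|v(a)|$ because $\ringw$ is not symmetric under $x\mapsto x^{-1}$; but since $G_w$ is convex and symmetric around $0$, this causes no essential difficulty and is only a matter of care in choosing the generator on each side.
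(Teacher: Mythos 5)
Your proposal is correct and follows essentially the same route as the paper: both transport $\phi$-compatibility and $\phi$-principality along the standard inclusion-preserving bijections $w\mapsto G_w=v(\unitsw)$ and $H\mapsto v_G(H\setminus\{0\})$, using $v(\phi(a))=\phi_G(v(a))$ and $v_G(\phi_G(g))=\phi_\Gamma(v_G(g))$ to switch levels. The only difference is that you spell out details the paper leaves as "easy to see" (closure of compatible objects under intersection so that "smallest" is well-defined, and the choice of generator when passing from $a\in\ringw$ to $v(a)\in G_w$), which is harmless.
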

\begin{proof}
 We already know (see \cite[Lemma 3.4]{Kuhlmann}) that there is an inclusion-preserving bijection $\ringw\mapsto G_w:=v(\unitsw)$ between the set 
 of coarsenings of $v$ and the set of convex subgroups of $G$. 
 Let $\ringw$ be a coarsening of $v$  and $G_w$ the corresponding subgroup of $G$. Assume $\ringw$
 is $\phi$-compatible, let $0\neq g\in G$, $g=v(a)$. $g\in G_w\Leftrightarrow a\in \unitsw\Leftrightarrow\phi(a)\in \unitsw\Leftrightarrow \phi_G(g)=v(\phi(a))\in G_w$, 
so $G_w$ is $\phi_G$-compatible. Assume 
 $G_w$ is $\phi_G$-compatible. We have $a\in \unitsw\Leftrightarrow g=v(a)\in G_w\Leftrightarrow
 \phi_G(g)\in G_w\Leftrightarrow v(\phi(a))\in G_w\Leftrightarrow \phi(a)\in \unitsw$, so 
 $\ringw$ is $\phi$-compatible.  
 Therefore, there is an inclusion-preserving bijection between the set of $\phi$-compatible coarsenings of $v$ and 
 the set of $\phi_G$-compatible convex subgroups of $G$. 
 It also easy to see that a coarsening $\ringw$ is $\phi$-principal generated by $a$ if and only if $G_w$ is $\phi_G$-principal generated by $g:=v(a)$. Thus, we also have an 
 inclusion-preserving bijection between the set of $\phi$-principal coarsenings of $v$ and the set of $\phi_G$-principal convex subgroups of $G$. This proves the first claim of the proposition.
We could use an analogous proof to show that there is a bijection 
 between $\phi_G$-compatible convex subgroups of $G$ and $\phi_{\Gamma}$-compatible final segments of $\Gamma$ (using the map $G_w\mapsto v_G(G_w\backslash\{0\})$).
\end{proof}

\begin{Ex}
 \begin{itemize}
  \item Consider an ordered exponential field $(K,\leq,\exp)$ satisfying 
         axioms $(GA),(T_1)$ of \cite[Chapter 2, Section 1]{Kuhlmann} and denote by $v$ its natural valuation. Define $\phi$ as the logarithm 
$\log:=\exp^{-1}$ restricted to $K^{>0}\backslash\ringv$. One easily  sees that 
$\phi$ is consistent with $v$. The induced map on $G$ is the contraction map 
$\chi$ studied in \cite[Chapter 2, Section 7]{Kuhlmann}.  It then follows from Proposition \ref{fieldgroupchain} that the exponential rank 
of $K$ is equal to the $\chi$-rank of $G$ as stated in \cite[Theorem 3.25]{Kuhlmann}.
\item Consider a difference valued field $(K,v,\sigma)$, where $\sigma$ is compatible with $v$. One can check that $\sigma$ is consistent with $v$, 
so the difference rank of $K$ is equal to the $\sigma_G$-rank of  $G$.
 \end{itemize}

\end{Ex}

 In \cite[Section 5]{KuhlmannPointMatusinski}, the authors characterized the difference rank of a difference field in terms of an equivalence relation 
induced by $\sigma$. We now give similar results for the notion of $\phi$-rank in the case where $\phi$ is an increasing map. Assume then that 
$(A,\precsim)$ is a quasi-ordered set and that  
 $\phi$ is an increasing  map on $A$  with $\dom\phi=A$.

 We associate the following relations to $\phi$:
 
$a\qophi b\Leftrightarrow \exists n,k\in\N_0\quad \phi^n(a)\precsim\phi^k(b)$. 

$a\eqrel b\Leftrightarrow a\qophi b\wedge b\qophi a$.

\begin{Prop}\label{phiqoprop}
 The relation $\qophi$ is a quasi-order on $A$ and a coarsening of $\precsim$. Moreover, For any $\precsim$-convex subset $B$ of $A$ the following statements are equivalent: 

 \begin{enumerate}[(i)]
  \item $B$ is $\phi$-compatible 
  \item $B$ is $\qophi$-convex.
  \item  $B$ is $\eqrel$-closed, in the sense that for any $a,b\in A$ with $a\eqrel b$, $a\in B\Leftrightarrow b\in B$.
 \end{enumerate}
 In particular, the $\phi$-rank of $(A,\precsim)$ is equal to the rank of $(A,\qophi)$ and the principal $\phi$-rank of $(A,\precsim)$ is equal to the principal
rank of $(A,\qophi)$.

\end{Prop}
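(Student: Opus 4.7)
The plan is to first verify the purely formal claims about $\qophi$, then prove the cycle of equivalences $(i)\Rightarrow(ii)\Rightarrow(iii)\Rightarrow(i)$, and finally read off the consequence for ranks from the fact that $\qophi$-final segments are precisely the $\phi$-compatible $\precsim$-final segments.

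First, I would check that $\qophi$ is a quasi-order: reflexivity is immediate by taking $n=k=0$, and for transitivity, given $\phi^n(a)\precsim\phi^k(b)$ and $\phi^m(b)\precsim\phi^l(c)$, apply $\phi^m$ to the first (using that $\phi$ is increasing) and $\phi^k$ to the second to obtain $\phi^{n+m}(a)\precsim\phi^{k+m}(b)\precsim\phi^{k+l}(c)$. That $\qophi$ is total and coarsens $\precsim$ is immediate (take $n=k=0$ again).

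Now fix a $\precsim$-convex subset $B$. For $(i)\Rightarrow(ii)$, suppose $B$ is $\phi$-compatible and take $b,c\in B$ and $a\in A$ with $b\qophi a\qophi c$. Write $\phi^n(b)\precsim\phi^k(a)$ and $\phi^m(a)\precsim\phi^l(c)$. The only point needing care is to align the two inequalities around a common iterate of $a$: set $N:=\max(k,m)$ and apply $\phi^{N-k}$, resp.\ $\phi^{N-m}$, to obtain $\phi^{n+N-k}(b)\precsim\phi^N(a)\precsim\phi^{l+N-m}(c)$. Iterating $\phi$-compatibility gives $\phi^{n+N-k}(b),\phi^{l+N-m}(c)\in B$, so $\precsim$-convexity yields $\phi^N(a)\in B$, and then $\phi$-compatibility (in reverse, applied $N$ times) gives $a\in B$. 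For $(ii)\Rightarrow(iii)$: if $a\eqrel b$ and $a\in B$, then $a\qophi b\qophi a$ with both outer elements in $B$, so $\qophi$-convexity yields $b\in B$. For $(iii)\Rightarrow(i)$, the key observation is that $a\eqrel\phi(a)$ for every $a$, since $\phi^1(a)\precsim\phi^0(\phi(a))$ and $\phi^0(\phi(a))\precsim\phi^1(a)$; so $\eqrel$-closedness immediately delivers $a\in B\Leftrightarrow\phi(a)\in B$.

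For the final clause about ranks, I would show that the $\phi$-compatible final segments of $(A,\precsim)$ are exactly the final segments of $(A,\qophi)$. If $F$ is a final segment of $(A,\qophi)$, then because $\qophi$ coarsens $\precsim$, $F$ is a $\precsim$-final segment, and it is trivially $\qophi$-convex, so by $(ii)\Rightarrow(i)$ it is $\phi$-compatible. Conversely, if $F$ is a $\phi$-compatible $\precsim$-final segment and $b\in F$, $b\qophi a$, then $\phi^n(b)\precsim\phi^k(a)$; $\phi$-compatibility puts $\phi^n(b)$ in $F$, the $\precsim$-final segment property puts $\phi^k(a)$ in $F$, and $\phi$-compatibility again gives $a\in F$. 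This inclusion-preserving bijection identifies the $\phi$-rank of $(A,\precsim)$ with the rank of $(A,\qophi)$; the same bijection sends the $\phi$-principal final segment generated by $b$ to the principal $\qophi$-final segment generated by $b$, which gives the principal rank statement.

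The main obstacle, though minor, is the bookkeeping in $(i)\Rightarrow(ii)$: one must synchronize the exponents $n,k,m,l$ around a common iterate $\phi^N(a)$ before invoking $\precsim$-convexity, and then use $\phi$-compatibility twice (once to place the outer elements $\phi^{n+N-k}(b),\phi^{l+N-m}(c)$ inside $B$, once to descend from $\phi^N(a)\in B$ back down to $a\in B$). Everything else is essentially formal.
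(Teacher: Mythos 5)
Your proof is correct and follows essentially the same route as the paper's: the same cycle of implications, the same synchronization of iterates of $\phi$ (your $N=\max(k,m)$ trick plays the role of the paper's preliminary Claim), and the same identification of $\phi$-compatible $\precsim$-final segments with $\qophi$-final segments. The only differences are cosmetic streamlinings (e.g.\ your transitivity argument avoids the paper's case distinction on $l$ versus $m$).
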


\begin{proof}
We start by showing the following: \\

  \begin{Claim*}
    If $a\qophi b$ and $b\qophi c$, then there are $j,k,n\in\N_0$ with $\phi^k(a)\precsim \phi^j(b)\precsim\phi^n(c)$. 
  \end{Claim*}
  \begin{proof}
   By definition of $\qophi$ there are $k,l,m,n\in\N_0$ with $\phi^k(a)\precsim\phi^l(b)$ and $\phi^m(b)\precsim\phi^n(c)$. Assume that 
$l\leq m$. Since $\phi$ is increasing, $\phi^k(a)\precsim\phi^l(b)$ implies \newline$\phi^{k+m-l}(a)\precsim\phi^m(b)\precsim\phi^n(c)$, hence 
the claim. If $m<l$, then $\phi^m(b)\precsim\phi^n(c)$ implies $\phi^k(a)\precsim \phi^l(b)\precsim\phi^{n+l-m}(c)$.
\end{proof}

 Now we prove the proposition. Obviously, $\qophi$ is reflexive and total because $\precsim$ is, and thanks to the claim $\qophi$ is also transitive, so $\qophi$ is a quasi-order. Note that $\eqrel$ is the equivalence relation 
induced by the q.o $\qophi$.
Now let $B$ be $\precsim$-convex. By definition of convexity, (ii) implies (iii). 
Since $a\eqrel \phi(a)$ is true for all $a$, (iii) implies (i). Now let us prove that (i) implies (ii), so assume $B$ is $\phi$-compatible. Let $a,c\in B$ and $b\in A$ such that $a\qophi b\qophi c$. By the claim, there are 
$j,k,n\in\N_0$ with $\phi^k(a)\precsim \phi^j(b)\precsim\phi^n(c)$. Since $B$ is $\phi$-compatible we have $\phi^k(a),\phi^n(c)\in B$. Since $B$ is 
$\precsim$-convex, $\phi^k(a)\precsim \phi^j(b)\precsim\phi^n(c)$ implies $\phi^j(b)\in B$. By $\phi$-compatibility, this implies $b\in B$. 
 This shows that $B$ is $\qophi$-convex. 

Now let us prove the last statement. We show that a subset $B$ of $A$ is a  final segment of $(A,\qophi)$ if and only if it is a $\phi$-compatible 
 final segment of $(A,\precsim)$. Let $B$ be a $\phi$-compatible final segment of 
  $(A,\precsim)$. Let $b\in B$ and $a\in A$ with $b\qophi a$. If $b\eqrel a$ then since (i) implies (iii) we have $a\in B$. If $b\precnsim_{\phi}a$, then since $\qophi$ is a coarsening of $\precsim$ we must have 
$b\precnsim a$. Since $B$ is a final segment of $(A,\precsim)$, it follows that $a\in B$. This shows that $B$ is a final segment of $(A,\qophi)$. 
 Conversely, assume that $B$ is a final segment of 
$(A,\qophi)$. Since $\qophi$ is a coarsening of $\precsim$ then $B$ must also be a final segment of $(A,\precsim)$. 
 In particular, $B$ is $\precsim$-convex and 
$\qophi$-convex. Since (ii) implies (i), 
$B$ must be $\phi$-compatible. 
This shows that the set of $\phi$-compatible final segments of $(A,\precsim)$ coincides with the set of 
final segments of $(A,\qophi)$. It then immediately follows that the 
set of $\phi$-principal final segments of $(A,\precsim)$ coincides with the set 
of principal final segments of $(A,\qophi)$.
\end{proof}

\begin{Rem}\label{qoremark}
 By applying Propositions \ref{fieldgroupchain} and \ref{phiqoprop} to the case of exponential fields and to the case of difference fields, we recover 
 \cite[Theorem 3.30 and Corollary 3.34]{Kuhlmann} and \cite[Corollaries 5.4 and 5.5]{KuhlmannPointMatusinski}. Our relation $\eqrel$ corresponds to 
 $\sim_{\sigma_{\Gamma}}$ in \cite[Section 5]{KuhlmannPointMatusinski} 
and to $\sim_{\zeta}$ in \cite[Chapter 3, Section 4]{Kuhlmann}.
\end{Rem}

\section{Asymptotic Couples}\label{acsubsection}

  This section is dedicated to the study of asymptotic couples. We introduce a notion of cut point which allows us to describe the structure 
  of an asymptotic couple. 
  Asymptotic couples naturally arise as the value group of differential-valued fields, which is why 
  the results of this section will be useful in Section \ref{differentialranksection} to study the differential rank of a differential-valued field.
  
  We recall that an \textbf{asymptotic couple} is a pair 
$(G,\psi)$ consisting of an ordered abelian group $G$ and a map $\psi: G^{\neq0}\to G$ satisfying the following conditions: 
 \begin{enumerate}
   \item[(AC1)] $\psi(g+h)\geq\min(\psi(g),\psi(h))$ for any $g,h$.
   \item[(AC2)] $\psi(ng)=\psi(g)$ for any $g\in G$, $n\in\N$.
   \item[(AC3)] $\psi(g)<\psi(h)+\vert h\vert$ for any $g,h\in G^{\neq0}$.
  \end{enumerate}
  We say that $(G,\psi)$ is a \textbf{H-type asymptotic couple} if moreover the following condition is satisfied: 
  \begin{enumerate}
   \item[(ACH)]$\forall g,h\neq0, g\leq h<0\Rightarrow \psi(g)\leq\psi(h).$
  \end{enumerate}

 H-type asymptotic couples have the good property that the map $\psi$ is constant on archimedean classes of $G$, which is not the case for 
  asymptotic couples in general. This will become important in Section \ref{powerseries} when we define a derivation on power series.
  Note that if $(G,\psi)$ is an asymptotic couple, then $(G,\psi')$ is still an asymptotic couple for any translate 
  $\psi':=\psi+h$ of $\psi$ (where $h\in G$). 
  \begin{Notation}
  In all the rest of this section, $(G,\psi)$ is an asymptotic couple. 
We will denote by $\leq$ the order of $G$,  by $v_G$  the natural valuation associated to $(G,\leq)$ and by 
   $\sim$ the equivalence relation defined by $g\sim h\Leftrightarrow v_G(g)=v_G(h)$.  If $g\in G$ and $H\subseteq G$, we write $g<H$ to mean that 
  $g<h$ for all $h\in H$; note that if $H$ is a convex subgroup this implies $v_G(h)>v_G(g)$ for all $h\in H$. If $(G,\psi)$ happens
  to be H-type, then $\psi$ is consistent with $v_G$, in which case we will denote by $\omega$ the map induced by $\psi$ on $\Gamma:=v_G(G^{\neq0})$. 
  We denote by $D_G$ the map $D_G(g):=\psi(g)+g$ from $G^{\neq0}$ to $G$. We denote by $\Psi$ the set $\psi(G^{\neq0})$.
 We know from \cite[Proposition 2.3(4)]{Aschdries2}  that $D_G:G^{\neq0}\to G$ is strictly increasing, so in particular it is injective.
  The asymptotic couple $(G,\psi)$ is said to have \textbf{asymptotic integration} if the map 
$D_G$ is surjective. 
 \end{Notation}

  We now want to describe the behavior of the map $\psi$. In order to do this, we introduce the following notion: 
  \begin{Def}\label{cutpointdef}
   We say that $c\in G$ is a \textbf{cut point} for $\psi$ if 
$v_G(\psi(g))>v_G(g)\Leftrightarrow v_G(c)>v_G(g)$ for any $g\in G^{\neq0}$. A cut point $c$ for $\psi$ is called \textbf{regular} if $c=0$ or if $c\neq0$ has the same sign 
as $\psi(c)$.
  \end{Def}

  We want to show that cut points always exist, and we want to show how cut points can be used to describe the behavior of $\psi$. 
  To do this, we will need the following two lemmas:
 
\begin{Lem}\label{Fundlemofascouples}
 Let $(G,\psi)$ be an asymptotic couple and $g,h\in G^{\neq0}$ with $g\neq h$. Then 
$v_G(\psi(g)-\psi(h))>v_G(g-h)$.
\end{Lem}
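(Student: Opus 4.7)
My plan is to prove the equivalent quantitative statement: $n\,|\psi(g)-\psi(h)| < |g-h|$ in $G$ for every $n\in\N_{\geq 1}$. First I reduce to the case $\psi(g)\neq\psi(h)$, since otherwise $\psi(g)-\psi(h)=0$ and $v_G(0)=\infty$ makes the claim trivial. By symmetry I may further assume $\psi(g) > \psi(h)$ and set $c := \psi(g) - \psi(h) > 0$; the task becomes showing $nc < |g-h|$ for all $n$.

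The key structural step is to establish $\psi(g-h) = \psi(h)$. Writing $g = (g-h) + h$ and applying (AC1) gives $\psi(g) \geq \min(\psi(g-h), \psi(h))$. Combined with the standard ultrametric-type equality for $\psi$---if $\psi(u)\neq\psi(v)$ and $u+v\neq 0$ then $\psi(u+v) = \min(\psi(u),\psi(v))$, derived by applying (AC1) to both $g = (g-h)+h$ and its inverse decomposition $h = g - (g-h)$, using $\psi(-x)=\psi(x)$---the hypothesis $\psi(g) > \psi(h)$ forces $\psi(g-h) = \psi(h)$. Now (AC3) applied to the pair $(g,g-h)$ gives $\psi(g) < \psi(g-h) + |g-h|$, i.e.\ $c < |g-h|$, which settles the case $n=1$.

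The bulk of the work is the amplification to arbitrary $n$. My strategy is to construct, for each $n\geq 1$, an auxiliary element $a_n \in G^{\neq0}$ whose $\psi$-value is exactly $\psi(h) + nc$, obtained by iterating the ``$\psi(g-h)=\psi(h)$'' argument on well-chosen integer combinations of $g$ and $h$ (e.g.\ repeatedly substituting a copy of $g$ for a copy of $h$ in carefully chosen sums, so that each iteration increments the $\psi$-value by exactly $c$ via the ultrametric equality above). Applying (AC3) then to the pair $(a_n, g-h)$ would yield $nc = \psi(a_n) - \psi(g-h) < |g-h|$, completing the proof.

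The main obstacle is precisely this amplification. Axioms (AC2)--(AC3) individually provide no ``multiplier''---indeed (AC2) makes $\psi$ invariant under positive integer scaling---so producing the factor $n$ has to come from a genuinely structural construction of the $a_n$ inside $G$. An alternative worth trying, if the direct construction becomes unwieldy, is a contradiction argument: assume $|g-h| \leq nc$ for some $n$, deduce that $g-h$ and $c$ are archimedean-comparable, and combine this with (AC1) and the identity $\psi(g-h)=\psi(h)$ to force a violation of the strict inequality in (AC3). The technical care required to carry out either variant is where I expect the proof's real content to lie.
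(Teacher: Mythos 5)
The paper does not actually prove this lemma; it simply cites \cite[Proposition 2.3]{Aschdries2}. Your reduction to $\psi(g)\neq\psi(h)$, the identity $\psi(g-h)=\min(\psi(g),\psi(h))$, and the $n=1$ case via (AC3) are all correct and standard. But the entire content of the statement beyond $n=1$ is the amplification, and that is exactly the step you leave as a plan rather than a proof --- and your primary plan cannot work. You propose to build $a_n\in G^{\neq0}$ with $\psi(a_n)=\psi(h)+nc$, but there is no reason for $\psi(h)+nc$ to lie in $\Psi=\psi(G^{\neq0})$: already for $n=2$ this fails whenever $\psi(g)=\max\Psi$ (e.g.\ take $G=\Z\times\Z$ lexicographically ordered with $\psi\equiv(0,-1)$ on the first archimedean class and $\psi\equiv(0,0)$ on the second, $h=(1,0)$, $g=(0,1)$; then $\psi(h)+2c=(0,1)\notin\Psi=\{(0,-1),(0,0)\}$, although the lemma of course holds there). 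Your fallback contradiction argument is closer to something workable, but as sketched it infers $\psi(c)=\psi(g-h)$ from the archimedean comparability of $c:=\psi(g)-\psi(h)$ and $g-h$; that inference uses constancy of $\psi$ on archimedean classes, which holds only for H-type couples, while the lemma is stated for arbitrary asymptotic couples. So as written the proposal has a genuine gap at the decisive step.

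For the record, the standard way to get the factor $n$ is not to climb up inside $\Psi$ but to divide: $\psi$ extends to the divisible hull $\Q\otimes G$ by (AC2) (set $\psi(qx):=\psi(x)$ for $q\in\Q^{\neq0}$), the axioms persist, and (AC3) applied to the pair $\bigl(g,\tfrac{1}{n}(g-h)\bigr)$ gives
\[
\psi(g)<\psi\bigl(\tfrac{1}{n}(g-h)\bigr)+\tfrac{1}{n}\vert g-h\vert=\psi(g-h)+\tfrac{1}{n}\vert g-h\vert=\psi(h)+\tfrac{1}{n}\vert g-h\vert,
\]
i.e.\ $n\bigl(\psi(g)-\psi(h)\bigr)<\vert g-h\vert$ at once. (Alternatively, one can complete your contradiction route for general asymptotic couples by taking $n\geq2$ minimal with $\vert g-h\vert\leq nc$, writing $nc=\vert g-h\vert+w$ with $0\leq w<c$, and running a case distinction on $\psi(w)$ versus $\psi(g-h)$ using (AC1) and (AC3); but this requires precisely the technical care you deferred.)
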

\begin{proof}
 See \cite[Proposition 2.3.]{Aschdries2}.
\end{proof}

   \begin{Lem}\label{lemmacutpoint}
    Let $(G,\psi)$ be an asymptotic couple. Then the following holds:
      \begin{enumerate}[(i)]
	\item For any $g\in G^{\neq0}$ with $v_G(g)\geq v_G(\psi(g))$, if $v_G(h)\geq v_G(\psi(g))$ then $v_G(\psi(h)-\psi(g))>v_G(\psi(g))$;  in particular 
	$\psi(h)$ and $\psi(g)$ are archimedean-equivalent and have the same sign.
	\item For any $c\in G^{\neq0}$, $c\in G$ is a cut point for $\psi$ if and only if $\psi(c)\sim c$.
	\item For any $c,g\in G$, if $c$ is a cut point for $\psi$, then $g$ is a cut point for $\psi$ if and only if $g\sim c$.
       \item For any $g\in G^{\neq0}$, if $v_G(g)\geq v_G(\psi(g))$, then $\psi(g)$ is a regular cut point for $\psi$.
       \item If $(G,\psi)$ is H-type and $c\in G^{\neq0}$ is a cut point for $\psi$, then 
       $\psi(c)$ is a fixpoint of $\psi$.
      \end{enumerate}    
   \end{Lem}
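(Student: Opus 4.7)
The plan for (i) is to combine Lemma~\ref{Fundlemofascouples} with the ultrametric inequality for $v_G$. Assuming $h\neq g$ (otherwise $\psi(h)-\psi(g)=0$ and the claim is trivial), Lemma~\ref{Fundlemofascouples} gives $v_G(\psi(h)-\psi(g))>v_G(h-g)\geq\min(v_G(h),v_G(g))\geq v_G(\psi(g))$, where the last inequality uses both standing hypotheses. Writing $\psi(h)=\psi(g)+\epsilon$ with $v_G(\epsilon)>v_G(\psi(g))$, we get $|\epsilon|<|\psi(g)|$, so $\psi(h)$ is archimedean-equivalent to $\psi(g)$ and of the same sign.

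For (ii) I would argue the two directions separately. For the forward implication, setting $g=c$ in the definition of cut point immediately gives $v_G(\psi(c))\leq v_G(c)$; to exclude strict inequality, suppose $v_G(\psi(c))<v_G(c)$ and substitute $g:=\psi(c)$ into the cut-point condition (note $\psi(c)\neq 0$, for otherwise $v_G(\psi(c))=\infty>v_G(c)$ would force the impossible $v_G(c)>v_G(c)$). The cut-point property then yields $v_G(\psi(\psi(c)))>v_G(\psi(c))$, while (i) applied to $(c,\psi(c))$ gives $\psi(\psi(c))\sim\psi(c)$, a contradiction. For the converse, assume $\psi(c)\sim c$. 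If $v_G(g)\geq v_G(c)=v_G(\psi(c))$, then (i) applied to $(c,g)$ yields $\psi(g)\sim\psi(c)\sim c$, whence $v_G(\psi(g))=v_G(c)\leq v_G(g)$. If instead $v_G(g)<v_G(c)$ and we had $v_G(\psi(g))\leq v_G(g)$, then (i) applied to $(g,c)$ would give $\psi(c)\sim\psi(g)$, contradicting $v_G(\psi(c))=v_G(c)>v_G(g)\geq v_G(\psi(g))$.

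For (iii), if $c$ is a cut point and $g\sim c$, then by (ii) we have $\psi(c)\sim c\sim g$, so (i) applied to $(c,g)$ gives $\psi(g)\sim\psi(c)\sim g$, whence $g$ is a cut point by (ii). Conversely, if $c$ and $g$ are both cut points, the defining equivalence $v_G(c)>v_G(g')\Leftrightarrow v_G(g)>v_G(g')$ (quantified over $g'\in G^{\neq 0}$) forces $v_G(c)=v_G(g)$. For (iv), given $v_G(g)\geq v_G(\psi(g))$, apply (i) to the pair $(g,\psi(g))$: the hypotheses hold, and the conclusion gives both $\psi(\psi(g))\sim\psi(g)$ (so $\psi(g)$ is a cut point by (ii)) and that $\psi(\psi(g))$ has the same sign as $\psi(g)$, i.e.\ $\psi(g)$ is regular.

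Finally, (v) is immediate from the fact, recalled in the preamble of this section, that in H-type asymptotic couples $\psi$ is constant on archimedean classes of $G$: since $\psi(c)\sim c$ by (ii), we obtain $\psi(\psi(c))=\psi(c)$. The main obstacle is the forward direction of (ii): promoting the weak inequality $v_G(\psi(c))\leq v_G(c)$ to equality requires the judicious auxiliary choice $g:=\psi(c)$ and a careful combination of the cut-point condition with (i); everything else follows cleanly once (i) and (ii) are in hand.
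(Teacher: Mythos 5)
Your proposal is correct and follows essentially the same route as the paper: (i) from Lemma \ref{Fundlemofascouples} plus the ultrametric inequality, (ii) by substituting $g:=\psi(c)$ into the cut-point condition and invoking (i) to rule out strict inequality, with the converse by the same two-case analysis, and (iv), (v) exactly as in the paper. The only cosmetic difference is that for (iii) you route the forward direction through (i) and (ii) where the paper observes it follows directly from the definition of cut point; both are fine.
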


   \begin{proof}
    For (i): By Lemma \ref{Fundlemofascouples}, $v_G(\psi(h)-\psi(g))>v_G(h-g)\geq v_G(\psi(g))$.
    For (ii): Assume $c$ is a cut point, then by definition we have $v_G(c)\geq v_G(\psi(c))$, hence by (i) $\psi(\psi(c))\sim \psi(c)$. If $v_G(c)>v_G(\psi(c))$
    were true, we would have by definition of cut point 
    $v_G(\psi(\psi(c)))> v_G(\psi(c))$, which is impossible; thus, we must have $\psi(c)\sim c$. Conversely, assume 
   $\psi(c)\sim c$ and let $h\in G$ with $v_G(c)>v_G(h)$. If $v_G(h)\geq v_G(\psi(h))$ were true, we would have by (i) $\psi(c)\sim\psi(h)$, a contradiction, so 
   $v_G(\psi(h))>v_G(h)$ must hold. Now take $v_G(h)\geq v_G(c)$; then $v_G(h)\geq v_G(\psi(c))$, which by (i) implies we have $\psi(h)\sim\psi(c)$ hence $v_G(h)\geq v_G(\psi(h))$. 
   This proves that $c$ is a cut point.
     (iii) follows directly from the definition of cut point.
     For (iv): Let $0\neq g$ with $v_G(g)\geq v_G(\psi(g))$. Then by (i) $\psi(\psi(g))\sim\psi(g)$, which by (ii) means that $\psi(g)$ is a cut point. Moreover,
     $\psi(g)$ has the same sign as $\psi(\psi(g))$ by (i).
    For (v):   
    Since $(G,\psi)$ is H-type, $\psi$ is constant on archimedean classes of $G$, so $c\sim\psi(c)$ implies $\psi(c)=\psi(\psi(c))$.       
   \end{proof}

    We can now describe the behavior of $\psi$ and $D_G$ using a cut point (we recall that $D_G=\psi+id$). Roughly speaking, the role of a cut point $c$ is to separate the group 
    into two parts, on each of which $\psi$ has a different behavior. Indeed, $\psi$ acts like a centripetal precontraction (see Definition \ref{contractiondef}) on 
    the set of $g$'s with $v_G(g)<v_G(c)$, whereas $\psi(g)$ is close to $\psi(c)$ for all $g\in G$ with $v_G(g)\geq v_G(c)$. 
    This is given by the next proposition, which will have important consequences for the differential rank in Section \ref{differentialranksection}:
   \begin{Prop}\label{behaviourofDG}
    Let $(G,\psi)$ be an asymptotic couple. Then $G$ admits a regular cut point for $\psi$. Moreover, for any regular cut point $c$ and any $g\neq0$, the following holds: 
	   \begin{enumerate}[(i)]
   \item If $v_G(g)\geq v_G(c)\neq\infty$, then $\psi(g)\sim c$, $v_G(\psi(g)-\psi(c))\geq v_G(c)$ and $\psi(g)$ has the same sign as $c$. 
	 \item If  $v_G(g)> v_G(c)\neq\infty$, then $D_G(g)\sim c$ and  $D_G(g)$ has the same sign as $c$.
    \item If $0\neq g\sim c$, then $v_G(D_G(g))\geq v_G(c)$.
    \item If $v_G(c)> v_G(g)$, then $v_G(\psi(g))>v_G(g)$, $D_G(g)\sim g$ and $D_G(g)$ has the same sign as $g$.
  \end{enumerate}
   \end{Prop}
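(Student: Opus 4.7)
The plan is to first show that a regular cut point always exists, then verify the four properties for any such cut point.

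For existence, I would split on whether there is some $g\in G^{\neq 0}$ with $v_G(g)\geq v_G(\psi(g))$. If such a $g$ exists, then by Lemma \ref{lemmacutpoint}(iv), $\psi(g)$ itself is a regular cut point, so we are done. Otherwise, $v_G(\psi(g))>v_G(g)$ for every $g\neq 0$, and then the element $c=0$ qualifies as a cut point directly from Definition \ref{cutpointdef} (using the convention $v_G(0)=\infty$), and is regular by definition.

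For the four properties, fix a regular cut point $c$. Note that the cases (i), (ii) assume $v_G(c)\neq\infty$, i.e. $c\neq 0$, so by Lemma \ref{lemmacutpoint}(ii) we have $\psi(c)\sim c$. For (i), from $v_G(g)\geq v_G(c)=v_G(\psi(c))$, I apply Lemma \ref{lemmacutpoint}(i) with $c$ in the role of ``$g$'' (permitted since $v_G(c)\geq v_G(\psi(c))$ as $c$ is a cut point): this gives $v_G(\psi(g)-\psi(c))>v_G(\psi(c))=v_G(c)$, which in turn implies $\psi(g)\sim\psi(c)\sim c$ and that $\psi(g)$ shares the sign of $\psi(c)$. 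Since $c$ is regular, $\psi(c)$ has the same sign as $c$, so $\psi(g)$ does too. For (ii), from (i) we have $v_G(\psi(g))=v_G(c)<v_G(g)$, so $D_G(g)=\psi(g)+g$ has the same archimedean class and sign as $\psi(g)$, and thus as $c$. For (iii), if $0\neq g\sim c$ then $g$ is also a cut point by Lemma \ref{lemmacutpoint}(iii), so Lemma \ref{lemmacutpoint}(ii) gives $\psi(g)\sim g$; hence $v_G(D_G(g))=v_G(\psi(g)+g)\geq\min(v_G(\psi(g)),v_G(g))=v_G(g)=v_G(c)$ (this works whether or not there is cancellation). For (iv), the case $v_G(c)>v_G(g)$ is immediate from the cut point condition: $v_G(\psi(g))>v_G(g)$, so $D_G(g)=\psi(g)+g$ has the same archimedean class and sign as $g$.

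The conceptual heart of the argument is the existence step and the reliance on Lemma \ref{lemmacutpoint}, particularly parts (i) and (ii); once those are in hand, the four items are routine applications of the ultrametric inequality together with the definition of cut point. The main subtlety to watch out for is the degenerate case $c=0$: items (i) and (ii) are then vacuously satisfied thanks to the hypothesis $v_G(c)\neq\infty$, item (iii) is vacuous because $g\sim 0$ forces $g=0$, and item (iv) covers all of $G^{\neq 0}$, matching the fact that in this case $\psi$ is ``strictly contractive'' on the whole group.
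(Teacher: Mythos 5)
Your proof is correct and follows essentially the same route as the paper's: existence via Lemma \ref{lemmacutpoint}(iv) (or $c=0$ in the contractive case), (i) from Lemma \ref{lemmacutpoint}(i) and (ii) plus regularity, and (ii)--(iv) by the ultrametric inequality and sign considerations on $\psi(g)+g$. The only difference is that you spell out the degenerate case $c=0$ and the instantiation of Lemma \ref{lemmacutpoint}(i) more explicitly than the paper does, which is harmless.
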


    \begin{proof}
     Assume $0$ is not a cut point. This means there is $g\in G$ with $v_G(g)\geq v_G(\psi(g))$, which by Lemma \ref{lemmacutpoint}(iv) means that 
      $\psi(g)$ is a regular cut point for $\psi$. Now let $c$ be a regular cut point for $\psi$ and $0\neq g$. (i)
      follows from Lemma \ref{lemmacutpoint}(i) and (ii).  For (ii): since 
      by (i) $v_G(g)>v_G(\psi(g))$, it follows that $g+\psi(g)$ is equivalent to and has the same sign as $\psi(g)$, and we conclude by (i).
 For (iii): $v_G(D_G(g))=v_G(\psi(g)+g)\geq v_G(g)$ (because $g\sim\psi(g)$). For (iv): $v_G(\psi(g))\geq  v_G(g)$ follows from the definition of cut point.
 It then follows that $\psi(g)+g$ is equivalent to and has the same sign as $g$.
    \end{proof}
    
 Proposition \ref{behaviourofDG} allows us to give a criterion to decide whether a given convex subgroup is $\psi$-compatible. This will be given by the next three lemmas:
    \begin{Lem}\label{positivepsicutpointLem}
     	 If $c\neq0$ is a cut point and $g\in G^{\neq0}$ is such that $v_G(c)> v_G(\psi(g))$, then $\psi(g)$ is negative. In particular, $\psi(g)>0$ implies that 
      $\psi^2(g)$ is a non-zero cut point for $\psi$.
    \end{Lem}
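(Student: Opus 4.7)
The plan is to first pin down the archimedean position of $g$ strictly below $c$, then to derive a contradiction with $\psi(g)>0$ from axiom (AC3), and finally to deduce the ``in particular'' clause by contraposition together with a second application of Lemma \ref{Fundlemofascouples}.

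First I would establish $v_G(c)>v_G(g)$. Suppose instead $v_G(g)\geq v_G(c)$; note $g\neq c$, since otherwise $v_G(\psi(g))=v_G(c)$, contradicting the hypothesis. Lemma \ref{Fundlemofascouples} applied to $(c,g)$ gives $v_G(\psi(g)-\psi(c))>v_G(g-c)\geq v_G(c)$. But $v_G(\psi(c))=v_G(c)$ (Lemma \ref{lemmacutpoint}(ii)) together with $v_G(c)>v_G(\psi(g))$ forces $v_G(\psi(g)-\psi(c))=v_G(\psi(g))<v_G(c)$, a contradiction. The cut-point property then upgrades this to $v_G(c)>v_G(\psi(g))>v_G(g)$, so $|\psi(g)|$ is archimedean-strictly larger than $|c|\sim|\psi(c)|$.

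The heart of the argument is the contradiction with (AC3). Suppose for contradiction that $\psi(g)>0$. Axiom (AC3) applied to the pair $(g,c)$ gives $\psi(g)-\psi(c)<|c|$. On the other hand, the archimedean dominance from the previous step gives $\psi(g)>|\psi(c)|\geq \psi(c)$, so $\psi(g)-\psi(c)>0$, while $v_G(\psi(g)-\psi(c))=v_G(\psi(g))<v_G(c)$ places this positive quantity in an archimedean class strictly above $|c|$, whence $\psi(g)-\psi(c)>|c|$. This contradicts (AC3), so $\psi(g)\leq 0$; and since $v_G(\psi(g))<v_G(c)<\infty$ rules out $\psi(g)=0$, we conclude $\psi(g)<0$. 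I expect this to be the most delicate step, because it requires combining the strict numerical inequality provided by (AC3) with the coarser archimedean comparison between $|\psi(g)|$, $|\psi(c)|$ and $|c|$.

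For the ``in particular'' clause I would argue by contraposition: $\psi(g)>0$ implies $v_G(\psi(g))\geq v_G(c)$. Applying Lemma \ref{Fundlemofascouples} to $(c,\psi(g))$ (with the case $\psi(g)=c$ handled directly, since then $\psi^2(g)=\psi(c)\sim c\neq 0$) yields $v_G(\psi^2(g)-\psi(c))>v_G(\psi(g)-c)\geq v_G(c)=v_G(\psi(c))$, so $\psi^2(g)\sim\psi(c)\sim c$ and in particular $\psi^2(g)\neq 0$. Since being a cut point depends only on the archimedean class (Lemma \ref{lemmacutpoint}(iii)), $\psi^2(g)$ is itself a non-zero cut point for $\psi$.
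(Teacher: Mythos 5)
Your proof is correct and follows essentially the same route as the paper: the negativity of $\psi(g)$ comes from applying (AC3) with second argument $c$ together with $\psi(c)\sim c$, and the ``in particular'' clause comes from the contrapositive, the observation that $v_G(\psi(g))\geq v_G(c)$ forces $\psi^2(g)\sim c$ (you inline this via Lemma \ref{Fundlemofascouples}, where the paper cites Proposition \ref{behaviourofDG}(i)), and Lemma \ref{lemmacutpoint}(iii). Your preliminary step establishing $v_G(c)>v_G(g)$ is correct but superfluous, since the archimedean dominance of $\vert\psi(g)\vert$ over $\vert c\vert$ that you actually use is just the hypothesis $v_G(c)>v_G(\psi(g))$.
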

\begin{proof}
 By Lemma \ref{lemmacutpoint}(ii), we have $v_G(\psi(c)+\vert c\vert)\geq v_G(c)$. If $\psi(g)>0$, then $v_G(c)>v_G(\psi(g))$ implies 
    $\psi(c)+\vert c\vert<\psi(g)$ which contradicts (AC3). Thus, $\psi(g)>0$ implies $v_G(\psi(g))\geq v_G(c)$, 
    which by Proposition \ref{behaviourofDG}(i) implies $\psi^2(g)\sim c$. Since $c\neq0$, it follows that $\psi^2(g)\neq0$. 
    By Lemma \ref{lemmacutpoint}(iii), $\psi^2(g)$ is then a non-zero cut point for $\psi$.
\end{proof}

 \begin{Lem}\label{cinHlemma}
  Let $H$ be a convex subgroup of $G$, $c$ a cut point for $\psi$ and assume $c\in H$. Then $g\in H\Rightarrow \psi(g)\in H$.
 \end{Lem}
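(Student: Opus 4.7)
The plan is to split on the archimedean comparison between $g$ and the cut point $c$, so that in each case the conclusion $\psi(g)\in H$ follows directly from the convexity of $H$.

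Let $g\in H^{\neq 0}$ (the statement is vacuous for $g=0$ since $\psi$ is only defined on $G^{\neq 0}$). First I would handle the case $v_G(g)<v_G(c)$ (this is the only case possible when $c=0$). By the defining property of a cut point, we immediately get $v_G(\psi(g))>v_G(g)$, so $\psi(g)$ lies in a strictly smaller archimedean class than $g$; in particular $\lvert\psi(g)\rvert<\lvert g\rvert$. Since $H$ is convex and contains $g$, it contains $\psi(g)$.

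The remaining case is $c\neq 0$ and $v_G(g)\geq v_G(c)$. Here I would invoke Lemma \ref{lemmacutpoint}: part (ii) gives $\psi(c)\sim c$, so $v_G(c)=v_G(\psi(c))$; then part (i), applied with the element $c$ (noting $v_G(c)\geq v_G(\psi(c))$) and with $h:=g$ (which satisfies $v_G(h)\geq v_G(c)=v_G(\psi(c))$), yields $v_G(\psi(g)-\psi(c))>v_G(\psi(c))$. Consequently $\psi(g)\sim\psi(c)\sim c$, so $\lvert\psi(g)\rvert\leq n\lvert c\rvert$ for some $n\in\N$. Since $c\in H$ and $H$ is a convex subgroup, this forces $\psi(g)\in H$.

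There is no real obstacle here: the argument is a two-case dissection governed entirely by the definition of cut point together with Lemma \ref{lemmacutpoint}(i)--(ii). The only subtlety is remembering to allow $c=0$ (subsumed by the first case, since then $v_G(c)=\infty$) and to note that $g=0$ is vacuous, after which both cases reduce to the standard fact that an element whose absolute value is bounded above by a multiple of an element of a convex subgroup lies in that subgroup.
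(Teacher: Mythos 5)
Your proof is correct and follows essentially the same route as the paper: a two-case split on whether $v_G(g)\geq v_G(c)$ or $v_G(c)>v_G(g)$, concluding by convexity of $H$ in each case. The only cosmetic difference is that you invoke Lemma \ref{lemmacutpoint}(i)--(ii) directly where the paper cites Proposition \ref{behaviourofDG}(i) and (iv), which are themselves derived from those same facts.
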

  \begin{proof}
   Assume $g\in H$. If $v_G(g)\geq v_G(c)$, then Proposition \ref{behaviourofDG}(i) implies 
   $\psi(g)\sim c$. Since $c\in H$, it follows from the convexity of $H$ that 
   $\psi(g)\in H$. If $v_G(c)> v_G(g)$, then it follows from Proposition \ref{behaviourofDG}(iv) that
   $v_G(\psi(g))> v_G(g)$. Since $g\in H$, it follows from the convexity of $H$ that 
   $\psi(g)\in H$.
  \end{proof}

\begin{Lem}\label{compatiblecontainsf}
 Let $c$ be a cut point for $\psi$ and $H$  a non-trivial convex subgroup of $G$. Then $H$  is compatible with $\psi$ if and only if the following two conditions hold: 
\begin{enumerate}[(i)]
 \item $c\in H$.
\item For any $g\in G$ with $v_G(c)> v_G(g)$, $\psi(g)\in H\Rightarrow g\in H$.
\end{enumerate}

 Moreover, if $H$ is compatible with $\psi$, then $\psi(g)<H$ for any $g\in G\backslash H$.
\end{Lem}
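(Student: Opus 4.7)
The proof splits naturally into three parts: the forward implication, the backward implication, and the ``moreover'' clause.

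For the forward implication ($\psi$-compatibility implies (i) and (ii)), condition (ii) is immediate, since $\psi$-compatibility yields the biconditional $g\in H\Leftrightarrow\psi(g)\in H$ for every $g\in G^{\neq0}$, which in particular gives the stated implication. For (i), the case $c=0$ is trivial, so assume $c\neq0$ and argue by contradiction: suppose $c\notin H$. Since the final segment $v_G(H\backslash\{0\})$ of $\Gamma$ does not contain $v_G(c)$, we obtain $v_G(h)>v_G(c)$ for every $h\in H\backslash\{0\}$. By non-triviality, pick such an $h$; Proposition \ref{behaviourofDG}(i) then forces $\psi(h)\sim c$, and $\psi$-compatibility gives $\psi(h)\in H$. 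Convexity of $H$ (applied to the element $\psi(h)\sim c$) yields $c\in H$, a contradiction.

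For the backward implication ((i) and (ii) imply $\psi$-compatibility), the direction $g\in H\Rightarrow\psi(g)\in H$ is exactly Lemma \ref{cinHlemma}, applicable since $c\in H$ by (i). For the converse, assume $\psi(g)\in H$: if $v_G(g)\geq v_G(c)$, then $c\in H$ combined with the convexity of $H$ forces $g\in H$; otherwise $v_G(c)>v_G(g)$ and (ii) applies directly.

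For the ``moreover'' clause, assume $H$ is $\psi$-compatible and $g\in G\backslash H$. By compatibility $\psi(g)\notin H$; since $H$ is a convex subgroup, this gives $v_G(\psi(g))<v_G(h)$ for every $h\in H\backslash\{0\}$, equivalently $|\psi(g)|>|h|$ for every $h\in H$. The remaining point is to establish $\psi(g)<0$, for then $\psi(g)<-|h|\leq h$ for every $h\in H$. Suppose for contradiction that $\psi(g)>0$. Since $c\in H$ but $\psi(g)\notin H$, we have $v_G(\psi(g))<v_G(c)$ (and in particular $c\neq0$), so Lemma \ref{positivepsicutpointLem} produces a non-zero cut point $\psi^2(g)$. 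Lemma \ref{lemmacutpoint}(iii) gives $\psi^2(g)\sim c$, hence convexity forces $\psi^2(g)\in H$; on the other hand $\psi$-compatibility applied twice to $g\notin H$ yields $\psi^2(g)\notin H$, the desired contradiction.

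The main obstacle is the ``moreover'' clause: combining the cut-point machinery of Lemma \ref{positivepsicutpointLem} with the convexity of $H$ to rule out $\psi(g)>0$. Once this sign obstruction is removed, both directions of the equivalence and the final inequality reduce to routine unwinding of the definitions of $\psi$-compatibility and convex subgroup, together with the structural description of $\psi$ around the cut point given by Proposition \ref{behaviourofDG}.
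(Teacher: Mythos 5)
Your proof is correct and follows the paper's argument almost step for step: the forward implication via Proposition \ref{behaviourofDG}(i) and convexity, the backward implication via Lemma \ref{cinHlemma} together with the two-case split on $v_G(g)$ versus $v_G(c)$, and the ``moreover'' clause via Lemma \ref{positivepsicutpointLem}. The only real deviation is in the sign argument at the end: the paper simply quotes the first assertion of Lemma \ref{positivepsicutpointLem} (namely that $v_G(c)>v_G(\psi(g))$ with $c\neq0$ forces $\psi(g)<0$), whereas you re-derive it by producing the non-zero cut point $\psi^2(g)\sim c$ and playing convexity of $H$ against $\psi$-compatibility; both routes are sound. One small slip to repair: the parenthetical ``and in particular $c\neq0$'' does not follow from $v_G(\psi(g))<v_G(c)$, since for $c=0$ that inequality reads $v_G(\psi(g))<\infty$ and is automatic. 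You genuinely need $c\neq0$ to invoke Lemma \ref{positivepsicutpointLem}, so the case $c=0$ should be dispatched separately --- e.g.\ by Lemma \ref{gapiscutpointLem}(ii), which gives $\sup\Psi=0$ and hence rules out $\psi(g)>0$ outright. (The paper's own proof glosses over the same point, so this is a shared blemish rather than a flaw specific to your argument.)
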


 \begin{proof}
  Assume that $H$ is compatible with $\psi$.
  Then clearly (ii) is true. Towards a contradiction, 
  assume that $c\notin H$ and take $g\in H$. By convexity of $H$, we have $v_G(g)> v_G(c)$. By Proposition \ref{behaviourofDG}(i), this 
  implies $\psi(g)\sim c$.  It follows from the convexity of $H$ that $\psi(g)\notin H$, which contradicts the fact that 
$H$ is $\psi$-compatible. This proves that (i) must hold. Conversely, assume (i) and (ii) and let us prove that 
$H$ is compatible with $\psi$. It follows from condition
(i) and from Lemma \ref{cinHlemma} that $g\in H\Rightarrow\psi(g)\in H$. Now assume that 
$\psi(g)\in H$ and let us show $g\in H$. If $v_G(c)> v_G(g)$, this follows from condition (ii).
If $v_G(g)\geq v_G(c)$, then since $c\in H$ it follows from convexity of $H$ that $g\in H$. This proves that 
$H$ is compatible with $\psi$.
Now  let $g\in G\backslash H$. Then $\psi(g)\notin H$,
 and since $c\in H$, the convexity of $H$ implies $v_G(c)> v_G(\psi(g))$. It follows from Lemma \ref{positivepsicutpointLem} that $\psi(g)$ is negative. By convexity of $H$, we then 
 have $\psi(g)<H$.
 \end{proof}

Proposition \ref{behaviourofDG} shows that the behavior of $\psi$ is particularly simple if $0$ is the cut point for $\psi$ 
(note that if $0$ is a cut point for $\psi$ then it is the only cut point). Indeed, in that case, 
$\psi$ acts like a centripetal precontraction map on $G^{<0}$. Therefore, it can be practical to transform a given map $\psi$ into another 
map $\psi'$ which has $0$ as a cut point. One way of doing that is to translate $\psi$ by a gap or by the maximum of $\Psi=\psi(G^{\neq0})$ if it exists. We recall that
 a 
\textbf{gap} for $\psi$ is an element $g\in G$ such that $\Psi<g<D_G(G^{>0})$. Aschenbrenner and v.d.Dries showed that the existence of a gap or of a maximum of $\Psi$ is 
connected to the existence of asymptotic integration in $(G,\psi)$.
 We recall the following results (see \cite[Proposition 2.2 and Theorem 2.6]{Aschdries2}): 

\begin{Lem}
 Let $(G,\psi)$ be an asymptotic couple. The following holds:
 \begin{enumerate}[(i)]
  \item $\psi$ has at most one gap.
    \item $G\backslash D_G(G^{\neq0})$ has at most one element.
  \item If $g$ is a gap for $\psi$ or the maximum of $\Psi$ then $G\backslash D_G(G^{\neq0})=\{g\}$. In particular, 
  if $(G,\psi)$ has asymptotic integration, then $\Psi$ has no maximum and $\psi$ has no gap.
 \end{enumerate}

\end{Lem}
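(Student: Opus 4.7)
The plan is to prove (i) by a short difference argument, (iii) by direct case verification combined with (ii) for uniqueness, and (ii) via a structural analysis of the complement of $D_{G}(G^{\neq0})$, which I expect to be the main obstacle.

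For (i), suppose $g_{1}<g_{2}$ are two gaps and set $a:=g_{2}-g_{1}\in G^{>0}$. The gap condition on $g_{2}$ gives $g_{2}<D_{G}(a)=\psi(a)+a$, whence $\psi(a)>g_{1}$; but $\psi(a)\in\Psi$ and the gap condition on $g_{1}$ gives $g_{1}>\Psi$, a contradiction. For (iii), the case analysis is direct. If $g$ is a gap and $g=D_{G}(h)=\psi(h)+h$ for some $h\neq 0$, then $h>0$ contradicts $g<D_{G}(G^{>0})$, while $h<0$ forces $\psi(h)=g-h>g>\Psi$, impossible since $\psi(h)\in\Psi$; hence $g\in G\setminus D_{G}(G^{\neq0})$, and (ii) makes this complement equal $\{g\}$. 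Similarly, if $g=\psi(a)=\max\Psi=D_{G}(h)$, then $h>0$ contradicts (AC3) applied with $\psi(a)<\psi(h)+h=g=\psi(a)$, and $h<0$ gives $\psi(h)=g-h>g=\max\Psi$, again impossible. The ``in particular'' clause is the contrapositive: asymptotic integration gives an empty complement, excluding both a gap and a maximum of $\Psi$.

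For (ii), the plan is to prove the trichotomy that exactly one of the following holds: $D_{G}$ is surjective, $\psi$ has a unique gap, or $\Psi$ has a maximum. The key technical content is showing that every $g\in G\setminus D_{G}(G^{\neq0})$ sits at the ``edge'' of $\Psi$---i.e., that one can solve $\psi(h)+h=g$ whenever $g$ is not exceptionally placed. The natural approach is a fixed-point iteration for the equation $h=g-\psi(h)$: start from a suitable initial guess (for instance $h_{0}:=g-\psi(a)$ for some $a$ with $g<\psi(a)$, if such an $a$ exists) and iterate $h_{n+1}:=g-\psi(h_{n})$. Lemma \ref{Fundlemofascouples} provides archimedean-contractive estimates on the residuals $\psi(h_{n+1})-\psi(h_{n})$ in terms of $h_{n+1}-h_{n}$, which should drive the iteration to a solution unless $g$ sits in the unique exceptional position. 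Establishing convergence rigorously, and identifying the exceptional cases as precisely a gap or $\max\Psi$, is the main technical obstacle and is the content of \cite[Proposition 2.2 and Theorem 2.6]{Aschdries2}. Mutual exclusion of a gap with $\max\Psi$ then follows directly: if $g$ is a gap and $\psi_{0}=\max\Psi$, then $a:=g-\psi_{0}>0$ satisfies $D_{G}(a)=\psi(a)+a\leq\psi_{0}+a=g$, contradicting $g<D_{G}(G^{>0})$; combined with (i), this gives $|G\setminus D_{G}(G^{\neq0})|\leq 1$.
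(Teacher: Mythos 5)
The paper offers no proof of this lemma at all: it is stated as a recalled result with a pointer to \cite[Proposition 2.2 and Theorem 2.6]{Aschdries2}. Your proposal is correct and actually does more than the paper: the arguments you give for (i), for both halves of (iii), and for the mutual exclusion of a gap with $\max\Psi$ are short, complete, and use only the definition of a gap together with (AC3); I checked each case and they all go through (in particular the inequality $\psi(g_2-g_1)>g_1$ against $\Psi<g_1$ in (i), and the two sign cases for $h$ in (iii)). The one place where you do not give an independent argument is (ii): the fixed-point iteration $h_{n+1}:=g-\psi(h_n)$ is the right heuristic (and is essentially how Aschenbrenner--van den Dries proceed, using the contractive estimate of Lemma \ref{Fundlemofascouples}), but you correctly flag that making it converge and pinning down the exceptional element is the real content, and you fall back on the same citation the paper uses. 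So your reliance on the literature is no greater than the paper's, and is confined to the genuinely hard statement; note only that your closing remark that (i) plus mutual exclusion ``gives'' (ii) is circular unless one first knows (from the cited result) that every element of $G\setminus D_G(G^{\neq0})$ is a gap or the maximum of $\Psi$ --- as written, (ii) rests on the citation, not on your trichotomy sketch.
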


Gaps and maximum of $\Psi$ are connected to our notion of cut point; more precisely, we have the following:
 \begin{Lem}\label{gapiscutpointLem}
  \begin{enumerate}[(i)]
   \item If $g$ is a gap for $\psi$ or the maximum of $\Psi$, then $g$ is a regular cut point for $\psi$.
   \item If $0$ is a cut point for $\psi$, then $0=\sup\Psi$ and $(G,\psi)$ does not have asymptotic integration.
  \end{enumerate}

 \end{Lem}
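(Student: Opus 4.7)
For part (ii), the assumption that $0$ is a cut point means $v_G(\psi(h)) > v_G(h)$ for every $h \in G^{\neq 0}$. This immediately gives $|\psi(h)|$ archimedean-strictly smaller than $|h|$, so $D_G(h) = \psi(h) + h \neq 0$ and $0 \notin D_G(G^{\neq 0})$, ruling out asymptotic integration. For $\sup\Psi = 0$: any $h$ with $\psi(h) > 0$ would make $\psi^2(h)$ a nonzero cut point by Lemma \ref{positivepsicutpointLem}, contradicting Lemma \ref{lemmacutpoint}(iii), which forces all cut points to lie in the archimedean class of $0$---here $\{0\}$; hence $\Psi \le 0$. If moreover $\psi(h) \le s$ for all $h$ with some $s \in G^{<0}$, then applying this to $h = s$ yields $|\psi(s)| \ge |s|$, violating the cut point property.

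For part (i), the unifying idea is to sandwich $\psi(g)$ in a narrow interval around $g$ via (AC3), obtaining $\psi(g) \sim g$ with the same sign, and then to apply Lemma \ref{lemmacutpoint}(ii) to exhibit $g$ as a regular cut point. In the \emph{max} case with $g \ne 0$: pick $a$ with $\psi(a) = g$ and apply (AC3) with $(g_0, g_1) = (a, g)$ to get $\psi(g) > g - |g|$, while $\psi(g) \le g$ by maximality. For $g < 0$ the resulting interval $(2g, g]$ is already narrow enough. For $g > 0$ the interval $(0, g]$ only gives $v_G(\psi(g)) \ge v_G(g)$; to rule out strict inequality, iterate (AC3) with $(a, \psi(g))$ to force $\psi^2(g) \sim g$, and then apply Lemma \ref{lemmacutpoint}(i) at the base element $\psi(g)$ to conclude $\psi(g) \sim \psi^2(g) \sim g$, a contradiction. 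In the \emph{gap} case with $g \ne 0$: the same sandwich works, using $\psi(g) < g$ from $\Psi < g$, together with $g < D_G(g)$ (if $g > 0$) or $g < D_G(-g)$ (if $g < 0$, invoking $\psi(-g) = \psi(g)$).

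The main obstacle is the subcase $g = 0 = \max\Psi$, where $\psi(0)$ is undefined so the sandwich breaks. I would handle it via a subgroup obstruction. Let $a$ realize $\psi(a) = 0$ and suppose towards contradiction (using Proposition \ref{behaviourofDG} to produce one) that there exists a nonzero regular cut point $c$. Proposition \ref{behaviourofDG}(i) applied at $a$ forces $v_G(a) < v_G(c)$. Next, for any $h$ with $v_G(h) > v_G(c)$, Proposition \ref{behaviourofDG}(i) gives $\psi(h) \sim c$ with the negative sign of $c$, so $\psi(h) + |h|$ is dominated by $\psi(h)$ and is negative---contradicting (AC3) at $(a, h)$, which demands $0 = \psi(a) < \psi(h) + |h|$. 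Hence no such $h$ exists, i.e.\ $c$ lies in the smallest archimedean class of $G$. Now set $S := \psi^{-1}(0) \cup \{0\}$: by (AC1) combined with $\Psi \le 0$, $S$ is closed under addition, and $\psi(-x) = \psi(x)$ gives closure under negation, so $S$ is a subgroup of $G$ containing $a$. Lemma \ref{Fundlemofascouples} applied to $a$ and $a + c$ yields $v_G(\psi(a + c)) > v_G(c)$; since $c$ is in the smallest class this forces $\psi(a + c) = 0$, i.e.\ $a + c \in S$. Subtracting, $c \in S$, so $\psi(c) = 0$, contradicting Lemma \ref{lemmacutpoint}(ii), which demands $\psi(c) \sim c \ne 0$. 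The companion case $g = 0$ being a gap is essentially vacuous: the gap inequalities would force $|\psi(h)| < |h|$ for every $h > 0$, which together with the (AC2)-invariance of $\psi$ on $\mathbb{N}$-orbits leaves no room for $\psi$ to take nonzero values, contradicting $\Psi < 0$.
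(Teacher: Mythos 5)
Most of your argument is sound: the $(AC3)$-sandwich for part (i) when $g\neq 0$ (including the iteration via Lemma \ref{lemmacutpoint}(i) to rule out $v_G(\psi(g))>v_G(g)$) is correct and pleasantly self-contained compared with the paper's route through a pre-existing regular cut point, and your subgroup argument for the subcase $g=0=\max\Psi$ works. But there are two genuine gaps. The first is in part (ii): your derivation of $\Psi\le 0$ is circular. Lemma \ref{positivepsicutpointLem}, including its ``in particular'' clause, is stated and proved under the standing hypothesis that a \emph{nonzero} cut point $c$ exists (its proof uses $v_G(\psi(c)+\vert c\vert)\geq v_G(c)$ and the comparison $v_G(c)>v_G(\psi(g))$). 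When $0$ is a cut point, Lemma \ref{lemmacutpoint}(iii) forces every cut point to be archimedean-equivalent to $0$, i.e.\ equal to $0$, so the hypothesis of Lemma \ref{positivepsicutpointLem} is precisely what fails and you cannot conclude that $\psi^2(h)$ is a nonzero cut point; the unconditional implication ``$\psi(h)>0\Rightarrow\psi^2(h)$ is a nonzero cut point'' is itself a consequence of the statement (ii) you are proving. A non-circular argument (the paper's) goes: if $\psi(g)>0$, then (AC3) gives $0<\psi(g)<D_G(\vert h\vert)$ for every $h\neq0$, and since $0$ is a cut point $D_G(\vert h\vert)\sim h$, so $v_G(\psi(g))\geq v_G(h)$ for every $h\neq 0$; taking $h=\psi(\psi(g))$ forces $\psi(\psi(g))=0$, and then (AC3) applied to the pair $(g,\psi(g))$ yields $\psi(g)<\psi(g)$.

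The second gap is that the subcase of (i) where $g=0$ is a gap is \emph{not} vacuous. Your argument for vacuity fails because (AC2) only gives $\vert\psi(h)\vert=\vert\psi(nh)\vert<\vert nh\vert=n\vert h\vert$, which is no constraint; in a non-archimedean group, $\vert\psi(h)\vert<\vert h\vert$ for all $h$ is perfectly compatible with $\psi$ taking only negative values. Concretely, on $G:=\Hahn_{n\in\Z}\R$ put $\psi(h):=-1_{v_G(h)+1}$, where $1_m$ denotes the element with coefficient $1$ at $m$ and $0$ elsewhere: this is an asymptotic couple with $\Psi<0$ and $D_G(h)\sim h$ of the same sign as $h$, so $\Psi<0<D_G(G^{>0})$ and $0$ is a gap. (More generally, any asymptotic couple in which $0$ is a cut point but $0\notin\Psi$ has $0$ as a gap.) So this subcase requires an actual proof; one option is the paper's global argument, which takes a regular cut point $c$ furnished by Proposition \ref{behaviourofDG} and shows that whenever $c\neq0$ the squeeze $\psi(c)\leq g<D_G(\vert h\vert)$ (with both ends archimedean-equivalent to $c$ and of its sign) forces $g\sim c\neq0$; contrapositively, $g=0$ forces $c=0$, and $0$ is then a regular cut point by definition.
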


\begin{proof}
For (ii):  Since $0$ is the cut point, Proposition \ref{behaviourofDG}(iv) implies that $D_G(h)\sim h$  for all $h\in G^{\neq0}$. This implies 
in particular that $0\notin D_G(G^{\neq0})$, which proves that $(G,\psi)$ does not have asymptotic integration.
Assume there is $g$ with 
$\psi(g)>0$. By (AC3), we  have for all $h\neq0$: $0<\psi(g)<D_G(\vert h\vert)$, which implies $v_G(\psi(g))\geq v_G(D_G(\vert h\vert))$.
Since $D_G(\vert h\vert)\sim h$, we thus have
 $v_G(\psi(g))\geq \vert h\vert$.
 It follows that $\psi(\psi(g))=0$ (otherwise, taking $h:=\psi(\psi(g))$ we would have $v_G(\psi(g))\geq v_G(\psi(\psi(g)))$, which contradicts the fact the $0$ is the cut point).
 But then by (AC3), 
$\psi(g)<\psi(\psi(g))+\psi(g)=\psi(g)$, a contradiction. Thus, $\sup\Psi\leq0$. For any $g<0$, since $0$ is a cut point we have 
$g<\psi(g)<0$, hence $\sup\Psi=0$.
 For (i): Let $c$ be a regular cut point for $\psi$. If $c=0$, then by (ii) we must have $g=0=c$. Assume $c\neq0$. 
 Assume first that $v_G(c)\neq\max\Gamma$. Then there is 
 $h\in G^{\neq0}$ such that $v_G(h)>v_G(c)$. By Proposition \ref{behaviourofDG}(ii), 
 $D_G(\vert h\vert)$ has the same sign as $c$ and $D_G(\vert h\vert)\sim c$.
 By assumption on $g$, we have $\psi(c)\leq g< D_G(\vert h\vert)$. It follows that 
 $g\sim c$ and that $g$ has the same sign as $c$. By Proposition 
 \ref{behaviourofDG}(i), $\psi(g)$ has the same sign as $c$ and $\psi(g)\sim c$. 
 It follows that $\psi(g)$ as the same sign as $g$ and $\psi(g)\sim g$, so 
 $g$ is a regular cut point by Lemma \ref{lemmacutpoint}(ii). Now assume that 
 $v_G(c)=\max(\Gamma)$. It follows from Proposition \ref{behaviourofDG}(i) that 
 $\psi(h)=\psi(c)$ for any $h$ with $h\sim c$. Take $h$ such that 
 $0<h\leq \vert\psi(c)\vert$. Then $h\sim c$, and 
 $D_G(h)=h+\psi(h)=h+\psi(c)$ has the same sign as $\psi(c)$.
  By assumption on $g$,  we have 
 $\psi(c)\leq g< D_G(h)$. It follows that $g\sim c$, hence $\psi(g)=\psi(c)$. In particular, $g$ has the same sign as 
 $\psi(g)$ and $g\sim\psi(g)$. This shows that $g$ is a regular cut point. 
\end{proof}

As we mentioned above, if a gap or a maximum of $\Psi$ exists, then  we can transform $\psi$ into a map which has $0$ as a cut point. This will become important in Section \ref{unfrksection}:
 \begin{Lem}\label{translatebygap}
  Assume that $c\in G$ is either a gap for $\psi$ or the maximum of $\Psi$. Define 
$\psi'(g):=\psi(g)-c$. Then $0$ is a cut point for $\psi'$.
 \end{Lem}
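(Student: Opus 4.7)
The plan is to unpack the definition: $0$ being a cut point for $\psi'$ means $v_G(\psi'(g)) > v_G(g) \iff v_G(0) > v_G(g)$ for all $g \in G^{\neq 0}$, and since $v_G(0) = \infty$, the right-hand condition is automatic. So the entire content of the lemma is the inequality
\[
v_G(\psi(g) - c) > v_G(g) \quad \text{for all } g \in G^{\neq 0},
\]
i.e., $\psi(g) - c$ is archimedean strictly smaller than $g$.

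First I would note that in both cases (whether $c$ is a gap or $c = \max \Psi$), we have $\psi(h) \leq c$ for all $h \in G^{\neq 0}$: this is immediate from the definition of a gap ($\Psi < c$) and from the definition of maximum. In particular $c - \psi(h) \geq 0$.

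The key step is to establish that $c - \psi(h) < n|h|$ for every $n \in \N$ and every $h \neq 0$. If $c$ is a gap, this is immediate: by definition $c < D_G(g')$ for every $g' > 0$, so taking $g' := n|h|$ and using $\psi(n|h|) = \psi(h)$ (axiom (AC2)) gives $c < \psi(h) + n|h|$. If $c = \max \Psi$, write $c = \psi(g_0)$ for some $g_0 \neq 0$ and apply (AC3) with this $g_0$ and the element $n|h|$: then $c = \psi(g_0) < \psi(n|h|) + n|h| = \psi(h) + n|h|$. Either way, $0 \leq c - \psi(h) < n|h|$ for all $n \in \N$, which is exactly the statement $v_G(c - \psi(h)) > v_G(h)$, with the convention that $v_G(0) = \infty$ covers the degenerate case $\psi(h) = c$.

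Since $v_G(\psi'(g)) = v_G(\psi(g) - c) = v_G(c - \psi(g)) > v_G(g)$ for every $g \neq 0$, the condition ``$v_G(\psi'(g)) > v_G(g)$'' holds universally, which is exactly the statement that $0$ is a cut point for $\psi'$. There is no serious obstacle here; the whole argument hinges on extracting the right instance of (AC3) (respectively of the gap inequality) applied to $n|h|$ rather than $h$, which is what converts the strict inequality into an archimedean strict inequality.
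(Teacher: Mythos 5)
Your reduction of the statement to ``$v_G(\psi(g)-c)>v_G(g)$ for all $g\in G^{\neq0}$'' is correct, as is the observation that $\psi(h)\leq c$ and the derivation of $c<\psi(h)+n\vert h\vert$ from the gap condition (resp.\ from (AC3)). The problem is the very last inference. With the paper's convention $v_G(g)\leq v_G(h)\Leftrightarrow\exists n\in\N,\ \vert h\vert\leq n\vert g\vert$, the statement $v_G(c-\psi(h))>v_G(h)$ is equivalent to $n(c-\psi(h))<\vert h\vert$ \emph{for all} $n$, i.e.\ the multiple must sit on $c-\psi(h)$, not on $\vert h\vert$. What you have proved, namely $0\leq c-\psi(h)<n\vert h\vert$ for all $n$, is no stronger than its $n=1$ instance $0\leq c-\psi(h)<\vert h\vert$, and that only yields $v_G(c-\psi(h))\geq v_G(h)$ (compare $a=1$, $b=2$ in $\Z$: $0\leq a<b$ but $v_G(a)=v_G(b)$). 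The strict inequality is exactly the content of the lemma --- if one only had $v_G(\psi'(h))=v_G(h)$ for some $h$, then by Lemma \ref{lemmacutpoint}(ii) that $h$, rather than $0$, would be a cut point for $\psi'$ --- so this is a genuine gap, not a cosmetic one.

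The gap is not trivially repairable along your lines: to get $n(c-\psi(h))<\vert h\vert$ you would want to apply the gap inequality (resp.\ (AC3)) to an element $g'>0$ with $ng'\leq\vert h\vert$ and $\psi(g')=\psi(h)$, which is immediate when $G$ is divisible (take $g'=\vert h\vert/n$ and use (AC2)) but requires extra work in general, since the lemma is stated for arbitrary asymptotic couples. The paper avoids the computation entirely: it observes that $0$ is itself a gap for $\psi'$ (or the maximum of $\Psi'=\Psi-c$), since $D_{G,\psi'}=D_G-c$, and then invokes Lemma \ref{gapiscutpointLem}(i), whose proof goes through the existence of a regular cut point (Proposition \ref{behaviourofDG}) rather than through a direct archimedean estimate. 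Either adopt that route, or supply the missing estimate $n(c-\psi(h))<\vert h\vert$ for all $n$ with an argument valid for non-divisible $G$.
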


 \begin{proof}
  Just note that $0$ is a gap for $\psi'$ or a maximum of $\Psi'$, so the claim follows from 
Lemma \ref{gapiscutpointLem}(i).
 \end{proof}

   We now want to focus on the case where $(G,\psi)$ is H-type. In that case, there is a canonical choice for a regular cut point, namely the fixpoint of $\psi$:
\begin{Lem}\label{psinegativeLem}
 Assume $(G,\psi)$ is H-type. Then $0$ is a cut point for $\psi$ if and only if $\psi$ has no fixpoint. If $\psi$ has a fixpoint,
 then it is unique and it is a regular cut point for $\psi$. Moreover, if $\psi$ has a negative fixpoint, then 
 $\psi$ only takes negative values.
\end{Lem}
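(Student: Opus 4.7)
The plan is to reduce each of the three assertions to the machinery already assembled in Lemma \ref{lemmacutpoint} and to axioms (AC1)--(ACH).

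For the equivalence in the first sentence, I would argue both directions directly. If $c$ is a fixpoint of $\psi$, then $v_G(\psi(c))=v_G(c)$, so the biconditional in Definition \ref{cutpointdef} fails at $g=c$ (with $v_G(0)=\infty$), meaning $0$ is not a cut point. Conversely, if $0$ is not a cut point, there must exist $g\in G^{\neq 0}$ with $v_G(g)\geq v_G(\psi(g))$. By Lemma \ref{lemmacutpoint}(iv), $\psi(g)$ is then a regular cut point, and in particular $\psi(g)\neq 0$ (as $0$ is not a cut point). Applying the H-type clause Lemma \ref{lemmacutpoint}(v) to the nonzero cut point $\psi(g)$ then produces a fixpoint $\psi(\psi(g))$ of $\psi$.

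For the existence part of the second sentence, observe that any fixpoint $c$ of $\psi$ satisfies $\psi(c)=c\sim c$, so by Lemma \ref{lemmacutpoint}(ii) $c$ is a cut point; it is regular because $c$ trivially has the same sign as $\psi(c)=c$. For uniqueness, suppose $c_1,c_2$ are both fixpoints. Both are cut points by the above, so Lemma \ref{lemmacutpoint}(iii) gives $c_1\sim c_2$. Since $(G,\psi)$ is H-type, $\psi$ is constant on archimedean classes of $G$, hence $c_1=\psi(c_1)=\psi(c_2)=c_2$.

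The last claim is where I expect the one trick. Assume $c<0$ is a fixpoint. Using (AC2) we have $\psi(g)=\psi(-g)$, so it suffices to show $\psi(g)<0$ for $g<0$. If $g\leq c$, then by (ACH) $\psi(g)\leq\psi(c)=c<0$. If instead $c<g<0$, I apply (AC3) with $h=c$: since $c<0$ we get
\[
\psi(g)<\psi(c)+|c|=c+(-c)=0,
\]
which is the required bound. The main (if minor) obstacle is recognising that (AC3), rather than monotonicity (ACH), is what forces negativity in the range $c<g<0$; monotonicity only gives the lower bound $c\leq\psi(g)$, and it is the axiom $\psi(g)<\psi(h)+|h|$ evaluated at the fixpoint that collapses the upper bound to $0$.
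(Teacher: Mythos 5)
Your proof is correct. The first two assertions are handled essentially as in the paper: the equivalence via Lemma \ref{lemmacutpoint}(iv)--(v) (the paper gets the fixpoint from a nonzero cut point via (v) directly, you pass through (iv) first, which amounts to the same thing), and existence/uniqueness of the fixpoint via Lemma \ref{lemmacutpoint}(ii)--(iii) together with constancy of $\psi$ on archimedean classes. Where you genuinely diverge is the final negativity claim. The paper splits according to whether $v_G(g)\geq v_G(c)$ or $v_G(c)>v_G(g)$ and, in the first case, invokes the sign-propagation statement of Proposition \ref{behaviourofDG}(i) for the regular cut point $c$; you instead split on $g\leq c$ versus $c<g<0$ and close the second case with (AC3) evaluated at $h=c$, which collapses the upper bound to $\psi(c)+|c|=0$. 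Your route is more elementary in that it bypasses Proposition \ref{behaviourofDG} entirely; in fact you could streamline it further, since (AC3) is quantified over \emph{all} pairs $g,h\in G^{\neq0}$, so $\psi(g)<\psi(c)+|c|=c+(-c)=0$ already holds for every $g\in G^{\neq0}$ with no case distinction and no appeal to (ACH). What the paper's version buys in exchange is that it reuses machinery (Proposition \ref{behaviourofDG}) that is needed elsewhere anyway and that also records the archimedean class of $\psi(g)$, not just its sign.
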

\begin{proof}
 If $0$ is a cut point then by definition $v_G(\psi(g))> v_G(g)$ holds for every $g\neq0$ so $\psi$ has no fixpoint. If $c$ is a cut point with $c\neq0$, then by 
Lemma \ref{lemmacutpoint}(v) $\psi(c)$ is a fixpoint of $\psi$. If $d$ is another fixpoint then by Lemma \ref{lemmacutpoint}(ii) it must be a cut point, so by Lemma 
\ref{lemmacutpoint}(iii) $d\sim\psi(c)$, but since $(G,\psi)$ is H-type this implies $\psi(\psi(c))=\psi(d)$ hence $\psi(c)=d$. This proves the 
uniqueness of the fixpoint.   If $c$ is the fixpoint of $G$, then we have in particular $c\sim\psi(c)$ which by Lemma \ref{lemmacutpoint}(ii) implies that $c$ is a cut point. 
Finally, assume $c<0$. Since $c=\psi(c)$ is a cut point, we have $\psi(g)<0$ for any $v_G(g)\geq v_G(c)$ by Proposition \ref{behaviourofDG}(i). Now take $g$ with 
$v_G(c)>v_G(g)$. Since $(G,\psi)$ is H-type, we have 
$\psi(g)\leq \psi(c)=c<0$. Thus, $\psi(g)$ is negative for every $g$.
\end{proof}

  Proposition \ref{behaviourofDG}(i) and (iv) allows us to describe the behavior of the map $\omega$:

\begin{Lem}\label{behaviouromegaLem}
 Let $(G,\psi)$ be a H-type asymptotic couple and 
denote by $\omega$ the map induced by $\psi$ on $\Gamma$. Let $c$ be a cut point for $\psi$ and 
$\alpha:=v_G(c)$. Then we have $\omega(\gamma)>\gamma$ for all $\gamma$ with $\gamma<\alpha$ and 
$\omega(\gamma)=\alpha$ for all $\gamma\in\Gamma$ with $\alpha\leq\gamma$.

\end{Lem}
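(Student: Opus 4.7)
The plan is to unpack the definition of $\omega$ and reduce the statement to one about $\psi$ on $G$, so that Proposition \ref{behaviourofDG} applies almost directly. First I would observe that, since $(G,\psi)$ is H-type, $\psi$ is constant on archimedean classes of $G$ and hence consistent with $v_G$ in the sense of Definition \ref{definducedmap}; this ensures that $\omega(v_G(g)) := v_G(\psi(g))$ is a well-defined map $\Gamma \to \Gamma$.

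Next, Proposition \ref{behaviourofDG} is stated for a \emph{regular} cut point, while the lemma gives us only an arbitrary cut point $c$. I would handle this by invoking Proposition \ref{behaviourofDG} to produce some regular cut point $c'$ and then using Lemma \ref{lemmacutpoint}(iii) to conclude $c' \sim c$, so that $v_G(c') = v_G(c) = \alpha$. (The degenerate case $c=0$, i.e.\ $\alpha=\infty$, is immediate: the second clause is vacuous, and being a cut point at $0$ means $v_G(\psi(g))>v_G(g)$ for every $g \neq 0$, giving $\omega(\gamma)>\gamma$ for all $\gamma\in\Gamma$.)

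With $c'$ in hand, I would split into two cases. For $\gamma<\alpha$, pick $g \in G^{\neq 0}$ with $v_G(g)=\gamma$; then $v_G(c')>v_G(g)$, so Proposition \ref{behaviourofDG}(iv) yields $v_G(\psi(g))>v_G(g)$, i.e.\ $\omega(\gamma)>\gamma$. For $\gamma \geq \alpha$, pick $g$ with $v_G(g)=\gamma$; then $v_G(g) \geq v_G(c')$, so Proposition \ref{behaviourofDG}(i) gives $\psi(g)\sim c'$, i.e.\ $v_G(\psi(g))=v_G(c')=\alpha$, hence $\omega(\gamma)=\alpha$.

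There is no real obstacle here, only the bookkeeping point above: Proposition \ref{behaviourofDG} is phrased for regular cut points, whereas the hypothesis of the lemma only supplies a cut point. The one-line swap via Lemma \ref{lemmacutpoint}(iii), together with separating off the trivial $c=0$ case, is the only thing to be careful about.
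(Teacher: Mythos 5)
Your proof is correct and follows exactly the route the paper intends: the paper states this lemma without an explicit proof, remarking only that it follows from Proposition \ref{behaviourofDG}(i) and (iv), which is precisely what you apply. Your added care in passing from the given cut point to a regular one via Lemma \ref{lemmacutpoint}(iii) (so that Proposition \ref{behaviourofDG} is applicable), and in separating off the degenerate case $c=0$, simply fills in the bookkeeping the paper leaves implicit.
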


  In the H-type case, the existence of a gap is part of  a trichotomy (see \cite[Lemma 2.4]{Gehret}): 
  \begin{Prop}\label{trichotomy}
   Let $(G,\psi)$ be a H-type asymptotic couple. Then exactly one of the following holds:
    \begin{enumerate}[(i)]
     \item $\psi$ has a gap
      \item $\Psi$ has a maximum
      \item $(G,\psi)$ admits asymptotic integration
    \end{enumerate}

  \end{Prop}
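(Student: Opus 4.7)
The plan is to handle mutual exclusivity and existence separately, relying heavily on the preceding lemma about gaps, maxima of $\Psi$, and the set $G\setminus D_G(G^{\neq 0})$.

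For mutual exclusivity, the ``in particular'' clause of the preceding lemma says that asymptotic integration forces $G\setminus D_G(G^{\neq 0})=\emptyset$, so (iii) immediately excludes both (i) and (ii). To rule out (i) and (ii) holding simultaneously, I would observe that a gap $g$ satisfies $\Psi<g$ strictly by definition, so $g\notin\Psi$; whereas $\max\Psi\in\Psi$. Yet the preceding lemma forces both $g$ and $\max\Psi$ to equal the unique element of $G\setminus D_G(G^{\neq 0})$, which is a contradiction.

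For existence, I would suppose $(G,\psi)$ lacks asymptotic integration. By the preceding lemma, $G\setminus D_G(G^{\neq 0})=\{g\}$ for a unique $g\in G$, and it then suffices to show $g$ is either a gap of $\psi$ or the maximum of $\Psi$. The location of $g$ is pinned down by three facts: (AC3) gives $\Psi<D_G(G^{>0})$; $D_G$ is strictly increasing on $G^{\neq 0}$ by \cite[Proposition 2.3(4)]{Aschdries2}; and the H-type assumption forces $\psi$ to be constant on archimedean classes, so $\psi(h)$ approaches $\sup\Psi$ in a controlled way as $|h|$ shrinks. Using these, I would show $g$ satisfies both $\psi(h)\leq g$ for all $h\neq 0$ and $g<D_G(h)$ for all $h>0$. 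Depending on whether $g\in\Psi$, this places $g$ in case (ii) (as $\max\Psi$) or case (i) (as a gap, since the second inequality is exactly the upper half of the gap condition).

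The main obstacle is the existence part, specifically the upper bound $\psi(h)\leq g$: given a hypothetical $h_0$ with $g<\psi(h_0)$, one must manufacture $h\in G^{\neq 0}$ with $D_G(h)=g$, contradicting $g\notin D_G(G^{\neq 0})$. The construction exploits the H-type property to interpolate: since $\psi$ is constant on archimedean classes, one can vary $h$ within a class to adjust $D_G(h)=\psi(h)+h$ continuously across values near $\psi(h_0)$. This is essentially the content of \cite[Lemma 2.4]{Gehret}, which can be invoked directly rather than re-derived in full detail here.
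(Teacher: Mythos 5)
The paper offers no proof of this proposition at all: it is stated as a quotation of \cite[Lemma 2.4]{Gehret} (which is itself the full trichotomy), so there is no internal argument to compare yours against. Your proposal is sound and in fact does more than the paper: the mutual-exclusivity half is a complete, self-contained deduction from the unnumbered lemma immediately preceding the proposition (asymptotic integration makes $G\backslash D_G(G^{\neq0})$ empty, hence excludes a gap and a maximum by part (iii); and a gap and a maximum of $\Psi$ would both have to equal the unique element of $G\backslash D_G(G^{\neq0})$ while lying respectively outside and inside $\Psi$), and this half does not even need the H-type hypothesis. For the existence half you correctly reduce to showing that the unique element $g$ of $G\backslash D_G(G^{\neq0})$ satisfies $\Psi\leq g<D_G(G^{>0})$, but the mechanism you sketch for the bound $\Psi\leq g$ --- ``adjusting $D_G(h)$ continuously'' within an archimedean class --- is only a heuristic: $\psi$ being constant on the class of $h_0$ lets you write $D_G(g-\psi(h_0))=\psi(g-\psi(h_0))+g-\psi(h_0)$, but one still has to verify that $g-\psi(h_0)$ lands in an archimedean class on which $\psi$ takes the value $\psi(h_0)$, which is where the real work in \cite{Aschdries2} and \cite{Gehret} lies. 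Since you ultimately invoke \cite[Lemma 2.4]{Gehret} for exactly this step --- the same citation the paper rests on --- there is no gap; just be aware that citing that lemma already yields the entire proposition, so your explicit exclusivity argument is a bonus rather than a necessity.
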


 Sometimes, it is practical to work with an asymptotic couple where $\psi$ only takes negative values. The next lemma will be useful in that 
regard, especially for the proof of Theorem \ref{diffrankcharbyqosThm}:

\begin{Lem}\label{translatetonegativeLem}
  Let $(G,\psi)$ be a H-type asymptotic couple. There exists $x\in G^{<0}$ such that, if $\hat{\psi}$ denotes the map 
  $\hat{\psi}:=\psi+x$, $\alpha:=v_G(x)$ and $\hat{\omega}$ is the map induced by $\hat{\psi}$ on $\Gamma$, the following holds: 
   \begin{enumerate}[(i)]
    \item The $\psi$-rank (respectively, the principal $\psi$-rank) of $(G,\leq)$ is equal to the 
$\hat{\psi}$-rank (respectively, the principal $\hat{\psi}$-rank) of $(G,\leq)$.
    \item $\hat{\psi}(g)<0$ for all $g\in G^{\neq0}$.
    \item for all $\gamma\in\Gamma$, $\hat{\omega}(\gamma)=\min(\alpha,\omega(\gamma))$.
    \item For any $\gamma,\delta\in\Gamma$, $\alpha\leq\gamma\wedge\alpha\leq\delta\Rightarrow\alpha\leq\omega(\gamma)=\omega(\delta)$.
     
   \end{enumerate}
 
 \end{Lem}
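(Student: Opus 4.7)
My plan is to pick $x$ as a negative element of $G$ in the archimedean class of a chosen regular cut point of $\psi$, with $\vert x\vert$ large enough that $\psi+x$ stays strictly below $0$. The argument splits according to the dichotomy of Lemma \ref{psinegativeLem}: either $\psi$ has a unique fixpoint $f\in G^{\neq0}$, or $0$ is the unique cut point of $\psi$.

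In the fixpoint case, I set $x:=-3\vert f\vert$, which lies in $G^{<0}$ with $\alpha:=v_G(x)=v_G(f)$. Property (ii) is immediate from (AC3), since $\psi(g)<\psi(f)+\vert f\vert=f+\vert f\vert\leq 2\vert f\vert$ yields $\hat\psi(g)<2\vert f\vert-3\vert f\vert=-\vert f\vert<0$. Property (iv) follows from Lemma \ref{behaviouromegaLem} applied to the cut point $c:=f$: $\omega(\gamma)=\alpha$ for all $\gamma\geq\alpha$, so both $\omega(\gamma)=\omega(\delta)=\alpha$ and $\omega(\gamma)\geq\alpha$ hold automatically for $\gamma,\delta\geq\alpha$. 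For property (iii), I compute $v_G(\psi(g)+x)$ by case analysis on the relative size of $v_G(\psi(g))$ and $\alpha$: the unequal cases reduce to standard ultrametric arithmetic and match $\min(\omega(\gamma),\alpha)$, while in the critical equal case Proposition \ref{behaviourofDG}(i) gives $\psi(g)\sim f$ with the same sign as $f$, and the strict bound $\psi(g)<2\vert f\vert$ combined with $\vert x\vert=3\vert f\vert$ keeps $\psi(g)+x$ inside the archimedean class of $f$ without cancellation.

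Property (i) is then obtained by observing that $c=f$ is still a cut point for $\hat\psi$: by Lemma \ref{lemmacutpoint}(ii) this reduces to $\hat\psi(c)=\psi(c)+x=f+x\sim c$, which holds since $x\sim f$. Lemma \ref{compatiblecontainsf} then forces every $\psi$- or $\hat\psi$-compatible convex subgroup $H$ to contain $c=f$ and hence $x$, so that $\psi(g)\in H\Leftrightarrow\psi(g)+x\in H$ for every $g$, identifying the two notions of compatibility on both convex and principally generated subgroups. In the fixpoint-free case, $0$ is the cut point and Lemma \ref{gapiscutpointLem}(ii) gives $\sup\Psi=0$, so any $x\in G^{<0}$ satisfies (ii); the analogous sign-based analysis (where now $\psi(g)\leq 0$ and $x<0$ both agree in sign) covers (iii), and (iv) is handled by choosing $\alpha=v_G(x)$ inside the final segment where $\omega$ stabilizes, using that in the $\max\Psi=0$ sub-case $\psi$ vanishes on a cofinal piece of $G$ by H-type and in the gap sub-case $\omega$ reaches a plateau from $\alpha$ onward. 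The main obstacle I anticipate is the positive-fixpoint part of (iii), where $\psi$ attains both positive and negative values; the remedy is precisely the factor of $3$ in $x:=-3\vert f\vert$, which is strictly larger than the (AC3) bound $2\vert f\vert$ and so prevents any accidental cancellation in $\psi(g)+x$.
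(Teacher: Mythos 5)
Your treatment of the two substantive cases is sound and closely parallels the paper's own proof. When $\psi$ has a fixpoint $f$ (necessarily a regular cut point), your choice $x:=-3\vert f\vert$ plays the same role as the paper's $x:=-2c$: the (AC3) bound $\psi(g)<\psi(f)+\vert f\vert\leq 2\vert f\vert$ gives (ii), the ultrametric case analysis plus the "no cancellation" estimate $\psi(g)+x<-\vert f\vert$ gives (iii), Lemma \ref{behaviouromegaLem} gives (iv), and the observation that $f$ remains a cut point for $\hat{\psi}$ together with Lemma \ref{compatiblecontainsf} gives (i). Likewise your $\max\Psi=0$ sub-case (choose $x<0$ with $\psi(x)=0$) matches the paper's Case 1.

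There is, however, a genuine gap in your fixpoint-free branch, precisely in the sub-case where $\Psi<0$ strictly (equivalently, $0$ is a gap for $\psi$). You assert that "$\omega$ reaches a plateau from $\alpha$ onward", but this is false in general, and in fact no $x\in G^{<0}$ can satisfy (iv) there. Concretely, take $\Gamma=\Z$, $G=\Hahn_{n\in\Z}\R$, and $\psi(g):=-1_{v_G(g)+1}$ (the element with entry $-1$ at position $v_G(g)+1$ and $0$ elsewhere); this is a H-type asymptotic couple with cut point $0$ and gap $0$, and $\omega(n)=n+1$ is injective, hence constant on no final segment of $\Z$. For any $x\in G^{<0}$ with $\alpha=v_G(x)=m\in\Z$, taking $\gamma=m$, $\delta=m+1$ violates (iv) since $\omega(\gamma)=m+1\neq m+2=\omega(\delta)$. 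The paper's proof sidesteps this by simply taking $x=0$ when $\Psi<0$ (so $\alpha=\infty$, making (iii) and (iv) vacuous), which is a tacit weakening of the statement from $x\in G^{<0}$ to $x\in G^{\leq0}$; note that in its only application (Theorem \ref{diffrankcharbyqosThm}) the lemma is invoked exclusively when $\Psi<0$ fails. To repair your proof you should do the same: dispose of the case $\Psi<0$ by $x=0$ (or restrict the statement to $\Psi\not<0$), and observe that when $\Psi\not<0$ your dichotomy reduces to exactly the two cases you handle correctly, namely $\max\Psi=0$ or a positive fixpoint.
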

\begin{proof}
 If $\psi(g)<0$ for all $g\neq0$ then  we can take $x=0$, so assume that there is $g$ with $\psi(g)\geq0$.
 We distinguish two cases:  
  \begin{enumerate}[1.]
   \item $0=\max\Psi$. In that case set $c=0$ and choose $x<0$ with $\psi(x)=0$.
    \item there exists $g$ with $\psi(g)>0$. In that case define $c$ as the fixpoint of $\psi$ which exists and is positive by Lemma \ref{psinegativeLem};
set $x:=-2c$.
  \end{enumerate}

Note that in both cases, $c$ is a cut-point for $\psi$, $x$ is a cut point for $\hat{\psi}$ (because $\hat{\psi}(x)\sim x$), $\psi(x)=c$ and $v_G(c)\geq v_G(x)$. 
Let us show (i). We  just have to show 
that for any convex subgroup $H$ of $G$, $H$ is $\psi$-compatible if and only if it is $\hat{\psi}$-compatible.
We use Lemma \ref{compatiblecontainsf}. Assume then that $H$ is $\psi$-compatible. Then by Lemma \ref{compatiblecontainsf} $c\in H$. Since $\psi(x)=c$, it follows by 
$\psi$-compatibility of $H$ that $x\in H$. Now take $g$ with $\hat{\psi}(g)\in H$.
We have $\psi(g)=\hat{\psi}(g)-x$, hence $\psi(g)\in H$. 
By $\psi$-compatibility, this
implies $g\in H$. This proves by Lemma \ref{compatiblecontainsf} that $H$ is $\hat{\psi}$-compatible. Conversely, assume that $H$ is 
$\hat{\psi}$-compatible. Then $x\in H$, and since $v_G(c)\geq v_G(x)$ it follows by convexity that $c\in H$. Now take $g\in G$ with 
$\psi(g)\in H$. Then $\hat{\psi}(g)=\psi(g)+x\in H$, which by $\hat{\psi}$-compatibility implies $g\in H$. It follows by Lemma \ref{compatiblecontainsf} that $H$ is 
$\psi$-compatible.

Now let us show (ii),(iii),(iv). We first consider case  1. In that case, $x$ is the fixpoint of $\hat{\psi}$, so (ii) follows from Lemma \ref{psinegativeLem}. Clearly, by definition of $\hat{\psi}$, 
$v_G(x)> v_G(\psi(g))\Rightarrow \psi(g)\sim\hat{\psi}(g)$. If $v_G(\psi(g))>v_G(x)$, then by definition of $\hat{\psi}$ we have 
$\hat{\psi}(g)\sim x$. Since $\Psi<0$, $\psi(g)$ and $x$ have the same sign, so $\psi(g)\sim x$ also implies $\hat{\psi}(g)\sim x$. 
This proves (iii). If $v_G(g)\geq v_G(x)$, (ACH) implies $\psi(g)\geq\psi(x)=0$, and since $0=\max\Psi$ it follows that $\psi(g)=0$, hence (iv). Now let us consider case 
2. By (AC3), we have $\psi(g)<c+\psi(c)=2c$ for all $g\in G^{\neq0}$, hence $\hat{\psi}(g)<0$, hence (ii). Clearly, by definition of 
$\hat{\psi}$, $v_G(x)>v_G(\psi(g))\Rightarrow \psi(g)\sim\hat{\psi}(g)$. Now assume  
$v_G(\psi(g))\geq v_G(x)$.  By Proposition \ref{behaviourofDG}(i), 
$v_G(\psi(g)-c)>v_G(c)$, hence $\hat{\psi}(g)=\psi(g)-2c\sim c\sim x$. This proves (iii). Now take $g,h$ with $v_G(g)\geq v_G(x)$ and $v_G(h)\geq v_G(x)$. Since $x\sim c$, 
Proposition \ref{behaviourofDG}(i) implies $\psi(g)\sim c\sim x\sim \psi(h)$, hence (iv).
\end{proof}

\section{The differential rank}\label{differentialranksection}
 
   This section introduces and studies the differential rank. We first want to recall the definitions of the 
   different classes of fields which we are interested in.

 We recall that, following 
   Rosenlicht's definition (see \cite{Rosenlicht}), a \textbf{differential-valued field} is a triple $(K,v,D)$, where 
   $v$ is a field valuation on $K$ and $D$ a derivation such that the following is satisfied:
   \begin{enumerate}
    \item[(DV1)] $\ringv=\idealv+\mathcal{C}$, where $\mathcal{C}$ is the field of constants of $(K,D)$.
    \item[(DV2)] If $a\in\ringv,b\in\idealv$ and $b\neq0$, then $v(D(a))>v(\frac{D(b)}{b})$.
   \end{enumerate}

   If $(K,v,D)$ only satisfies (DV2), then we say that it is a \textbf{pre-differential-valued field}. Note that (DV2) implies that $v$ is trivial on $\constants$.
   If $(K,v,D)$ is a pre-differential valued field, we will denote by $\phi$ its logarithmic derivative restricted to elements of non-trivial valuation, i.e 
   $\phi: K\backslash(\unitsv\cup\{0\})\to K$, $a\mapsto\frac{D(a)}{a}$. Note that if $(K,v,D)$ is a pre-differential-valued field and $a\in K$, $a\neq0$, then 
$(K,v,aD)$ is also a pre-differential-valued field, where $aD$ denotes the derivation $b\mapsto aD(b)$.
   The notion of pre-differential-valued field was 
   introduced by Aschenbrenner and v.d.Dries  in \cite{Aschdries2}, where 
they showed in particular that any pre-differential-valued field can be embedded into a differential-valued field. 
A pre-differential-valued field
$(K,v,D)$ is said to have \textbf{asymptotic integration} if for any $a\in K$, there exists 
$b\in K$ such that $v(D(b)-a)>v(a)$. F.-V. Kuhlmann showed in \cite{FVKuhlmanndifffields} that if $K$ is spherically complete and has asymptotic integration, then it has integration.

 In \cite{Aschdries2}, the authors also introduced a class of  differential-valued fields called H-fields which turn out to be 
particularly significant for the theory of transseries and the model-theoretic study of Hardy fields. They also introduced the weaker notion of pre-H-field and showed that any pre-H-field can be embedded 
into a H-field. We recall their definition: 
A \textbf{pre-H-field} is a valued ordered differential field $(K,v,\leq,D)$ such that :
\begin{enumerate}
 \item[(PH1)] $(K,v,D)$ is a pre-differential-valued field
 \item[(PH2)] $\ringv$ is $\leq$-convex.
 \item[(PH3)] for all $a\in K,a>\ringv\Rightarrow D(a)>0$.
\end{enumerate}

\begin{Def}\label{Hfielddef}
 An \textbf{H-field} is an ordered differential field $(K,\leq,D)$ such that 
    \begin{enumerate}
     \item[(H1)] $(K,v,\leq,D)$ is a pre-H-field, where $\ringv:=\{a\in K\mid\exists c\in\constants, \vert a\vert\leq c\}$.
     \item[(H2)] $\ringv=\constants+\idealv$
    \end{enumerate}
\end{Def}

    If $(K,v,D)$ is a pre-differential-valued field, then axiom (DV2) implies that the logarithmic derivative $\phi$
   is consistent with $v$, so it
  induces  a map $\psi: G^{\neq0}\to G$, where $G$ is the value group of $(K,v)$, and the pair $(G,\psi)$ turns out to be an asymptotic couple.
  If $(K,v,\leq,D)$ is a pre-H-field, then $(G,\psi)$ is H-type.
  Note  that if $(K,v,D)$ is a pre-differential-valued field 
  with asymptotic couple $(G,\psi)$ and if $a\in K$, then the pre-differential-valued field $(K,v,aD)$ has
  asymptotic couple $(G,\psi')$, where $\psi'$ is the map $g\mapsto \psi(g)+v(a)$.  
  One can easily check that a pre-differential-valued field has asymptotic integration if and only if its asymptotic couple has it.

  In all the rest of this section, $(K,v,D)$ will be a pre-differential-valued field whose field of constants is $\constants$ and $(G,\psi)$ is its asymptotic couple. 

\subsection{Characterization of the differential rank}\label{diffranksubsec}

 Applying Section \ref{phiranksection} to the special case of pre-differential-valued fields, we introduce the notion of differential rank: 
 \begin{Def}\label{Diffrank}
     The differential rank (respectively, the principal differential rank) of the pre-differential-valued field $(K,v,D)$ is the 
     $\phi$-rank (respectively, the principal $\phi$-rank) of the valued field $(K,v)$, where $\phi$ is the map defined on 
     $K^{\neq0}\backslash\unitsv$ by $\phi(a)=\frac{D(a)}{a}$.     
    \end{Def}
 
   Proposition \ref{fieldgroupchain} then allows us to characterize the differential rank at three different levels: 
  \begin{Thm}\label{diffrankthreelevelsprop}
   Let $(K,v,D)$ be a pre-differential-valued field with asymptotic couple $(G,\psi)$. Then the differential rank (respectively, the principal differential rank) 
of $(K,v,D)$ is equal to the $\psi$-rank (respectively, the principal $\psi$-rank)  of the ordered abelian group $G$. Moreover, if $(G,\psi)$ happens to be H-type, then 
the differential rank (respectively, the principal differential rank) of $(K,v,D)$  is also equal to the $\omega$-rank (respectively, the principal $\omega$-rank) of $\Gamma$, where $\Gamma$ is the value chain of $G$ and 
$\omega$ is the map induced by $\psi$ on $\Gamma$. 
  \end{Thm}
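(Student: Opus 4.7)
The plan is to reduce everything to Proposition \ref{fieldgroupchain} applied with $\phi$ taken to be the logarithmic derivative restricted to $K^{\neq 0}\setminus\unitsv$. Since by Definition \ref{Diffrank} the (principal) $\phi$-rank of $(K,v)$ is precisely the (principal) differential rank of $(K,v,D)$, the task reduces to verifying the consistency hypotheses of that proposition and identifying the induced partial maps $\phi_G$ and $\phi_\Gamma$ with $\psi$ and $\omega$ respectively.

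First I would verify that $\phi$ is consistent with $v$. Given $a,b\in\dom\phi$ with $v(a)=v(b)\neq 0$, I write $a=bu$ for some $u\in\unitsv$. The Leibniz rule then gives $\phi(a)=\phi(b)+D(u)/u$. If $b\in\idealv$, axiom (DV2) applied to $u\in\ringv$ and $b$ yields $v(D(u))>v(\phi(b))$, and since $v(u)=0$ this becomes $v(D(u)/u)>v(\phi(b))$, whence $v(\phi(a))=v(\phi(b))$. If instead $v(b)<0$, the identity $\phi(1/x)=-\phi(x)$ (immediate from $D(1/x)=-D(x)/x^2$) reduces the claim to the previous case applied to $1/a$ and $1/b$. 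This shows consistency, and by the very construction of the asymptotic couple associated to $(K,v,D)$ the induced partial map $\phi_G$ on $G$ coincides with $\psi$. The first part of Proposition \ref{fieldgroupchain} then gives the equality of the (principal) differential rank of $(K,v,D)$ with the (principal) $\psi$-rank of $(G,\leq)$.

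For the H-type refinement it remains to check that $\psi$ is consistent with $v_G$. This is exactly the property, recalled at the beginning of Section \ref{acsubsection}, that $\psi$ is constant on archimedean classes of $G^{\neq 0}$ whenever $(G,\psi)$ is H-type: given $g\sim h$ one reduces via $\psi(-g)=\psi(g)$ to $g,h<0$ with, say, $|g|\geq |h|$, whereupon archimedean equivalence provides $n\in\N$ with $nh\leq g\leq h<0$, and (ACH) combined with $\psi(nh)=\psi(h)$ from (AC2) forces $\psi(g)=\psi(h)$. The induced partial map $\phi_\Gamma=\psi_\Gamma$ on $\Gamma$ is then $\omega$ by Definition \ref{definducedmap}, and the second half of Proposition \ref{fieldgroupchain} completes the argument. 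The only step needing genuine care is the consistency check for $\phi$, and even this is routine once one splits on the sign of $v(b)$; everything else is a matter of tracing through the definitions.
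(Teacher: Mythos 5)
Your proposal is correct and follows exactly the paper's route: the paper presents this theorem as an immediate application of Proposition \ref{fieldgroupchain}, relying on the earlier (unproved) assertions that (DV2) makes $\phi$ consistent with $v$ and that H-type asymptotic couples have $\psi$ constant on archimedean classes. You have merely supplied the routine verifications of those two consistency facts, which the paper omits.
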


 We now want to express the differential rank as the rank of some quasi-ordered set. This will give us a 
 differential analog of  \cite[Theorem 5.3, Corollary 5.4 and Corollary 5.5]{KuhlmannPointMatusinski}. 
We mentioned in Remark \ref{qoremark} that applying our Proposition \ref{phiqoprop} recovers the results of  \cite{KuhlmannPointMatusinski}, so our idea 
is to apply Proposition \ref{phiqoprop} to the differential case to obtain similar results. One difficulty here is that, even assuming that $(G,\psi)$ is 
H-type, the maps $\phi,\psi$ and $\omega$ are not increasing. We still managed to obtain similar results to those in \cite{KuhlmannPointMatusinski}, as Theorem 
\ref{diffrankcharbyqosThm} below shows. The idea is to remark that, if $\psi$ is H-type and only takes negative values, then $\phi,\psi,\omega$ are increasing, which allows us to apply 
 Proposition \ref{phiqoprop}. If $\psi$ takes non-negative values, one can use Lemma \ref{translatetonegativeLem} which brings back to the case where $\psi$ only takes negative values.\\

  Similarly to what was done in \cite{KuhlmannPointMatusinski},  we define the set $P_K:=K\backslash\ringv$. We now introduce three binary relations 
  $\qophi,\precsim_{\psi},\precsim_{\omega}$ respectively defined on $P_K, G^{<0}$ and $\Gamma$ as follows:\\

  $a\qophi b\Leftrightarrow \exists n,k\in\N_0, v(\phi^n(a))\leq v(\phi^k(b))$\\

	      $g\precsim_{\psi}h\Leftrightarrow \exists n,k\in\N_0,\psi^n(g)\leq\psi^k(h)$\\

	    $\gamma\precsim_{\omega} \delta\Leftrightarrow\exists n,k\in\N_0,\omega^n(\gamma)\leq\omega^k(\delta)$\\

         Three important remarks are in order: first, it is not obvious from their definitions that these relations are quasi-orders, but we will show it in 
  Theorem \ref{diffrankcharbyqosThm}. Secondly, note that it can happen that $\phi(a)\notin P_K$ when $a\in P_K$, which is also a reason why 
we cannot apply Proposition \ref{phiqoprop} directly ($\phi$ was assumed to be a map from $A$ to itself in Section \ref{phiranksection}). Thirdly, 
it can happen that the term $\phi^n(a)$ is not well-defined for a certain $n\in\N$: indeed, remember that the domain of $\phi$ is 
$K^{\neq0}\backslash\unitsv$. Thus, if $v(\phi(a))=0$, $\phi^2(a)$ is not well-defined. Therefore, when we write 
$\phi^n(g)$ it is always implicitly assumed that this expression is defined, i.e expressions like ``$\phi^n(g)\leq\phi^k(h)$'' should be read as 
``$\phi^n(g),\phi^k(g)$ both exist and $\phi^n(g)\leq\phi^k(h)$ holds''. Note that if we allowed the domain of 
$\phi$ to be $K^{\neq0}$ then $\qophi$ would not be a quasi-order, and Theorem \ref{diffrankcharbyqosThm} would fail. 
Similar remarks apply to $\psi$ and $\omega$: $G^{<0}$ is not necessarily stable under $\psi$, $\Gamma$ is not necessarily stable under $\omega$ (we can have 
$\omega(\gamma)=\infty$),
$\psi^2(g)$ is not well-defined if $\psi(g)=0$ and $\omega^2(\gamma)$ is not well-defined if $\omega(\gamma)=\infty$.
 Note however that for any $a\in\dom\phi$, $\phi^n(a)$ is well-defined if and only if $\psi^n(v(a))$ is, and that we then have 
$v(\phi^n(a))=\psi^n(v(a))$. Similarly, for any $g\in G^{<0}$, $\psi^n(g)$ is well-defined if and only if $\omega^n(v_G(g))$ is, in which case 
$v_G(\psi^n(g))=\omega^n(v_G(g))$ holds. 
 As a consequence, we have the following lemma:

\begin{Lem}\label{qothreelevelsLem}
 Assume that $(G,\psi)$ is H-type. 
 For all $a,b\in P_K$, $a\qophi b\Leftrightarrow v(a)\precsim_{\psi}v(b)$.
 For all $g,h\in G^{<0}$, $g\precsim_{\psi}h\Leftrightarrow v_G(g)\precsim_{\omega}v_G(h)$. 
\end{Lem}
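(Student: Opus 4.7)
The first equivalence is an immediate unwinding of the definitions, using the identity $v(\phi^n(a)) = \psi^n(v(a))$ noted in the paragraph preceding the lemma (together with the matching condition that one iterate is defined iff the other is). Since $a, b \in P_K$ forces $v(a), v(b) \in G^{<0}$, on which $\precsim_{\psi}$ is defined, one has
\[
a \qophi b \iff \exists n, k : v(\phi^n(a)) \leq v(\phi^k(b)) \iff \exists n, k : \psi^n(v(a)) \leq \psi^k(v(b)) \iff v(a) \precsim_{\psi} v(b).
\]

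The second equivalence is more delicate, because $\precsim_{\psi}$ is defined via the order on $G$ while $\precsim_{\omega}$ uses the order on $\Gamma$. Using the analogous identity $v_G(\psi^n(g)) = \omega^n(v_G(g))$, the condition $v_G(g) \precsim_{\omega} v_G(h)$ becomes $\exists n, k : v_G(\psi^n(g)) \leq v_G(\psi^k(h))$, so the claim reduces to showing that for $g, h \in G^{<0}$,
\[
\exists n, k : \psi^n(g) \leq \psi^k(h) \text{ in } G \iff \exists n, k : v_G(\psi^n(g)) \leq v_G(\psi^k(h)) \text{ in } \Gamma.
\]
The key tools I would use are two consequences of the H-type hypothesis: (i) $\psi$ is constant on archimedean classes, which forces $\psi^{n+1}(g) = \psi^{k+1}(h)$ whenever $v_G(\psi^n(g)) = v_G(\psi^k(h))$ (so the $G$-order inequality holds trivially with $n, k$ incremented by one); and (ii) in the strict case $v_G(\psi^n(g)) < v_G(\psi^k(h))$, $\psi^n(g)$ lies in a strictly larger archimedean class than $\psi^k(h)$, so whether $\psi^n(g) \leq \psi^k(h)$ holds is governed entirely by the sign of $\psi^n(g)$: a negative sign yields $\psi^n(g) \ll \psi^k(h)$ and hence $\leq$, while a positive sign gives the reverse inequality.

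To control signs I would invoke the cut-point analysis of Section \ref{acsubsection}: Proposition \ref{behaviourofDG}(i) and Lemma \ref{positivepsicutpointLem} show that $\psi^n(g)$ is negative whenever $v_G(\psi^n(g)) < \alpha := v_G(c)$, while $\psi^n(g)$ inherits the sign of the regular cut point $c$ once $v_G(\psi^{n-1}(g)) \geq \alpha$. Lemma \ref{behaviouromegaLem} tracks the evolution of $\omega^n(v_G(g))$ towards $\alpha$, so after enough iterations the signs of $\psi^n(g)$ and $\psi^k(h)$ stabilise into a configuration to which the case split above applies. The main obstacle is the case $c > 0$, where iterates may take either sign depending on whether $v_G(\psi^n(g))$ has reached $\alpha$; the cleanest organisation is a case split based on whether $v_G(\psi^n(g))$ and $v_G(\psi^k(h))$ lie below, at, or above $\alpha$, applying H-type constancy at the cut-point class, direct archimedean comparison below it, and Proposition \ref{behaviourofDG}(i) in the mixed case to match the two sides of the equivalence in each configuration.
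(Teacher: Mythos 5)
Your proposal is correct and follows essentially the same route as the paper: the first equivalence by unwinding the identity $v(\phi^n(a))=\psi^n(v(a))$, and the second via a case split around a regular cut point $c$, using Lemma \ref{positivepsicutpointLem} and Proposition \ref{behaviourofDG}(i) to control signs below and above $v_G(c)$, and the H-type constancy of $\psi$ on archimedean classes to turn equality of $v_G$-values into equality of the next iterates. The paper's proof is just the fully written-out version of the case analysis you outline (including the easy sub-cases where $g<0\leq\psi^k(h)$ settles the inequality outright).
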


\begin{proof}The first statement follows directly from the definitions of $\precsim_{\psi}$ and $\qophi$.
Note however that the image of $\psi$ may contain positive elements, and that $v_G$ reverses the order on $G^{>0}$, so the second 
statement is not trivial. Let us now prove the second statement.
 Let $c$ be a regular cut point for $\psi$. Take $g,h\in G^{<0}$ and set $\gamma:=v_G(g)$ and $\delta:=v_G(h)$. 
 We first show that $g\precsim_{\psi}h\Rightarrow \gamma\precsim_{\omega}\delta$.
 Assume $g\precsim_{\psi}h$ and let $n,k\in\N_0$ with 
$\psi^n(g)\leq\psi^k(h)$. If $\psi^k(h)\leq 0$, then this implies $v_G(\psi^n(g))\leq v_G(\psi^k(h))$, hence 
$\omega^n(\gamma)\leq\omega^k(\delta)$, hence $\gamma\precsim_{\omega}\delta$. Assume that  
$0<\psi^k(h)$. Then by Lemma \ref{positivepsicutpointLem}, $\psi^{k+1}(h)$ is a non-zero cut point for 
$\psi$. By Lemma \ref{lemmacutpoint}(iii), this implies $\psi^{k+1}(h)\sim c$. It follows that $c\neq0$ and $v_G(c)=\omega^{k+1}(\delta)$. 
If $\psi^n(g)=0$, then in particular $n\neq0$. Moreover, 
it follows from Proposition \ref{behaviourofDG}(i)
that $v_G(c)>v_G(\psi^{n-1}(g))$, hence 
$\omega^{n-1}(\gamma)\leq v_G(c)=\omega^{k+1}(\delta)$.
If $v_G(c)>v_G(\psi^n(g))$, then 
$\omega^n(\gamma)\leq\omega^{k+1}(\delta)$. If $0\neq\psi^n(g)$ and $v_G(\psi^n(g))\geq v_G(c)$, then Proposition \ref{behaviourofDG}(i) implies 
$\psi^{n+1}(g)\sim c$, hence 
$\omega^{n+1}(\gamma)=\omega^{k+1}(\delta)$. In any case, we have $\gamma\precsim_{\omega}\delta$. This proves 
$g\precsim_{\psi}h\Rightarrow \gamma\precsim_{\omega}\delta$, let us now prove the converse.
 Assume that $\gamma\precsim_{\omega}\delta$ holds and take 
$n,k\in\N_0$ with $\omega^n(\gamma)\leq\omega^k(\delta)$. This implies 
$v_G(\psi^k(h))\geq v_G(\psi^n(g))$. If $v_G(c)>v_G(\psi^k(h))$, then by Lemma \ref{positivepsicutpointLem}
$\psi^n(g)$ and $\psi^k(h)$ are both negative. Moreover, it follows from the definition of cut point that 
 $v_G(\psi^{k+1}(h))>v_G(\psi^k(h))$, so we have $v_G(\psi^{k+1}(h))>v_G(\psi^n(g))$. Since $\psi^n(g)<0$, this implies 
$\psi^n(g)<\psi^{k+1}(h)$, hence $g\precsim_{\psi}h$. Now assume  that 
$v_G(\psi^k(h))\geq v_G(c)$. If $v_G(c)>v_G(g)$, then $v_G(\psi^k(h))>v_G(g)$, and since $g<0$ this implies $g<\psi^k(h)$, so $g\precsim_{\psi}h$.
Similarly, if $\psi^k(h)\geq0$, then $g<\psi^k(h)$, so $g\precsim_{\psi}h$. Assume then that 
$\psi^k(h)<0$ and $v_G(g)\geq v_G(c)$.  By Proposition \ref{behaviourofDG}(i), $v_G(\psi^k(h))\geq v_G(c)$ and $v_G(g)\geq v_G(c)$ imply
 $\psi^{k+1}(h)\sim c\sim\psi(g)$. If $c=0$, then this implies 
 $\psi^{k+1}(h)=0=\psi(g)$, hence $g\precsim_{\psi}h$. If $c\neq0$, then
 because $(G,\psi)$ is H-type, the relations $\psi^{k+1}(h)\sim c\sim\psi(g)$ imply 
 $\psi^{k+2}(h)=\psi(c)=\psi^2(g)$, hence $g\precsim_{\psi}h$. In any case, we have
 $g\precsim_{\psi}h$.
\end{proof}

    \begin{Thm}\label{diffrankcharbyqosThm}
     Let $(K,v,D)$ be a pre-differential-valued field with asymptotic couple $(G,\psi)$, assume that $(G,\psi)$ is H-type 
and denote by $\omega$ the map induced by $\psi$ on $\Gamma$.
Then the differential rank (respectively, the principal differential rank) of $(K,v,D)$ is equal to the rank (respectively the principal rank) of each one of these q.o sets:
\begin{enumerate}[(1)]
 \item The q.o set $(P_K,\precsim_{\phi})$.
 \item The q.o set $(G^{<0},\precsim_{\psi})$.
 \item The q.o set $(\Gamma,\precsim_{\omega})$.
\end{enumerate}
    \end{Thm}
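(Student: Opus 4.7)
The plan is to identify the three q.o.\ sets via Lemma \ref{qothreelevelsLem}, equate their common rank with the differential rank using a direct argument about final segments, and verify the quasi-order property by reducing to a well-behaved case via Lemma \ref{translatetonegativeLem}.

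First, by Lemma \ref{qothreelevelsLem}, the maps $v\colon P_K \to G^{<0}$ and $v_G\colon G^{<0} \to \Gamma$ descend to order-isomorphisms of the $\sim$-quotients of $(P_K, \precsim_{\phi})$, $(G^{<0}, \precsim_{\psi})$, and $(\Gamma, \precsim_{\omega})$, so these three q.o.\ sets share the same rank and principal rank (once the three relations are verified to be q.o.'s). It therefore suffices to identify the rank of $(\Gamma, \precsim_{\omega})$ with the differential rank.

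Second, I would show, without any monotonicity assumption on $\psi$, that the $\precsim_{\omega}$-closed subsets of $\Gamma$ are precisely the $\omega$-compatible $\leq$-final segments. A $\precsim_{\omega}$-closed subset is $\leq$-final (since $\leq\,\subseteq\,\precsim_{\omega}$) and $\omega$-compatible (since $\gamma \sim_{\omega} \omega(\gamma)$ whenever $\omega(\gamma)$ is defined, by taking $n=1,k=0$ and then $n=0,k=1$ in the definition). Conversely, given an $\omega$-compatible $\leq$-final segment $S$, $\gamma \in S$, and $\gamma \precsim_{\omega} \delta$ witnessed by $\omega^n(\gamma) \leq \omega^k(\delta)$, iterated $\omega$-compatibility gives $\omega^n(\gamma) \in S$, then the $\leq$-final property gives $\omega^k(\delta) \in S$, and iterated $\omega$-compatibility once more yields $\delta \in S$. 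Combined with Theorem \ref{diffrankthreelevelsprop}, the order-type of $\precsim_{\omega}$-closed subsets equals the differential rank.

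Third, I would verify that $\precsim_{\omega}$ is itself a quasi-order (reflexivity and totality being immediate). In the special case where $\psi$ takes only negative values, (ACH), (AC2) and the order-reversing behavior of $v_G$ on $G^{<0}$ make $\omega\colon \Gamma \to \Gamma$ total and increasing on $(\Gamma,\leq)$, so Proposition \ref{phiqoprop} gives transitivity directly. In general, Lemma \ref{translatetonegativeLem} yields $x \in G^{<0}$ with $\hat{\psi} := \psi + x$ taking only negative values; picking $a \in K^{*}$ with $v(a) = x$, the pre-differential-valued field $(K, v, aD)$ has asymptotic couple $(G, \hat{\psi})$ and the same differential rank as $(K, v, D)$ (by Lemma \ref{translatetonegativeLem}(i) and Theorem \ref{diffrankthreelevelsprop}), so the previous case applies and gives that $\precsim_{\hat{\omega}}$ is a q.o. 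Proposition \ref{fieldgroupchain} combined with Lemma \ref{translatetonegativeLem}(i) shows that the $\omega$- and $\hat{\omega}$-compatible $\leq$-final segments of $\Gamma$ coincide (both correspond to the same convex subgroups of $G$), so by the identification of the second paragraph, the $\precsim_{\omega}$- and $\precsim_{\hat{\omega}}$-closed sets agree as collections; a case analysis on the cut-point structure (using Lemma \ref{behaviouromegaLem} and Lemma \ref{translatetonegativeLem}(iii), (iv)) then shows $\precsim_{\omega} = \precsim_{\hat{\omega}}$ as relations, transferring transitivity.

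The main obstacle is this final identification and transitivity verification: since $\omega$ may be neither monotone nor total when $\psi$ takes non-negative values, Proposition \ref{phiqoprop} does not apply directly to $\omega$. The identification of $\precsim_{\omega}$-closed sets with $\omega$-compatible $\leq$-final segments (which holds without monotonicity) combined with the reduction via Lemma \ref{translatetonegativeLem} sets up the comparison, but the equality $\precsim_{\omega} = \precsim_{\hat{\omega}}$ itself requires careful tracking of $\omega$- versus $\hat{\omega}$-iterates using the cut-point description given in Lemma \ref{behaviouromegaLem}.
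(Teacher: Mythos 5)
Your proposal is correct and follows essentially the same route as the paper: reduce to the level of $\Gamma$ via Lemma \ref{qothreelevelsLem}, settle the case $\Psi<0$ by applying Proposition \ref{phiqoprop} to the then total and increasing map $\omega$, and reduce the general case to it via Lemma \ref{translatetonegativeLem} by proving $\precsim_{\omega}=\precsim_{\hat{\omega}}$ as relations --- which is exactly the case analysis (organized around a claim extracted from Lemma \ref{translatetonegativeLem}(iii),(iv)) that the paper carries out in detail and that you correctly identify, with the right tools, but leave as a sketch. Your direct identification of the final segments of $(\Gamma,\precsim_{\omega})$ with the $\omega$-compatible final segments of $(\Gamma,\leq)$, valid without any monotonicity of $\omega$, is a sound local simplification, but it does not alter the overall structure, since transitivity of $\precsim_{\omega}$ (needed for the statement itself and for the principal rank) still forces the reduction to the negative-valued case.
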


     \begin{proof}
     Note that thanks to Lemma \ref{qothreelevelsLem}, it is sufficient to prove (3).
Let us show (3). By Theorem \ref{diffrankthreelevelsprop}, the (principal) differential rank of $(K,v,D)$ is equal to the (principal) $\omega$-rank of $(\Gamma,\leq)$. Therefore, we just have to show 
that $\precsim_{\omega}$ is a q.o and that the (principal) rank of $(\Gamma,\precsim_{\omega})$ is equal to the (principal)  $\omega$-rank of $(\Gamma,\leq)$. We first assume that we have $\Psi<0$. In that case,
$\omega$ is an
increasing total map from $\Gamma$ to $\Gamma$. Indeed, $\Psi<0$ clearly implies $\infty\notin\omega(\Gamma)$. Moreover, if $\gamma:=v_G(g),\delta:=v_G(h)\in\Gamma$ and $g,h\in G^{<0}$ are such that 
$\gamma<\delta$, then we have $g<h$. Since $(G,\psi)$ is H-type, it follows that 
$\psi(g)\leq\psi(h)$ and by assumption $\psi(g)$ and $\psi(h)$ are negative, so we must have 
$v_G(\psi(g))\leq v_G(\psi(h))$ i.e $\omega(\gamma)\leq\omega(\delta)$. Thus, we can apply 
Proposition \ref{phiqoprop}, which states that the (principal) $\omega$-rank of $\Gamma$ is equal to the (principal) rank of the q.o set $(\Gamma,\precsim_{\omega})$, so (3) holds.
Now assume that the condition $\Psi<0$ is not satisfied. Let $x,\alpha,\hat{\psi},\hat{\omega}$ be as in Lemma 
\ref{translatetonegativeLem}. We know from Lemma \ref{translatetonegativeLem} that 
$\hat{\Psi}<0$, so we know by what we just proved that the 
(principal) $\hat{\omega}$-rank of $\Gamma$ is equal to the (principal) rank of the q.o set $(\Gamma,\qohom)$. 
Moreover, it follows from  Lemma 
\ref{translatetonegativeLem} and Theorem \ref{diffrankthreelevelsprop}  that 
the (principal) $\hat{\omega}$-rank of $(\Gamma,\leq)$ is equal to the (principal) $\omega$-rank of $(\Gamma,\leq)$. Therefore
it only remains to show  that 
 $\qohom$ and $\precsim_{\omega}$ define the same relation on $\Gamma$.
We use the following claim which follows directly from Lemma \ref{translatetonegativeLem}(iii) and (iv):
\begin{Claim*}
 Let $\gamma\in\Gamma$ and $n\in\N_0$. If $\hat{\omega}^n(\gamma)<\alpha$ or $\omega^n(\gamma)<\alpha$, then 
$\omega^l(\gamma)=\hat{\omega}^l(\gamma)<\alpha$ for every $l\leq n$.
\end{Claim*}

    Assume $\gamma\precsim_{\omega}\delta$ and take $n,k\in\N_0$ with 
$\omega^n(\gamma)\leq\omega^k(\delta)$. If $\omega^k(\delta)<\alpha$ then also $\omega^n(\gamma)<\alpha$ and it follows from the claim that 
$\hat{\omega}^n(\gamma)=\omega^n(\gamma)\leq\omega^k(\delta)=\hat{\omega}^k(\delta)$. 
Assume $\alpha\leq\omega^k(\delta)$. Since $\alpha=\max\hat{\omega}(\Gamma)$, we have 
$\hat{\omega}(\gamma)\leq\alpha$. 
Let $l\leq k$ be minimal with $\alpha\leq\omega^l(\delta)$. If $l=0$ then $\hat{\omega}(\gamma)\leq\delta$. Assume $l\neq0$. 
The claim implies that 
$\hat{\omega}^{l-1}(\delta)=\omega^{l-1}(\delta)$, and since $\omega^l(\delta)\geq\alpha$ it follows from Lemma \ref{translatetonegativeLem}(3) that 
$\hat{\omega}^l(\delta)=\alpha\geq\hat{\omega}(\gamma)$. This proves $\gamma\qohom\delta$. Conversely, assume that 
$\gamma\qohom\delta$ holds, $\hat{\omega}^n(\gamma)\leq\hat{\omega}^k(\delta)$.
If $\hat{\omega}^k(\delta)=\alpha$, then by the claim there must be $l\leq k$ with $\alpha\leq\omega^l(\delta)$. 
If $\gamma<\alpha$ or $\omega^l(\delta)=\infty$ then $\gamma<\omega^l(\delta)$. If 
$\alpha\leq\gamma$ and $\omega^l(\delta)\neq\infty$,  Lemma \ref{translatetonegativeLem}(4) implies $\omega(\gamma)=\omega^{l+1}(\delta)$. Now assume 
$\hat{\omega}^k(\delta)<\alpha$. Then also $\hat{\omega}^n(\gamma)<\alpha$, and it follows from the claim that 
$\omega^n(\gamma)=\hat{\omega}^n(\gamma)\leq \hat{\omega}^k(\delta)=\omega^k(\delta)$. This proves $\gamma\precsim_{\omega}\delta$ and concludes the proof of the Theorem.
     \end{proof}

    In the case of ordered exponential fields, we know that the exponential is compatible with a valuation $w$ if and only if 
$\exp$ induces a map on the residue field $Kw$ (see \cite[Chapter 3, Section 2]{Kuhlmann}). We can then ask if a similar result is true for  pre-differential-valued fields.
We say that $D$ induces a derivation $\bar{D}$ on $Kw$ if $D(\ringw)\subseteq \ringw$ and 
$D(\idealw)\subseteq\idealw$ for all $a,b\in\ringw$. The derivation 
$\bar{D}$ is then defined by $\bar{D}(a+\idealw):=D(a)+\idealw$ (the fact that $\bar{D}$ is a derivation follows directly from its definition). 
 We want to 
characterize the coarsenings $w$ of $v$ such that $D$ induces a derivation on $Kw$.
 The notion of cut point developed above for asymptotic couples 
 plays here an important role, so we extend this notion to fields: If $(K,v,D)$ is a pre-differential-valued field with 
asymptotic couple $(G,\psi)$,  we say that $y\in K$ is a \textbf{cut point} (respectively, a \textbf{regular cut point}) for $(K,v,D)$ if $v(y)$ is a cut point (respectively, a regular cut point) for 
$(G,\psi)$. Such an element always exists thanks to Proposition \ref{behaviourofDG}. We recall that $\quotientval$ denotes the valuation induced by $v$ on $Kw$ (see Section \ref{preliminarysection}).

\begin{Prop}\label{residuefieldprop}
    Let $(K,v,D)$ be a pre-differential-valued field, $y$ a regular cut point for $K$ and $\ringw$ a strict coarsening of $v$. The following holds:
    \begin{enumerate}[(i)]
     \item If $y\notin \ringw$, then  $D$ does not induce a map on $Kw$.
      \item If $y\in\idealw$, and if $D$ induces a derivation on $Kw$, then $D$  induces the constant map $0$ on $Kw$.
      \item  If $y\in \unitsw$, then $D$ induces a non-trivial derivation on $Kw$ making $(Kw,\quotientval,\bar{D})$ a pre-differential-valued field. 
Moreover, if $(K,v,D)$ is a differential-valued field , then  $(Kw,\quotientval,\bar{D})$ is also a differential-valued field.
    \end{enumerate}

\end{Prop}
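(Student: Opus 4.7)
The plan is to translate each case into a statement about the asymptotic couple $(G,\psi)$, using the correspondence $\ringw\leftrightarrow G_w:=v(\unitsw)$ between coarsenings and convex subgroups, together with the fact that $c:=v(y)$ is a regular cut point for $\psi$. The two key computational ingredients are: for $a\in K^{\neq 0}$ with $v(a)\neq 0$, $v(D(a))=\psi(v(a))+v(a)$; and for $a\in\unitsv$, only the bound $v(D(a))>\psi(g)$ for all $g\in G^{>0}$ coming from (DV2) is available. I also record at the outset that $y\notin\ringw$ forces $c<G_w$, hence $c<0$ (since $0\in G_w$), while $y\in\idealw$ forces $c>G_w$, hence $c>0$.

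For (i), pick any $g\in G_w^{>0}$ and $a\in K$ with $v(a)=g$, so $a\in\unitsw\subseteq\ringw$. Since $v_G(g)>v_G(c)$, Proposition \ref{behaviourofDG}(i) gives $\psi(g)\sim c$ with the same sign as $c$, hence $\psi(g)<G_w$; then $v(D(a))=\psi(g)+g<G_w$ and $D(a)\notin\ringw$, so $D$ cannot induce a map. For (ii), assuming $D$ does induce a derivation, I show $D(a)\in\idealw$ for every $a\in\ringw^{\neq 0}$: the case $a\in\idealw$ is the hypothesis; if $a\in\unitsw$ with $v(a)=g\in G_w^{\neq 0}$, Proposition \ref{behaviourofDG}(i) gives $\psi(g)\sim c$ of the sign of $c$, so $\psi(g)>G_w$ and $v(D(a))=\psi(g)+g>G_w$; if $a\in\unitsv$, (DV2) gives $v(D(a))>\psi(c)$, and $\psi(c)>G_w$ by the same proposition. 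Hence $\bar D\equiv 0$.

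For (iii), $c\in G_w$ combined with Lemma \ref{cinHlemma} yields $\psi(G_w^{\neq 0})\subseteq G_w$. Splitting $a\in\ringw^{\neq 0}$ by $v(a)$: if $v(a)\in G_w^{\neq 0}$, then $v(D(a))=\psi(v(a))+v(a)\in G_w$; if $v(a)>G_w$ then $v_G(c)>v_G(v(a))$ and Proposition \ref{behaviourofDG}(iv) gives $D_G(v(a))\sim v(a)$ of the same sign, so $v(D(a))>G_w$; if $v(a)=0$, picking any $g\in G_w^{>0}$ in (DV2) yields $v(D(a))>\psi(g)\in G_w$, so $v(D(a))$ is not strictly below $G_w$. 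This establishes $D(\ringw)\subseteq\ringw$ and $D(\idealw)\subseteq\idealw$. Non-triviality then follows from any $a\in K$ with $v(a)\in G_w^{>0}$: then $v(D(a))\in G_w$, so $D(a)\notin\idealw$ and $\bar D(\bar a)\neq 0$. To verify the pre-differential-valued property of $(Kw,\quotientval,\bar D)$, I first identify its asymptotic couple as $(G_w,\psi\vert_{G_w^{\neq 0}})$, and then check (DV2) by lifting $\bar a,\bar b$ to representatives in $\ringv$, respectively $\idealv$, of appropriate valuation and invoking (DV2) for $D$. Finally, (DV1) is transferred by applying the residue map to $\ringv=\idealv+\constants$, noting that $v/w$ is trivial on the constants of $\bar D$ by (DV2).

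The main obstacle will be the technical bookkeeping in (iii): well-definedness of $\bar D$ requires both inclusions $D(\ringw)\subseteq\ringw$ and $D(\idealw)\subseteq\idealw$ to hold simultaneously, and transferring (DV2) from $K$ to $Kw$ demands a careful choice of representatives, since on $\unitsv$ only the weaker bound from (DV2) controls $v(D(a))$ rather than the formula $\psi(v(a))+v(a)$.
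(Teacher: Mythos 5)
Your proposal is correct and follows essentially the same route as the paper's proof: everything is reduced to the convex subgroup $G_w=v(\unitsw)$ and the cut point $c=v(y)$, with Proposition \ref{behaviourofDG} controlling $v(D(a))$ for $v(a)\neq0$ and (DV2) handling $a\in\unitsv$, before transferring (DV2) and (DV1) to the residue field. The only (cosmetic) deviations are that in (iii) you invoke Lemma \ref{cinHlemma} to get $\psi(G_w^{\neq0})\subseteq G_w$ and use closure of $G_w$ under addition where the paper argues via convexity, and your choice of $g\in G_w^{>0}$ in (DV2) avoids the paper's case split between $c=0$ and $c\neq0$.
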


\begin{proof}
 Set $G_w:=v(\unitsw)$ and $c:=v(y)$. For (i):
  Take $a\in\unitsw\backslash\unitsv$, so $v(a)\in G_w\backslash\{0\}$. By assumption, we have $c<G_w$, hence 
   $v_G(v(a))>v_G(c)$, so by 
Proposition  \ref{behaviourofDG}(ii)  $v(D(a))$ has the same sign and the same 
 archimedean class as $c$. It follows that $v(D(a))<G_w$, hence 
 $D(a)\notin \ringw$.
 For (ii): By assumption, we have $G_w<c$, which by Lemma \ref{lemmacutpoint}(ii), implies $G_w<\psi(c)$. 
 Take $a\in\unitsw$, and let us show that $D(a)\in\idealw$. Assume first that $a\in\ringv$. Then (DV2) implies 
 $v(D(a))>\psi(c)$.  Since $G_w<\psi(c)$, it follows 
 that $G_w<v(D(a))$, hence $D(a)\in\idealw$. Now assume 
 $a\in\unitsw\backslash\ringv$, so in  particular $v(a)\neq0$. 
  Since $a\in\unitsw$, we  have $v(a)\in G_w$. 
  By convexity, $v_G(v(a))>v_G(c)$. By Proposition 
  \ref{behaviourofDG}(ii), 
  $D_G(v(a))$ has the same sign and the same archimedean class as $c$, hence $G_w<D_G(v(a))$. This implies $D(a)\in\idealw$.
For (iii): By assumption, we have $c\in G_w$.
Let $a\in\ringw$ and let us show that $D(a)\in\ringw$. 
Assume first that $a\in\ringv$. 
If $c=0$, then by Lemma \ref{gapiscutpointLem}(ii) we have $0=\sup\Psi$. It then follows from (DV2) that 
$0\leq v(D(a))$,which implies  $D(a)\in\ringw$.
If $c\neq0$, 
then (DV2) implies 
$v(D(a))>\psi(c)$. By Lemma \ref{lemmacutpoint}(ii), $\psi(c)\sim c$, hence 
$\psi(c)\in G_w$ by convexity of $G_w$. It follows that either 
$v(D(a))\in G_w$ or $G_w<v(D(a))$ holds, which implies $D(a)\in\ringw$.
Now assume $a\in\ringw\backslash\ringv$, so $v(a)\neq0$. By Proposition \ref{behaviourofDG}, we either have 
$v(D(a))\sim v(a)$ or $v_G(v(D(a)))\geq v_G(c)$. Since $c\in G_w$ and $v(a)\in G_w$, it follows from the convexity of $G_w$ that 
$v(D(a))\in G_w$, hence $D(a)\in\unitsw$. This shows $D(\ringw)\subseteq\ringw$ and $D(\unitsw\backslash\unitsv)\subseteq\unitsw$.
Now assume that $a\in\idealw$,
 so we have $G_w<v(a)$. By assumption we then have $v_G(c)>v_G(v(a))$, which by Lemma \ref{behaviourofDG}(iv) implies that $v(D(a))$ has same sign and same 
archimedean class as $v(a)$, hence $G_w<v(D(a))$, hence $D(a)\in\idealw$. This shows $D(\idealw)\subseteq\idealw$, which means that 
$D$ induces a derivation $\bar{D}$ on $Kw$. Since $D(\unitsw\backslash\unitsv)\subseteq\unitsw$, $\bar{D}$ is non-trivial.
 The fact that $\bar{D}$ satisfies (DV2) 
follows directly 
from the definition of $\bar{D}$ and $\quotientval$, so $(Kw,\quotientval,\bar{D})$ is a pre-differential-valued field. 
Moreover, the condition 
$\ringv=\constants+\idealv$ clearly implies $\mathcal{O}_{\quotientval}=\constants_{\quotientval}+\mathcal{M}_{\quotientval}$, where 
$\constants_{\quotientval}=\{c+\idealw\mid c\in\constants\}$.
\end{proof}

  In case we start with a pre-H-field, then the induced derivation in Proposition \ref{residuefieldprop}(iii) will also be a pre-H-field:

\begin{Prop}
 Let $(K,v,\leq,D)$ be a pre-H-field and $y\in K$ a regular cut point. If $w$ is a coarsening of $v$ with $y\in\unitsw$, then 
 $(Kw,\quotientval,\leq_w,\bar{D})$ is a pre-H-field. If $(K,\leq,D)$ is a H-field, then so is 
 $(Kw,\leq_w,\bar{D})$.
\end{Prop}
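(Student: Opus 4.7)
The plan is to verify the axioms of pre-H-field (and, in the H-field case, (H1)--(H2)) for the induced structure on $Kw$, leaning on the preceding Proposition \ref{residuefieldprop} to avoid redoing valuation-theoretic work. That proposition already delivers: (a) $(Kw,\quotientval,\bar{D})$ is a pre-differential-valued field, giving (PH1); (b) the identity $\mathcal{O}_{\quotientval}=\{c+\idealw\mid c\in\constants\}+\mathcal{M}_{\quotientval}$; and (c) the key inclusion $D(\unitsw\setminus\unitsv)\subseteq\unitsw$ extracted from its proof.

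For (PH2), I would use that any overring of a $\leq$-convex valuation ring in an ordered field is still $\leq$-convex, so $\ringw$ is $\leq$-convex, $\leq_w$ is a well-defined field order on $Kw$, and $\mathcal{O}_{\quotientval}$ (the image of the convex set $\ringv$) is $\leq_w$-convex. For (PH3), let $\bar{a}\in Kw$ with $\bar{a}>_w\mathcal{O}_{\quotientval}$ and pick a representative $a\in\ringw$. Unpacking $\leq_w$, the hypothesis forces $a>b$ and $a-b\notin\idealw$ for every $b\in\ringv$, whence $a>\ringv$. Applying (PH3) for $K$ gives $D(a)>0$. To upgrade this to $\bar{D}(\bar{a})>_w 0$ I need $D(a)\notin\idealw$: the condition $a>\ringv$ forces $v(a)<0$ and $a\in\ringw$ forces $v(a)\in G_w$, so $a\in\unitsw\setminus\unitsv$, and then $D(a)\in\unitsw$ by the inclusion from Proposition \ref{residuefieldprop}(iii).

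For the H-field claim, I would first identify the constant field of $\bar{D}$. The inclusion $\{\bar{c}\mid c\in\constants\}\subseteq\constants_{\bar{D}}$ is immediate. Conversely, let $\bar{a}\in\constants_{\bar{D}}$ with representative $a\in\ringw$; if $a\in\ringv$ then (H2) for $K$ writes $a=c+m$ with $c\in\constants$, $m\in\idealv\subseteq\idealw$, while if $a\in\ringw\setminus\ringv$ then, exactly as in the (PH3) argument, $D(a)\in\unitsw$, contradicting $\bar{D}(\bar{a})=0$. Hence $\constants_{\bar{D}}=\{\bar{c}\mid c\in\constants\}$ and (H2) for $Kw$ is item (b) above. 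Finally, (H1) for $Kw$ amounts to $\mathcal{O}_{\quotientval}=\{\bar{a}\in Kw\mid\exists\bar{c}\in\constants_{\bar{D}},\ |\bar{a}|\leq_w\bar{c}\}$; both inclusions follow by choosing a representative $a\in\ringw$, invoking (H1) for $K$, and using $\idealw\subseteq\idealv$ (which holds because $\ringv\subseteq\ringw$, so $v(x)>G_w$ implies $v(x)>0$).

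The main obstacle is the verification of (PH3), specifically the passage from the weak statement $D(a)>0$ to the strict inequality $\bar{D}(\bar{a})>_w 0$: this is the one place where the hypothesis $y\in\unitsw$ (i.e.\ that the cut point lies inside the coarsening) is genuinely used, through the inclusion $D(\unitsw\setminus\unitsv)\subseteq\unitsw$, whose proof in turn rests on the structural description of $\psi$ provided by Proposition \ref{behaviourofDG}.
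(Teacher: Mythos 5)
Your handling of (PH1), (PH2) and (PH3) is correct and is essentially the paper's own argument: the paper likewise derives the weak inequality $\bar{D}(\bar{a})\geq_w 0$ from (PH3) on $K$ and upgrades it to a strict inequality using the fact, extracted from the proof of Proposition \ref{residuefieldprop}(iii), that $a\in\ringw\setminus\ringv$ implies $D(a)\notin\idealw$. You are also right that this is the one point where the hypothesis $y\in\unitsw$ enters.

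There is, however, an error in your identification of the constant field of $\bar{D}$. In the case $a\in\ringv$ you write $a=c+m$ with ``$m\in\idealv\subseteq\idealw$''; the inclusion $\idealv\subseteq\idealw$ is false, being the reverse of the correct inclusion $\idealw\subseteq\idealv$ that you yourself state two lines later. Since $\ringv\subseteq\ringw$ forces $\idealw\subseteq\idealv$, and this containment is strict whenever $w$ is a strict coarsening of $v$, you cannot conclude $\bar{m}=0$, i.e.\ $\bar{a}=\bar{c}$, from $m\in\idealv$ alone. The identity $\constants_{\bar{D}}=\{\bar{c}\mid c\in\constants\}$ is nevertheless true, and the repair uses the same tool as your other case: if $m\in\idealv\setminus\idealw$, then $v(m)\in G_w^{\neq 0}$, so $m\in\unitsw\setminus\unitsv$ and hence $D(a)=D(m)\in\unitsw$ by the inclusion $D(\unitsw\setminus\unitsv)\subseteq\unitsw$ from the proof of Proposition \ref{residuefieldprop}(iii), contradicting $\bar{D}(\bar{a})=0$; therefore $m\in\idealw$ after all. (Alternatively, the full identification of $\constants_{\bar{D}}$ can be sidestepped: your case $a\in\ringw\setminus\ringv$ already gives $\constants_{\bar{D}}\subseteq\mathcal{O}_{\quotientval}$, and combined with $\{\bar{c}\mid c\in\constants\}\subseteq\constants_{\bar{D}}$ and the $\leq_w$-convexity of $\mathcal{O}_{\quotientval}$ this suffices for (H1) and (H2).) The paper itself is terser on this point, merely asserting that (H2) is preserved in passing to $Kw$.
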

\begin{proof}
 We know from Proposition \ref{residuefieldprop}(iii) that $(Kw,\quotientval,\bar{D})$ is a pre-differential-valued field. It follows from the definitions of 
$\quotientval$ and $\leq_w$ that axioms (PH2) and (H2) are preserved when going to the residue field $Kw$.
Moreover, it directly follows from (PH3) on $K$ that for any 
$a+\idealw\in Kw$, $a+\idealw>_w\mathcal{O}_{\quotientval}\Rightarrow \bar{D}(a+\idealw)\geq_w0$. Now note that, 
in the proof of 
Proposition \ref{residuefieldprop}(iii), we proved that $a\in\ringw\backslash\ringv\Rightarrow D(a)\notin \idealw$.
It follows that $a+\idealw\notin\mathcal{O}_{\quotientval}\Rightarrow \bar{D}(a+\idealw)\neq0$. This proves (PH3) for $Kw$.
\end{proof}

Proposition \ref{residuefieldprop} states that we cannot completely characterize the elements of the differential rank of $(K,v,D)$ by looking at the residue fields; indeed, there may be many 
coarsenings $w$ of $v$ which are not compatible with $\phi$ but still contain $y$, so that $D$ will induce a derivation on $Kw$. Another idea to characterize the differential rank is to look 
at the valued field $(K,w)$, where $w$ is a coarsening of $v$; one can then wonder if $(K,w,D)$ is still a pre-differential-valued field. The following proposition gives us an answer:

\begin{Prop}\label{coarseningprediffvalprop}
 Let $(K,v,D)$ be a pre-differential-valued field (respectively, a pre-H-field), $y$ a regular cut point for $(K,v,D)$ and $w$ a proper coarsening of $v$ such that $y\in\unitsw$. Then $w$ is in the differential rank of 
$(K,v,D)$ if and only if $(K,w,D)$ is a pre-differential-valued field (respectively, a pre-H-field).
\end{Prop}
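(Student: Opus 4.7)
The plan is to identify ``$w$ lies in the differential rank of $(K,v,D)$'' with $\psi$-compatibility of $H := G_w = v(\unitsw)$ via Proposition \ref{fieldgroupchain}, and to reduce this in turn, via Lemma \ref{compatiblecontainsf}, to condition~(ii) of that lemma, since condition~(i), namely $c := v(y) \in H$, is automatic from $y \in \unitsw$. The remaining task is to match condition~(ii) with axiom (DV2) for $w$.

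For the forward direction, I assume $H$ is $\psi$-compatible, pick $a \in \ringw$ and $b \in \idealw\setminus\{0\}$, and rewrite (DV2) for $w$ as the requirement that $v(D(a)) - \psi(v(b)) > h$ for every $h \in H$. Since $v(b) \notin H$, the last assertion of Lemma \ref{compatiblecontainsf} gives $\psi(v(b)) < H$, so $-\psi(v(b)) > H$; it therefore suffices to show that $v(D(a))$ is not strictly below $H$. I split on $v(a)$: (a) if $v(a) \in H \setminus \{0\}$, the formula $v(D(a)) = v(a) + \psi(v(a))$ together with $\psi$-compatibility places $v(D(a)) \in H$; (b) if $v(a) = 0$ and $D(a) \neq 0$, I apply (DV2) for $v$ with an auxiliary $b'$ of valuation $h_0 \in H^{>0}$ (which exists because $H$ is nontrivial and $v$ is surjective), obtaining $v(D(a)) > \psi(h_0) \in H$, so by convexity of $H$ either $v(D(a)) \in H$ or $v(D(a)) > H$; (c) if $v(a) > H$, then $v_G(c) > v_G(v(a))$, so Proposition \ref{behaviourofDG}(iv) forces $v(D(a)) \sim v(a)$ with the same sign, hence $v(D(a)) > H$. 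Combining any of these with $-\psi(v(b)) > H$ yields (DV2) for $w$.

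For the reverse direction, I assume $(K,w,D)$ is pre-differential-valued and argue the contrapositive of condition~(ii). Suppose there is $g \in G$ with $v_G(c) > v_G(g)$, $\psi(g) \in H$ and $g \notin H$. Using $\psi(-g) = \psi(g)$, I may assume $g < 0$, so $-g > H$; pick $b \in K$ with $v(b) = -g$, placing $b$ in $\idealw \setminus \{0\}$ with $\psi(v(b)) = \psi(g) \in H$. It then suffices to exhibit $a \in \ringw$ with $v(D(a)) \in H$, for then $v(D(a)) - \psi(v(b)) \in H$ contradicts (DV2) for $w$. If $c \neq 0$, $a := y$ works: Lemma \ref{lemmacutpoint}(ii) gives $\psi(c) \sim c$, so $\psi(c) \in H$ by convexity and $v(D(y)) = c + \psi(c) \in H$. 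If $c = 0$, Lemma \ref{gapiscutpointLem}(ii) yields $\Psi < 0$, in particular $\psi(g) \neq 0$; taking $a$ with $v(a) = \psi(g) \in H \setminus \{0\}$ gives $v(D(a)) = \psi(g) + \psi^2(g)$, and since $0$ is the cut point $v_G(\psi^2(g)) > v_G(\psi(g))$, so $v(D(a)) \sim \psi(g) \in H$ and convexity finishes the job.

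The pre-H-field version reduces to the pre-differential-valued version: the forward direction only needs the additional verification that (PH2) and (PH3) pass from $v$ to $w$, and this is immediate from $\ringv \subseteq \ringw$ (any coarsening of a $\leq$-convex valuation is $\leq$-convex, and $a > \ringw \Rightarrow a > \ringv \Rightarrow D(a) > 0$), while the reverse direction needs nothing new. I expect the main obstacle to be the case analysis in the forward direction, and in particular the subcase $v(a) = 0$, where the explicit formula $v(D(a)) = v(a) + \psi(v(a))$ is unavailable and one must squeeze $v(D(a))$ against $H$ from below by applying (DV2) for $v$ to a carefully chosen auxiliary element $b'$ of valuation inside $H^{>0}$.
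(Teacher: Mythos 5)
Your overall strategy is sound and close to the paper's: both reduce membership of $w$ in the differential rank to the two conditions of Lemma \ref{compatiblecontainsf} (condition (i) being automatic from $y\in\unitsw$), and both ultimately test (DV2) for $w$ by comparing $v(D(a))$ and $\psi(v(b))$ modulo $H:=G_w$. Your forward direction is correct; it essentially re-proves $D(\ringw)\subseteq\ringw$ (which the paper obtains by citing Proposition \ref{residuefieldprop}(iii)), and your case (b), squeezing $v(D(a))$ from below via an auxiliary $b'$ with valuation in $H^{>0}$, is a legitimate variant of the paper's argument there. The pre-H-field addendum is handled exactly as in the paper.

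There is, however, a genuine gap in the reverse direction, in the sub-case $c=0$. You assert that Lemma \ref{gapiscutpointLem}(ii) yields $\Psi<0$; that lemma only gives $\sup\Psi=0$, which does not exclude $0\in\Psi$, and $0\in\Psi$ can indeed occur when $0$ is the cut point. A concrete instance: take $G=\Z\times\Z$ with the lexicographic order and $\psi\equiv0$ on $G^{\neq0}$; this is an H-type asymptotic couple with cut point $0$ and $\Psi=\{0\}$, realized by a differential-valued field $\R((G))$ via Theorem \ref{mainThm}. With $H=\{0\}\times\Z$ and $g=(-1,0)$ one has $\psi(g)=0\in H$ and $g\notin H$, so condition (ii) of Lemma \ref{compatiblecontainsf} fails, but your witness ``$a$ with $v(a)=\psi(g)\in H\setminus\{0\}$'' does not exist since $\psi(g)=0$; worse, an element of valuation $0$ may be a constant with $D(a)=0$, and the identity $v(D(a))=v(a)+\psi(v(a))$ is unavailable there. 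The repair is easy and makes the sub-split on $\psi(g)$ unnecessary: take any $a$ with $v(a)\in H\setminus\{0\}$ (possible since $H$ is non-trivial); because $0$ is the cut point, $v_G(\psi(v(a)))>v_G(v(a))$, so $v(D(a))=v(a)+\psi(v(a))\sim v(a)\in H$, and convexity gives $v(D(a))\in H$, producing the desired violation of (DV2) for $w$. This is in effect what the paper does: its reverse direction chooses the witness $a\in\unitsw\setminus\unitsv$ uniformly and shows $D(a)\in\unitsw$ via Proposition \ref{behaviourofDG}, with no case distinction on $c$.
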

\begin{proof}
 Set $c:=v(y)$.
 Let $a\in\ringw$ and $b\in\idealw,b\neq0$. Since $b\in\idealw$, we have $D(b)\neq0$, so 
$w(\phi(b))\neq\infty$. 
 If $D(a)=0$ then obviously 
$w(D(a))>w(\phi(b))$.  Thus, we may assume $w(D(a)),w(\phi(b))\neq\infty$. Then the inequality 
$w(D(a))>w(\phi(b))$ is equivalent to $\frac{D(a)}{\phi(b)}\in\idealw$.
 Assume $w$ is not $\phi$-compatible. Since $y\in\unitsw$, it follows from Proposition \ref{behaviourofDG}(i) and (iv)
 that for all $b\in\unitsw$, $\phi(b)\in\unitsw$. Therefore, there must exist $b\notin\unitsw$ with $\phi(b)\in\unitsw$, without loss of generality 
 $b\in\idealw$ (otherwise, take $b^{-1}$). Take an $a\in\unitsw\backslash\unitsv$, 
so $v(a)\in G_w^{\neq0}$. By Proposition \ref{behaviourofDG}(ii), (iii) and (iv), either $D_G(v(a))\sim v(a)$ or 
$v_G(D_G(v(a)))\geq v_G(c)$ is true. By assumption, $c\in G_w$, so the convexity of $G_w$ implies $v(D(a))=D_G(v(a))\in G_w$, hence $D(a)\in\unitsw$.
 We thus have $D(a),\frac{1}{\phi(b)}\in\ringw\backslash\idealw$, and since $\idealw$ is a prime ideal of $\ringw$ this implies 
$\frac{D(a)}{\phi(b)}\notin\idealw$, which contradicts (DV2) for $w$.
Assume now that $w$ is $\phi$-compatible. Take $a\in\ringw$ with $D(a)\neq0$ and $b\in\idealw$. By Proposition \ref{residuefieldprop}(iii), we know that $D(a)\in\ringw$. 
Since $v(b)\notin G_w$ and since $G_w$ is $\psi$-compatible, we know by Lemma
\ref{compatiblecontainsf} that $\psi(v(b))<G_w$, hence
$\phi(b)\notin\ringw$, so $\frac{1}{\phi(b)}\in\idealw$, hence $\frac{D(a)}{\phi(b)}\in\idealw$. This proves (DV2). Now assume that 
$(K,v,\leq,D)$ is a pre-H-field. Since $\ringv$ is $\leq$-convex and $w$ is a coarsening of $v$, $\ringw$ is also $\leq$-convex, so 
$(K,w,\leq,D)$ satisfies (PH2). Since $\ringv\subseteq\ringw$, it also satisfies (PH3).
\end{proof}

\begin{Rem}
 If $w\neq v$, $(K,w,D)$ cannot satisfy (DV1), so it is not a differential-valued field.
\end{Rem}

 Finally, Proposition \ref{residuefieldprop} and \ref{coarseningprediffvalprop} allow us to give a full characterization of coarsenings $w$ of $v$ which belong to the differential rank of 
$(K,v,D)$ by looking simultaneously at  the valued field $(K,w)$ and at the residue field $Kw$:
\begin{Thm}\label{characterizationofdiffrankThm}
Let $(K,v,D)$ be a pre-differential-valued field (respectively, a pre-H-field) and $w$ a coarsening of $v$. Then $w$ is in the differential rank of $(K,v,D)$ if and only if 
the two following conditions are satisfied:
\begin{enumerate}[(1)]
 \item $D$ induces a non-trivial derivation on $Kw$.
 \item $(K,w,D)$ is a pre-differential-valued field (respectively, a pre-H-field).
\end{enumerate}
\end{Thm}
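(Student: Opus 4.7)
The plan is to derive the theorem as an immediate combination of Propositions \ref{residuefieldprop} and \ref{coarseningprediffvalprop}, using the location of a regular cut point $y$ with respect to $\ringw$, $\unitsw$, $\idealw$ as the bridge between the existence of an induced derivation on $Kw$ and the $\phi$-compatibility of $w$. First, I fix a regular cut point $y$ for $(K,v,D)$, which exists by Proposition \ref{behaviourofDG}, and set $c := v(y)$ and $G_w := v(\unitsw)$.

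For the forward direction, assume $w$ lies in the differential rank, so that $G_w$ is $\psi$-compatible. Lemma \ref{compatiblecontainsf} then forces $c \in G_w$, hence $y \in \unitsw$. With $y \in \unitsw$, Proposition \ref{residuefieldprop}(iii) immediately delivers condition~(1) (and preserves (DV1) when $(K,v,D)$ is differential-valued), while Proposition \ref{coarseningprediffvalprop} delivers condition~(2); the pre-H-field variant is handled by the same two propositions, since both already state the respective pre-H-field versions.

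For the converse, I use (1) to pinpoint $y$ inside $\unitsw$ and then invoke Proposition \ref{coarseningprediffvalprop}. Proposition \ref{residuefieldprop}(i) rules out $y \notin \ringw$ (in that case no derivation is induced at all), and Proposition \ref{residuefieldprop}(ii) rules out $y \in \idealw$ (in that case the induced derivation is the zero map). Both possibilities contradict the non-triviality asserted in (1), so $y \in \unitsw$. Combined with hypothesis (2), Proposition \ref{coarseningprediffvalprop} concludes that $w$ is $\phi$-compatible, i.e.\ lies in the differential rank. No substantial obstacle arises, since the theorem is essentially a repackaging of the two earlier propositions; the one conceptual point worth highlighting is that the word \emph{non-trivial} in (1) is precisely what eliminates the case $y \in \idealw$ --- without it, many $\phi$-incompatible coarsenings $w$ with $y \in \idealw$ would still induce the zero derivation on $Kw$ and satisfy (2) vacuously, so the characterization would fail.
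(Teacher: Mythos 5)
Your proof is correct and follows essentially the same route as the paper: fix a regular cut point $y$, use Lemma \ref{compatiblecontainsf} to place $v(y)$ in $G_w$ for the forward direction, and combine Propositions \ref{residuefieldprop} and \ref{coarseningprediffvalprop} in both directions, with the non-triviality in (1) forcing $y\in\unitsw$ via parts (i) and (ii) of Proposition \ref{residuefieldprop}. Your remark on why the word \emph{non-trivial} is indispensable is a nice elaboration of a step the paper leaves implicit, but the argument is the same.
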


 \begin{proof}
  Take a regular cut point $y$ for $(K,v,D)$ and set $c:=v(y)$. If $w$ is $\phi$-compatible, then by Lemma \ref{compatiblecontainsf} we must have $c\in G_w$ hence $y\in\unitsw$, which by Proposition 
\ref{residuefieldprop} implies that $D$ induces a non-trivial derivation on $Kw$. We can then apply Proposition \ref{coarseningprediffvalprop} and we get that $(K,w,D)$ is 
a pre-differential-valued field (respectively, a pre-H-field). Conversely, assume (1) and (2) hold. By Proposition \ref{residuefieldprop}, (1) implies $y\in\unitsw$, so we can apply \ref{coarseningprediffvalprop} and we get that, 
since (2) holds, $w$ must be in the differential rank of $(K,v,D)$.
 \end{proof}

  \subsection{The unfolded differential rank}\label{unfrksection}
  Our definition of the differential rank is not quite satisfactory if the cut point for $\psi$ is not $0$. Indeed, by 
  Lemma \ref{compatiblecontainsf}, we see
  that the $\psi$-rank of $G$ does not give any information on what happens for ``small'' elements, 
  i.e elements $g$ with $v_G(g)\geq v_G(c)$ where $c$ is a cut point 
  for $\psi$.
  We need to ``unfold'' the map $\psi$ around $0$ to 
  get this information, i.e we need to translate $\psi$ in order to obtain a new map whose cut point is closer to $0$ than $c$ is, thus 
  gaining information 
  on the behavior of $\psi$ around $0$.  Ideally, this translate of $\psi$ should have $0$ as a cut point.
  If $\psi$ happens to have a gap or a maximum $g$, then we can do this by considering the map $\psi':=\psi-g$, since this map has $0$ as
  the cut point. However, 
  if $(G,\psi)$ has asymptotic integration, then we cannot obtain a map with cut point $0$ by simply 
  translating $\psi$. Instead, we need to consider an infinite family of translates of $\psi$ whose cut points approach $0$, and then take the 
  union of their ranks.

   Now let us denote by $R$ the $\psi$-rank of $G$ and by $P$ the principal $\psi$-rank of $G$.
    For any $g\in G^{\neq0}$, let us denote by $\psi_g$ the map $G^{\neq0}\to G$, $h\mapsto\psi(h)-\psi(g)$.  Note that $(G,\psi_g)$ is also an 
    asymptotic couple. For every $g$ we choose 
    a cut point $c_g$ for $\psi_g$.
    We denote by $S_g$ the  $\psi_g$-rank of $G$ and by $Q_g$ the   $\psi_g$-principal rank of $G$.



  \begin{Lem}\label{cgarbitrarilysmall}
   Let  $h\in G^{\neq0}$. For any cut point $c_h$ for $\psi_h$, we have 
  $v_G(c_h)\geq v_G(h)$. 
  \end{Lem}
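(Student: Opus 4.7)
The plan is to combine Lemma \ref{lemmacutpoint}(ii), which characterizes non-zero cut points via the relation $\psi_h(c_h)\sim c_h$, with the fundamental Lemma \ref{Fundlemofascouples} applied to the pair $(h,c_h)$. Since $\psi_h(c_h) = \psi(c_h) - \psi(h)$ has the same shape as the difference $\psi(c_h) - \psi(h)$ to which Lemma \ref{Fundlemofascouples} applies, this pairing is very natural.

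First I would dispose of the degenerate cases. If $c_h = 0$, then $v_G(c_h) = \infty \geq v_G(h)$, so the claim is immediate; and if $c_h = h$, there is nothing to prove. So I may assume $c_h \neq 0$ and $c_h \neq h$. Since $(G,\psi_h)$ is itself an asymptotic couple (as noted in Section \ref{acsubsection}), Lemma \ref{lemmacutpoint}(ii) applied to $\psi_h$ gives $\psi_h(c_h) \sim c_h$, i.e.
\[
v_G\bigl(\psi(c_h)-\psi(h)\bigr) = v_G(c_h).
\]
On the other hand, Lemma \ref{Fundlemofascouples} applied to $c_h$ and $h$ yields
\[
v_G\bigl(\psi(c_h)-\psi(h)\bigr) > v_G(c_h - h).
\]
Combining these two equations gives $v_G(c_h) > v_G(c_h - h)$.

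The final step is a short calculation in the valued group $(G,v_G)$. Suppose for contradiction that $v_G(c_h) < v_G(h)$. Then by the standard property of valuations, $v_G(c_h - h) = \min(v_G(c_h), v_G(h)) = v_G(c_h)$, and the inequality becomes $v_G(c_h) > v_G(c_h)$, a contradiction. Hence $v_G(c_h) \geq v_G(h)$.

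I do not foresee any serious obstacle in this argument; the only non-obvious part is recognizing that $\psi_h(c_h)$ is exactly the difference $\psi(c_h) - \psi(h)$ to which Lemma \ref{Fundlemofascouples} applies, which is precisely the mechanism that forces cut points of a translate of $\psi$ to live above the translating element in the $v_G$-ordering.
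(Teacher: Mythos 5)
Your proof is correct and follows essentially the same route as the paper's: apply Lemma \ref{lemmacutpoint}(ii) to the asymptotic couple $(G,\psi_h)$ to get $\psi_h(c_h)\sim c_h$, combine with Lemma \ref{Fundlemofascouples} to obtain $v_G(c_h)>v_G(c_h-h)$, and conclude by the ultrametric inequality. Your explicit handling of the case $c_h=h$ (needed to invoke Lemma \ref{Fundlemofascouples}, which requires distinct arguments) is a small point of extra care that the paper's proof leaves implicit.
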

 
\begin{proof}
 If $c_h=0$ this is clear, so assume $c_h\neq0$. By Lemma 
 \ref{lemmacutpoint}(ii), we have $c_h\sim\psi_h(c_h)$.
 By Lemma \ref{Fundlemofascouples}, we have $v_G(\psi_h(c_h))=v_G(\psi(c_h)-\psi(h))>v_G(c_h-h)$. It follows that 
 $v_G(c_h)>v_G(c_h-h)$, which implies $v_G(c_h)\geq v_G(h)$.
\end{proof}

Lemma \ref{cgarbitrarilysmall} shows in particular that we can choose $g$ so that $c_g$ is arbitrarily small, which means that the family 
 $\{\psi_g\}_{g\in G}$ is well-suited for our purpose. This motivates the following definition: 

\begin{Def}\label{unfrankdef}
   The \textbf{unfolded $\psi$-rank} of the asymptotic couple $(G,\psi)$ is the order-type of the totally ordered set 
   $S:=\bigcup_{0\neq g\in G}S_g$.  The \textbf{principal unfolded $\psi$-rank} of the asymptotic couple $(G,\psi)$ is the order-type of the totally ordered set 
   $Q:=\bigcup_{0\neq g\in G}Q_g$. If $(K,v,D)$ is a pre-differential-valued field, we define its 
   \textbf{unfolded differential rank} ( respectively, its \textbf{ principal unfolded differential rank}) as the unfolded $\psi$-rank of $G$ (respectively the principal unfolded $\psi$-rank of $G$), 
where $(G,\psi)$ is the asymptotic couple 
   associated to $(K,v,D)$. \comment{We say that a convex subring $\ringw$ of $(K,v)$ is in the unfolded differential rank of 
   $(K,v,D)$ if $G_w$ is in the unfolded $\psi$-rank of $G$.}
 \end{Def}
 
 In order to justify Definition \ref{unfrankdef}, 
 we still need to check that  $S$ and $P$ satisfy the conditions that we want. We want to show that 
 $S$ contains the $\psi$-rank of $G$, and that the only new subgroups that were added in the process 
 are subgroups contained in $\{g\in G\mid v_G(g)\geq v_G(c)\}$, where $c$ is a cut point for $\psi$.

\begin{Lem}\label{unfranklem}
 Let $g\in G^{\neq0}$. The following holds: 
 
 \begin{enumerate}[(i)]
  \item We have $S_g=\{H\in S\mid g\in H\}$. In particular, $S_g$ is a final segment of $S$.
  \item For any convex subgroup $H$ of $G$, $H\in Q$ if and only if there is $h\in G$ such that $H=\underset{F\in S_h}{\bigcap}F$.
  \item We have $Q_g=\{H\in Q\mid g\in H\}$. In particular, $Q_g$ is a final segment of $Q$.
   
 \end{enumerate}

\end{Lem}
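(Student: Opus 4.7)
My plan is to prove (i) first, as it is the technical core of the lemma; parts (ii) and (iii) will then follow by routine manipulations of principal convex subgroups as intersections. The ``final segment'' claims in (i) and (iii) are immediate consequences of the characterizations ``$\{H \in S \mid g \in H\}$'' and ``$\{H \in Q \mid g \in H\}$'' together with convexity: if $g \in H$ and $H \subseteq H'$, then $g \in H'$.

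For the forward inclusion of (i), suppose $H \in S_g$, so $H$ is $\psi_g$-compatible. Since $g \in G^{\neq 0} = \dom(\psi_g)$, the compatibility condition applied to $g$ yields $g \in H \Leftrightarrow \psi_g(g) \in H$. But $\psi_g(g) = \psi(g) - \psi(g) = 0 \in H$, so $g \in H$. The converse is the main obstacle. Given $H \in S$ with $g \in H$, pick any $h \neq 0$ with $H \in S_h$. The key observation is that $\psi_g$ and $\psi_h$ differ by the constant $\psi(h) - \psi(g)$, since $\psi_g(f) = \psi_h(f) + \psi(h) - \psi(g)$; consequently, $\psi_g$-compatibility of $H$ is equivalent to $\psi_h$-compatibility of $H$, provided the element $\psi(h) - \psi(g)$ lies in $H$. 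By the forward direction applied to $h$, we have $h \in H$, hence $h - g \in H$. Lemma \ref{Fundlemofascouples} then gives $v_G(\psi(h) - \psi(g)) > v_G(h - g)$, so $\psi(h) - \psi(g)$ lies in a strictly smaller archimedean class than $h - g$, and by convexity of $H$ we conclude $\psi(h) - \psi(g) \in H$. This finishes (i).

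For (ii), the $\psi_h$-principal subgroup of $G$ generated by $a \in G^{\neq 0}$ is by definition $\bigcap\{F \in S_h \mid a \in F\}$, and by (i) this equals $\bigcap\{F \in S \mid h \in F \text{ and } a \in F\}$. Choosing $h' \in G^{\neq 0}$ with $v_G(h') = \min(v_G(h), v_G(a))$, so that $|h'|$ is archimedean-equivalent to $\max(|h|, |a|)$, convexity yields $\{F \in S \mid h' \in F\} = \{F \in S \mid h \in F \text{ and } a \in F\}$, which by (i) equals $S_{h'}$; so the principal subgroup equals $\bigcap_{F \in S_{h'}} F$. Conversely, by (i) every $F \in S_{h'}$ contains $h'$, so $\bigcap_{F \in S_{h'}} F$ is precisely the $\psi_{h'}$-principal subgroup generated by $h'$, hence lies in $Q_{h'} \subseteq Q$.

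For (iii), the forward inclusion is immediate from (i). For the converse, suppose $H \in Q$ with $g \in H$. By (ii), $H = \bigcap_{F \in S_{h'}} F$ for some $h' \in G^{\neq 0}$. Since $g \in H \subseteq F$ for every $F$ appearing in the intersection, $\{F \in S \mid h' \in F\} = \{F \in S \mid h' \in F \text{ and } g \in F\}$, which by (i) equals $\{F \in S_g \mid h' \in F\}$. Hence $H$ is the $\psi_g$-principal subgroup generated by $h'$, so $H \in Q_g$. The principal hurdle throughout is the converse of (i); once the ``shift by a constant in $H$'' argument is in place, everything else is bookkeeping.
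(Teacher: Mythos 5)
Your proof is correct and follows essentially the same route as the paper's: the forward inclusion of (i) via $\psi_g(g)=0\in H$, the converse via the observation that $\psi_g$ and $\psi_h$ differ by the constant $\psi(h)-\psi(g)$, which lies in $H$ by Lemma \ref{Fundlemofascouples} and convexity, and then (ii) and (iii) by identifying principal subgroups with intersections and using (i) to shuffle the generating elements. The only cosmetic difference is that in (ii) you introduce an auxiliary element $h'$ with $v_G(h')=\min(v_G(h),v_G(a))$ where the paper instead argues by symmetry and a without-loss-of-generality reduction to $v_G(f)\geq v_G(h)$; the content is the same.
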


\begin{proof}
Let us prove (i). If $H\in S_g$, then obviously $H\in S$. Moreover, we have 
 $\psi_g(g)=0\in H$, so if $H\in S_g$ it follows from $\psi_g$-compatibility that $g\in H$. 
 This proves $S_g\subseteq\{H\in S\mid g\in H\}$. Now assume that $g\in H\in S$ holds. 
 By definition of $S$, there is $h\in G$ such that $H\in S_h$. 
 Let us show that 
 $H\in S_g$. Let $f\in G^{\neq0}$. We already showed that $H\in S_h$
  implies $h\in H$. By definition of $S_h$, we have 
 $f\in H\Leftrightarrow \psi_h(f)\in H$.
 Moreover, we have  $\psi_h(f)=\psi_g(f)+\psi(g)-\psi(h)$.
    By Lemma \ref{Fundlemofascouples}, we have $v_G(\psi(g)-\psi(h))>v_G(g-h)$. Since $g,h\in H$, 
    it follows from convexity of $H$ that $\psi(g)-\psi(h)\in H$. It follows that 
   $\psi_g(f)\in H$ if and only if $\psi_h(f)\in H$. It follows that 
   $f\in H\Leftrightarrow\psi_g(f)\in H$. This shows $H\in S_g$.   
   Let us show (ii). By definition of $Q$, $H\in Q$ if and only if there exists $h,f\in G$ such that 
   $H=\underset{f\in F\in S_h}{\bigcap}F$. Now note that, for any  $F\in S$, it follows from (i) that 
   $f\in F\in S_h\Leftrightarrow f,h\in F\in S\Leftrightarrow h\in F\in S_h$, so 
   $H=\underset{f\in F\in S_h}{\bigcap}F$ if and only if $H=\underset{h\in F\in S_f}{\bigcap}F$.
   Therefore, we can assume without loss of generality that $v_G(f)\geq v_G(h)$. 
   It follows from (i) that $f\in F\in S_h\Rightarrow h\in F\in S_h$, and since $v_G(f)\geq v_G(h)$ it follows from 
   convexity that $h\in F\in S_h\Rightarrow f\in F\in S_h$. It follows that 
   $H=\underset{f\in F\in S_h}{\bigcap}F$ holds if and only if 
   $H=\underset{h\in F\in S_h}{\bigcap}F$. By (i), we have 
   $\underset{h\in F\in S_h}{\bigcap}F=\underset{F\in S_h}{\bigcap}F$. This shows (ii).    
  Now let us show (iii). Assume $H\in Q_g$. Then in particular, $H\in S_g$, hence $g\in H$ by (i).  
  Assume $g\in H\in Q$. By (ii), there is $h$ such that 
  $H=\underset{F\in S_h}{\bigcap}F$. Because $g\in H$, we have 
  $H=\underset{g\in F\in S_h}{\bigcap}F$. 
  It follows from (i) that, for any  $F\in S$, 
  $g\in F\in S_h\Leftrightarrow g,h\in F\in S\Leftrightarrow h\in F\in S_g$. 
  It follows that $H=\underset{h\in F\in S_g}{\bigcap}F$. This means that $H$ is the smallest element of $S_g$ with $h\in H$, hence 
  $H\in Q_g$.  
\end{proof}

The principal unfolded differential rank is related to the unfolded differential rank the same way that 
principal ranks are usually related to the corresponding rank:

\begin{Prop}\label{charofQProp}
 If $H$ is a convex subgroup of $G$, then $H\in Q$ if and only if there exists $h\in G$ such that 
   $H$ is the smallest element of $S$ containing $h$.
\end{Prop}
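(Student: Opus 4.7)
The plan is to deduce this essentially directly from Lemma \ref{unfranklem}, which has already done most of the work. The key observation I would make first is that for any fixed $h\in G^{\neq 0}$, the family $S_h$ is closed under arbitrary intersections: an intersection of $\psi_h$-compatible convex subgroups is again a $\psi_h$-compatible convex subgroup, and since every $F\in S_h$ contains $h$ (as $\psi_h(h)=0\in F$ forces $h\in F$ by $\psi_h$-compatibility, using Lemma \ref{unfranklem}(i) for a cleaner statement), the intersection is non-trivial. Consequently, $\bigcap_{F\in S_h}F$ is the smallest element of $S_h$, i.e.\ the $\psi_h$-principal convex subgroup of $G$ generated by $h$.

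For the forward implication, I would start with $H\in Q$. By Lemma \ref{unfranklem}(ii), there exists some $h\in G$ with $H=\bigcap_{F\in S_h}F$. Rewriting $S_h=\{F\in S\mid h\in F\}$ via Lemma \ref{unfranklem}(i) gives $H=\bigcap_{F\in S,\,h\in F}F$. By the observation above, this intersection lies in $S_h\subseteq S$ and is precisely the smallest element of $S$ containing $h$.

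For the reverse implication, suppose $H$ is the smallest element of $S$ that contains some $h\in G$. Using Lemma \ref{unfranklem}(i) again, $\{F\in S\mid h\in F\}=S_h$, so $H$ is simultaneously the smallest element of $S_h$, and therefore $H=\bigcap_{F\in S_h}F$. Lemma \ref{unfranklem}(ii) then yields $H\in Q$.

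There is no real obstacle here: the proposition is essentially a restatement of Lemma \ref{unfranklem}(ii) once one notices, via (i), that ``smallest element of $S$ containing $h$'' and ``smallest element of $S_h$'' refer to the same object. The only small point requiring care is verifying that this infimum actually belongs to $S$ (equivalently, to $S_h$), which follows from the closure of $S_h$ under intersections noted above.
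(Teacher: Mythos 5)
Your proposal is correct and follows essentially the same route as the paper: both directions reduce to Lemma \ref{unfranklem}(i) (identifying $S_h$ with $\{F\in S\mid h\in F\}$) combined with Lemma \ref{unfranklem}(ii). Your explicit remark that $S_h$ is closed under intersections, so that the infimum actually lies in $S$, is a point the paper leaves implicit, but it does not change the argument.
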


\begin{proof}
  Let $H\in Q$. It follows from Lemma \ref{unfranklem}(ii) that $H=\underset{F\in S_g}{\bigcap}F$ for some 
  $g\in G$.
  It then follows from Lemma \ref{unfranklem}(i) that $H=\underset{g\in F\in S}{\bigcap}F$, i.e $H$ is the smallest element of $S$ 
  containing $g$. Conversely, assume that 
  $H$ is the smallest element of $S$ 
  containing $g$. It follows from Lemma \ref{unfranklem}(i) 
  that $H$ is then the smallest element of $S_g$ containing $g$, 
  hence $H\in Q_g\subseteq Q$.  
\end{proof}

 Now we can describe the connection between the (principal) $\psi$-rank and the (principal) unfolded
 $\psi$-rank:
 
 \begin{Prop}\label{unfrankProp}
  Let $c$ be a cut point for $\psi$. The following holds:
  \begin{enumerate}[(i)]
   \item We have $R=\{ H\in S\mid c\in H\}$.  In particular, $R$ is a final segment of $S$.
   
   \item We have $P=\{H\in Q\mid c\in H\}$. In particular, $P$ is a final segment of $Q$.
   \item Assume $c\neq0$ and let $G_c$ be the $\psi$-principal subgroup of $G$ generated by 
    $c$. Then $R$ (respectively, $P$)  is the principal final segment of 
    $S$ (respectively, of $Q$) generated by $G_c$.
      \item Assume $c=0$.  Then $R=S$ and $P=Q$.
  
   \item Assume $g$ is either the maximum of $\Psi$ or a gap for $\psi$. Then $S$
   (respectively, $Q$) is equal to the 
 $\psi'$-rank of $G$ (respectively, the principal $\psi'$-rank of $G$), where $\psi':=\psi-g$.
  \end{enumerate}

 \end{Prop}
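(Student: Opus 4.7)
The strategy is to establish (i) directly from Lemmas \ref{compatiblecontainsf} and \ref{unfranklem}, then derive (ii) using Proposition \ref{charofQProp}, and obtain (iii), (iv), (v) as consequences. For the inclusion $R \subseteq \{H \in S \mid c \in H\}$, Lemma \ref{compatiblecontainsf} guarantees $c \in H$ whenever $H$ is $\psi$-compatible; to exhibit $H \in S$, I choose $g := c$ if $c \neq 0$ and any nonzero $g \in H$ otherwise, use $\psi$-compatibility to secure $\psi(g) \in H$, and observe that the identity $\psi_g(f) - \psi(f) = -\psi(g) \in H$ makes $\psi_g$-compatibility equivalent to $\psi$-compatibility, so $H \in S_g \subseteq S$. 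For the converse, take $H \in S_g$ with $c \in H$; then $g \in H$ by Lemma \ref{unfranklem}(i), and Proposition \ref{behaviourofDG} forces $\psi(g) \in H$ by convexity (either $\psi(g) \sim c$ when $v_G(g) \geq v_G(c)$, or $v_G(\psi(g)) > v_G(g)$ when $v_G(c) > v_G(g)$). The same identity then converts $\psi_g$-compatibility back into $\psi$-compatibility.

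For (ii), the direction $\{H \in Q \mid c \in H\} \subseteq P$ is immediate from (i): by Proposition \ref{charofQProp} there is $h$ with $H$ the smallest element of $S$ containing $h$, and any $F \in R$ containing $h$ is in $S$ by (i), so $H \subseteq F$, making $H$ the smallest $\psi$-compatible subgroup containing $h$. For $P \subseteq \{H \in Q \mid c \in H\}$, let $H$ be $\psi$-principal generated by $g$. Combining minimality with (i) rewrites $H$ as $\bigcap\{F \in S \mid c \in F \text{ and } g \in F\}$. A case split on the archimedean relation between $c$ and $g$ collapses this joint condition: if $v_G(c) \geq v_G(g)$, convexity forces $c \in F$ whenever $g \in F$, so $H$ is the smallest element of $S$ containing $g$; if $v_G(c) < v_G(g)$, symmetrically $H$ is the smallest element of $S$ containing $c$. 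Either way, $H \in Q$ by Proposition \ref{charofQProp}.

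For (iii), $G_c$ is itself the smallest element of $S$ containing $c$ (hence lies in $Q$), and (i) converts ``$G_c \subseteq F$'' into ``$c \in F$'', so the principal final segment of $S$ generated by $G_c$ coincides with $R$; the same reasoning inside $Q$ handles $P$. Part (iv) is immediate since $c = 0$ lies in every subgroup, so (i) and (ii) reduce to $R = S$ and $P = Q$. For (v), set $\psi' := \psi - g$; by Lemma \ref{translatebygap} the cut point of $\psi'$ is $0$. The key observation is that $(\psi')_h(f) = \psi'(f) - \psi'(h) = \psi(f) - \psi(h) = \psi_h(f)$ for every $h$, so the family of translates defining the unfolded $\psi'$-rank is exactly the one defining $S$; applying (iv) to $(G,\psi')$ then gives that the $\psi'$-rank equals its unfolded rank, which is $S$, and the identical argument handles the principal ranks. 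The main obstacle I expect is the $\subseteq$ direction of (ii): identifying a single element that witnesses $H \in Q$ requires the archimedean case split above, so that convexity reduces the two-element membership constraint coming from (i) to a single one.
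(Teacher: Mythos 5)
Your proposal is correct and follows essentially the same route as the paper: part (i) via the identity $\psi_g(f)=\psi(f)-\psi(g)$ together with Lemma \ref{compatiblecontainsf} and the fact that a convex subgroup containing the cut point is closed under $\psi$ (Lemma \ref{cinHlemma}, which you re-derive from Proposition \ref{behaviourofDG}); part (ii) via Proposition \ref{charofQProp} and the same archimedean case split used in the paper to collapse the two membership constraints to one; and (iii)--(v) as consequences, with (v) resting on $\psi_h=\psi'_h$ and Lemma \ref{translatebygap} exactly as in the paper. No gaps.
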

 \begin{proof}
   Let us prove (i). It follows from Lemma \ref{compatiblecontainsf} that $H\in R\Rightarrow c\in H$. Now   
   we just have to show that, for any non-trivial convex subgroup 
   $H$ of $G$ containing $c$, $H\in R\Leftrightarrow H\in S$.
   Let $H$ be a convex subgroup of $G$ containing $c$.
   Take $g\in H^{\neq0}$. It follows from 
   Lemma \ref{unfranklem}(i) that 
   $H\in S$ if and only if $H\in S_g$.   
   By Lemma \ref{cinHlemma}, we have
   $\psi(g)\in H$. For any $f\in G^{\neq0}$, we have
    $\psi_g(f)=\psi(f)-\psi(g)$, which implies 
    $\psi_g(f)\in H\Leftrightarrow \psi(f)\in H$. It then follows from the definition of 
    $R$ and $S_g$ 
    that 
    $H\in R$  if and only if $H\in S_g$.
    This shows that $H\in R$ if and only if $H\in S$. Now let us show (ii).
    Assume $H\in P$. Then there exists $g\in G$ such that $H$ is the smallest element of $R$ containing $g$.
    It follows from (i) that $H$ is the smallest element of $S$ containing  
    $c$ and $g$. It then follows from convexity that $H$ is the smallest element of $S$ containing $h$, where we set 
    $h:=c$ if $v_G(g)\geq v_G(c)$ and $h:=g$ is $v_G(c)>v_G(g)$. By Proposition \ref{charofQProp}, this implies 
    $H\in Q$. Conversely, assume $c\in H\in Q$. Then by Proposition \ref{charofQProp}, there is $g\in H$ such that 
    $H$ is the smallest element of $S$ containing $g$. Since $c\in H$, it follows from (i) that 
    $H$ is the smallest element of $R$ containing $g$, hence $H\in P$.
   This shows (ii). (iii) and (iv) then follow directly from (i) and (ii). Let us prove (v).
 Note that $\psi_g(h)=\psi'(h)-\psi'(g)=\psi_g'(h)$ for any $g,h$. It follows that the (principal) unfolded $\psi$-rank of $G$ is equal to the (principal) unfolded 
$\psi'$-rank of $G$. By lemma \ref{translatebygap}, $0$ is a cut point for $\psi'$. It then follows from (iv) that 
the (principal) unfolded $\psi'$-rank of $G$ is equal to the (principal)
$\psi'$-rank of $G$.
 \end{proof}

 \begin{Rem}
  Proposition \ref{unfrankProp} shows that the unfolded differential rank has the desired properties. 
  Indeed, (i) and (ii) show that the (principal) differential rank is contained in the (principal) unfolded differential rank and that 
  the only subgroups which were added in the process are groups which do not contain $c$.
  Moreover, (v) shows that taking the unfolded differential rank generalizes the idea of translating $\psi$ by a gap.
 \end{Rem}

 For a pre-differential-valued field $(K,v,D)$ with asymptotic couple $(G,\psi)$, we say that a coarsening $w$ of $v$ lies in the unfolded differential rank 
 of $(K,v,D)$ if $G_w$ lies in unfolded $\psi$-rank of $G$. We can give an analog of Theorem \ref{characterizationofdiffrankThm} for the unfolded differential rank, which characterizes 
 the convex subrings of $K$  lying in the unfolded differential rank:

 \begin{Thm}\label{charofunfrk}
  Let $(K,v,D)$ be a pre-differential-valued field and $w$ a coarsening of $v$. Then $w$ is in the unfolded differential rank  of $(K,v,D)$
if and only if $(K,w,D)$ is a pre-differential-valued field.
 \end{Thm}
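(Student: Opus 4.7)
The approach is to reduce Theorem \ref{charofunfrk} to Proposition \ref{coarseningprediffvalprop} by rescaling the derivation. Recall from the preliminaries of Section \ref{differentialranksection} that for any $a\in K^*$, the field $(K, v, aD)$ is again pre-differential-valued, with asymptotic couple $(G, \psi + v(a))$; thus, given any $g_0\in G^{\neq 0}$, choosing $a\in K^*$ with $v(a) = -\psi(g_0)$ replaces the asymptotic couple by $(G, \psi_{g_0})$. Moreover, the property of being pre-differential-valued with respect to a coarsening $w$ is invariant under this rescaling, since $w(aD(x)) = w(a) + w(D(x))$ and $w(\phi_{aD}(y)) = w(a) + w(\phi(y))$ for all relevant $x, y\in K^*$, so axiom (DV2) for $w$ is equivalent for $D$ and $aD$.

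I would assume $w\neq v$ (the case $w = v$ being trivial), so that $G_w$ is a non-trivial convex subgroup of $G$. Fix any $g_0\in G_w\setminus\{0\}$, choose $a\in K^*$ with $v(a) = -\psi(g_0)$, and set $D':= aD$. Then $(K, v, D')$ is pre-differential-valued with asymptotic couple $(G, \psi_{g_0})$. By Lemma \ref{cgarbitrarilysmall}, any cut point $c_{g_0}$ of $\psi_{g_0}$ satisfies $v_G(c_{g_0})\geq v_G(g_0)$; since $g_0\in G_w$ and $G_w$ is convex, $c_{g_0}\in G_w$. Hence a regular cut point $y_{g_0}$ of $(K, v, D')$ can be chosen in $\unitsw$, which is precisely the hypothesis needed to invoke Proposition \ref{coarseningprediffvalprop} for $(K, v, D')$.

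Applying Proposition \ref{coarseningprediffvalprop} to $(K, v, D')$ yields the following chain of equivalences: $(K, w, D)$ is pre-differential-valued iff $(K, w, D')$ is (by scaling invariance), iff $w$ lies in the differential rank of $(K, v, D')$, iff $G_w$ is $\psi_{g_0}$-compatible (by Theorem \ref{diffrankthreelevelsprop}), iff $G_w\in S_{g_0}$. To conclude, note that $G_w\in S_{g_0}$ trivially implies $G_w\in S$; conversely, if $G_w\in S$, then $G_w\in S_g$ for some $g\in G^{\neq 0}$, which by $\psi_g$-compatibility forces $g\in G_w$ (since $\psi_g(g)=0\in G_w$), and Lemma \ref{unfranklem}(i) then gives $G_w\in S_{g_0}$ for every $g_0\in G_w\setminus\{0\}$.

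The main obstacle is setting up the reduction correctly: picking the right scaling factor $a\in K^*$, verifying the scaling-invariance of pre-differential-valuedness at the $w$-level, and showing that the translated asymptotic couple $(G, \psi_{g_0})$ admits a regular cut point contained in $G_w$ so that the cut point condition of Proposition \ref{coarseningprediffvalprop} is met. Once this reduction is in place, the remainder is essentially a diagram chase through results already established in the paper.
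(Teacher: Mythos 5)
Your proof is correct and follows essentially the same route as the paper's: both rescale the derivation to $aD$ so that the asymptotic couple becomes $(G,\psi_{g})$ for a suitable $g\in G_w$, use Lemma \ref{cgarbitrarilysmall} plus convexity to place the cut point in $\unitsw$, and then invoke Proposition \ref{coarseningprediffvalprop} (the paper routes this through Theorem \ref{characterizationofdiffrankThm}, which packages the same content). If anything, your write-up is slightly more careful than the paper's: you take $v(a)=-\psi(g_0)$ (consistent with the stated convention that $(K,v,aD)$ has asymptotic couple $\psi+v(a)$, where the paper writes $v(a)=\psi(g)$), and you make explicit both the $w$-level invariance of (DV2) under rescaling and the equivalence $G_w\in S_{g_0}\Leftrightarrow G_w\in S$ via Lemma \ref{unfranklem}(i).
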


  \begin{proof}
    Assume $w$ is in the unfolded differential rank of $(K,v,D)$. That means there is $g\in G$ such that 
    $G_w$ is in the $\psi_g$-rank of $G$. Now take $a\in K$ with $v(a)=\psi(g)$. $(K,v,aD)$ is a pre-differential-valued 
    field with asymptotic couple $(G,\psi_g)$ and $\ringw$ is in the differential rank of 
    $(K,v,aD)$. By Theorem \ref{characterizationofdiffrankThm}, it follows that $(K,w,aD)$ is a pre-differential valued field, so $(K,w,D)$ is also 
    a pre-differential-valued field. Conversely, assume $(K,w,D)$ is  a pre-differential-valued field, $w\neq v$,
    and take $a\in\unitsw\backslash\ringv$. Set $g:=v(a)$. By Lemma \ref{cgarbitrarilysmall}, we have 
    $v_G(c_g)\geq v_G(g)$, so by convexity $c_g\in G_w$. By Proposition \ref{residuefieldprop}(3) and Theorem \ref{characterizationofdiffrankThm}, it follows that 
    $\ringw$ is in the differential rank of $(K,v,aD)$, so $G_w$ is in the $\psi_g$-rank of 
    $G$, so it is in the unfolded $\psi$-rank of $G$.
  \end{proof}

 Finally, we want to connect the unfolded differential rank with the exponential rank of 
 exponential ordered fields.  
 We recall the following definition from  \cite[Section 3]{Kuhlmanncontraction}: 
 
 \begin{Def}\label{contractiondef}
  A \textbf{centripetal precontraction} on $G$ is a map $\chi: G\to G$ 
  satisfying the following conditions for all $g,h\in G$: 
 \begin{enumerate}[(i)]
 \item $\chi(g)=0\Leftrightarrow g=0$.
 \item $\chi$ preserves $\leq$.
 \item $\chi(g)=-\chi(-g)$.
 \item If $v_G(g)=v_G(h)$ and $g,h$ have the same sign, 
 then $\chi(g)=\chi(h)$.
 \item $\vert \chi(g)\vert<\vert g\vert$.
\end{enumerate}
 \end{Def}

 As  was already noted in \cite[Section 5]{Asch}, asymptotic couples are related to precontractions.
Indeed, assume that $(G,\psi)$ has asymptotic integration; then we can define $\int g:=D_G^{-1}(g)$ for any $g\in G$ (note that $D_G$ is injective because 
it is strictly increasing). We can then define the map:
$\chi(g):G^{<0}\to G, g\mapsto\int\psi(g)$. We extend this map to $G$ by setting $\chi(0):=0$ and $\chi(g)=-\chi(-g)$ for every $g>0$. If $(G,\psi)$ is H-type, it is then easy 
 to see that $\chi$ is a centripetal precontraction. If $(G,\psi)$ is not H-type, then $\chi$ is not  a precontraction because axiom 
(ii) above is not satisfied, but 
the following result still holds:

\begin{Prop}\label{exprank}
 If $(G,\psi)$ has asymptotic integration, then the unfolded $\psi$-rank (respectively, the principal unfolded $\psi$-rank) of $G$ coincides with the $\chi$-rank (respectively, the principal $\chi$-rank) of $G$.
\end{Prop}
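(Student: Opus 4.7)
The plan is to prove the proposition by establishing two inclusions between the set of $\chi$-compatible convex subgroups of $G$ and the unfolded $\psi$-rank $S$. Two preliminary ingredients will drive the whole argument. First, the key identity
\[
\chi(h) \;=\; \psi_g(h) - \psi_g(\chi(h))
\]
valid for every $g \in G$ and every $h \in G^{\neq 0}$ with $\chi(h)\neq 0$; this is obtained from $D_G(\chi(h)) = \psi(h)$, i.e.\ $\chi(h) = \psi(h) - \psi(\chi(h))$, by subtracting $\psi(g)$ from each $\psi$-term. Second, for every $g \in G^{\neq 0}$, $\chi(g)$ is a cut point for $\psi_g$: indeed $\psi_g(\chi(g)) = \psi(\chi(g)) - \psi(g) = -\chi(g)$, so $\psi_g(\chi(g)) \sim \chi(g)$ and Lemma \ref{lemmacutpoint}(ii) applies.

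For the inclusion ``unfolded $\psi$-rank $\subseteq$ $\chi$-rank'', suppose $H$ is $\psi_g$-compatible for some $g \in G^{\neq 0}$. To check that $H$ is $\chi$-compatible, take $h \in G^{\neq 0}$. If $h \in H$ then $\psi_g(h) \in H$; assuming towards a contradiction that $\chi(h) \notin H$, Lemma \ref{compatiblecontainsf} gives $\psi_g(\chi(h)) < H$, so $\psi_g(\chi(h))$ is archimedean-larger than every element of $H$. Combining with $\chi(h) = \psi_g(h) - \psi_g(\chi(h))$ (where the first summand is in $H$), one obtains $\chi(h) \sim \psi_g(\chi(h))$, so $\chi(h)$ also lies outside $H$ in the same archimedean class as $\psi_g(\chi(h))$. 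A further analysis using Proposition \ref{behaviourofDG} applied at the cut point $\chi(g)$ then forces $h$ itself outside $H$, contradicting the assumption. The converse direction $\chi(h) \in H \Rightarrow h \in H$ is symmetric.

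For the reverse inclusion ``$\chi$-rank $\subseteq$ unfolded $\psi$-rank'', take $H$ non-trivial $\chi$-compatible, fix any $g \in H^{\neq 0}$, and show $H$ is $\psi_g$-compatible by verifying the two conditions of Lemma \ref{compatiblecontainsf}. Condition (i) reduces to $\chi(g) \in H$, which is immediate since $g \in H$ and $H$ is $\chi$-compatible. For the downward condition (ii), argue by contrapositive: given $f$ with $v_G(\chi(g)) > v_G(f)$ and $f \notin H$, $\chi$-compatibility yields $\chi(f) \notin H$, and the identity $\chi(f) = \psi_g(f) - \psi_g(\chi(f))$ together with the archimedean estimates of Proposition \ref{behaviourofDG} then forces $\psi_g(f) \notin H$. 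The principal rank assertion is then a bookkeeping consequence: by Proposition \ref{charofQProp} and the analogous characterization of principal $\chi$-compatible subgroups, the principal subgroup generated by $g$ is in both formalisms the smallest compatible subgroup containing $g$, so the two principal ranks match.

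The main obstacle is the downward condition in the reverse inclusion: one must systematically rule out the possibility that $\chi(f) \notin H$ while $\psi_g(f) \in H$, and this requires delicate control over archimedean classes via Lemma \ref{Fundlemofascouples} and the case analysis of Proposition \ref{behaviourofDG}, made more subtle by the fact that we do \emph{not} assume $(G,\psi)$ is $H$-type (so $\chi$ need not be a genuine precontraction and the otherwise available monotonicity shortcut is unavailable).
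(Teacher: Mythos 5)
Your proposal is correct and follows essentially the same route as the paper: the identity obtained from $D_G(\chi(h))=\psi(h)$, the archimedean estimate of Lemma \ref{Fundlemofascouples} applied to $\psi(g)-\psi(\chi(h))$ (this, rather than Proposition \ref{behaviourofDG}, is the real workhorse), the observation that $\chi(g)$ is a cut point for $\psi_g$, and the compatibility criterion of Lemma \ref{compatiblecontainsf}. Two remarks on the loose ends: the unproven ``further analysis'' in your first inclusion is not needed, since $h\in H\Rightarrow\chi(h)\in H$ already follows from $v_G(\chi(h))\geq v_G(h)$ (Lemma \ref{cgarbitrarilysmall}) and convexity, while the genuinely substantive sub-direction is the one you dismiss as ``symmetric'' --- it does go through directly, via $\chi(h)\in H\Rightarrow\psi_g(\chi(h))\in H\Rightarrow\psi_g(h)=\chi(h)+\psi_g(\chi(h))\in H\Rightarrow h\in H$ (watch the harmless sign flip in your identity when $h>0$, where $\chi(h)=\psi_g(\chi(h))-\psi_g(h)$).
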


\begin{proof}
 We start by showing the following claim:
 \begin{Claim*} For any $g,h\in G^{\neq0}$, if $v_G(g)\geq v_G(\chi(h))$ then $\psi_g(h)\sim\chi(h)$.
\end{Claim*}
 \begin{proof}
   Because $\chi(-h)=-\chi(h)$ and $\psi_g(h)=\psi_g(-h)$, it is sufficient to consider the case 
   $h<0$.
   By definition of $\chi$, we have $\chi(h)+\psi(\chi(h))=\psi(h)$, hence 
$\chi(h)=\psi(h)-\psi(\chi(h))=\psi_g(h)+\psi(g)-\psi(\chi(h))$ hence $\chi(h)-\psi_g(h)=\psi(g)-\psi(\chi(h))$.
  By Lemma \ref{Fundlemofascouples}, we have 
$v_G(\psi(g)-\psi(\chi(h)))>v_G(g-\chi(h))$, so if $v_G(g)\geq v_G(\chi(h))$ we have $v_G(\chi(h)-\psi_g(h))=v_G(\psi(g)-\psi(\chi(h)))> v_G(\chi(h))$ which implies 
$\chi(h)\sim\psi_g(h)$.
\end{proof}
 
Now let us prove the proposition. 
 Let $H$ be a convex subgroup of $G$. Assume $H$ is $\chi$-compatible and take $g\in H$, $g\neq0$. 
By Lemma \ref{cgarbitrarilysmall}, $v_G(c_g)\geq v_G(g)$, hence $c_g\in H$ by convexity. Now take $h\in G$ such that $\psi_g(h)\in H$. If 
$v_G(\chi(h))\geq v_G(g)$ then by convexity $\chi(h)\in H$; if 
$v_G(g)\geq v_G(\chi(h))$ the claim implies $\chi(h)\sim\psi_g(h)$, hence by convexity $\chi(h)\in H$. In any case we have $\chi(h)\in H$, and since $H$ is 
$\chi$-compatible it follows that $h\in H$. We proved $c_g\in H$ and $\psi_g(h)\in H\Rightarrow h\in H$, so by 
Lemma \ref{compatiblecontainsf} $H$ is $\psi_g$-compatible, which means that $H$ is in the unfolded $\psi$-rank of $G$.
Conversely, assume $H\in S$ and take $h\in G$. Since $H\in S$, there is $g\in G$ with $H\in S_g$ and by Lemma \ref{unfranklem}(i) we may assume 
$v_G(g)\geq v_G(\chi(h))$. By the claim, we then have $\psi_g(h)\in H\Leftrightarrow \chi(h)\in H$. Since $H\in S_g$, this implies 
$h\in H\Leftrightarrow \chi(h)\in H$, so $H$ is $\chi$-compatible. This proves that $S$ is the set of $\chi$-compatible convex subgroups of $G$.
It then follows from Proposition \ref{charofQProp} that $Q$ is the set of $\chi$-principal 
convex subgroups of $G$.
\end{proof}
 
 Proposition \ref{exprank} yields an immediate corollary for exponential fields. We refer to \cite{Kuhlmann} for the definition of $v$-compatible $(GA),(T_1)$-exponential:
 
 \begin{Cor}\label{unfoldedrankisexponentialrank}
  Let $(K,\leq,D)$ be a H-field, let $v$ be the natural valuation associated to $\leq$ and assume that there exists a $v$-compatible $(GA),(T_1)$-exponential $\exp$ on $K$ such that 
$\phi(\exp(a))=D(a)$ for any $a\in K$. Then the exponential rank of $(K,\leq,\exp)$ is equal to the 
unfolded differential rank of $(K,\leq,D)$.
 \end{Cor}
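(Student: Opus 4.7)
The plan is to recognize the contraction $\chi$ induced on $G$ by the logarithm as playing, for the asymptotic couple $(G,\psi)$, exactly the role that the map $D_G^{-1}\circ\psi$ plays in Proposition \ref{exprank}. Once this identification is made, the corollary reduces to that proposition and to the already-established identification of the exponential rank with a $\chi$-rank. Concretely, Example \ref{exemplesrang}(e) together with Proposition \ref{fieldgroupchain} says that the exponential rank of $(K,\leq,\exp)$ is equal to the $\chi$-rank of $(G,\leq)$, where $\chi$ is the contraction on $G$ defined by $\chi(v(a)):=v(\log|a|)$ for $a\in K^{\neq 0}$ with $v(a)\neq 0$ (and $\chi(0):=0$). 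Therefore it suffices to show that the $\chi$-rank (respectively principal $\chi$-rank) of $G$ coincides with the unfolded $\psi$-rank (respectively principal unfolded $\psi$-rank) of $G$.

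The crux is the identity $D_G(\chi(g))=\psi(g)$ for every $g\in G^{<0}$, which I would verify by a direct computation. Pick $a\in K^{>0}$ with $v(a)=g$, and set $b:=\log a$. The compatibility hypothesis $\phi(\exp c)=D(c)$, applied at $c=b$, yields $D(b)=\phi(\exp b)=\phi(a)$. Since $\psi(v(b))=v(D(b)/b)=v(D(b))-v(b)$, we get
\[
D_G(v(b))=v(b)+\psi(v(b))=v(D(b))=v(\phi(a))=\psi(g),
\]
and by construction $\chi(g)=v(b)$, so the identity holds. The case $g>0$ then reduces to the previous one via $\chi(-g)=-\chi(g)$ and $\psi(-g)=\psi(g)$, exactly as in the opening reduction of the proof of Proposition \ref{exprank}.

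Third, I would observe that the proof of Proposition \ref{exprank} never uses asymptotic integration except to ensure that the contraction map $\chi$ is well defined on $G^{\neq 0}$ and satisfies $D_G\circ\chi=\psi$: the central claim there, namely that $v_G(g)\geq v_G(\chi(h))$ implies $\psi_g(h)\sim\chi(h)$, is deduced purely from the rewriting $\chi(h)-\psi_g(h)=\psi(g)-\psi(\chi(h))$ and Lemma \ref{Fundlemofascouples}. Consequently, feeding our externally defined $\chi$ (the logarithmic contraction) into the proof of Proposition \ref{exprank} shows that the $\chi$-compatible convex subgroups of $G$ are exactly the members of the unfolded $\psi$-rank, and that the $\chi$-principal ones are exactly the members of the principal unfolded $\psi$-rank. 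Combining this with the first step yields the corollary.

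The main obstacle is precisely this point: Proposition \ref{exprank} is stated under the hypothesis that $(G,\psi)$ admits asymptotic integration, which is not among the assumptions of the corollary (and does not obviously follow from the existence of a $(GA),(T_1)$-exponential compatible with $D$, since the logarithm only provides antiderivatives for elements of the form $\phi(a)=D(a)/a$). The resolution is to recognize that the hypothesis of asymptotic integration is only used to manufacture a $\chi$; once one has an $\chi$ from elsewhere that satisfies $D_G\circ\chi=\psi$, the proof of Proposition \ref{exprank} applies verbatim. Making this dependency explicit (ideally by repeating the short key calculation rather than merely citing Proposition \ref{exprank}) is the one delicate point of the argument.
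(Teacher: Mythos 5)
Your argument is correct and follows essentially the same route as the paper's own proof: identify the contraction $\chi$ induced by $\log$ on $G$ (so that the exponential rank is the $\chi$-rank of $G$), verify the identity $D_G(\chi(g))=\psi(g)$ for $g\in G^{<0}$ together with $\chi(-g)=-\chi(g)$, and conclude by Proposition \ref{exprank}. Your additional observation --- that Proposition \ref{exprank} is stated under the hypothesis of asymptotic integration, which is not assumed in the corollary, but that its proof only uses the existence of an odd map $\chi$ with $D_G\circ\chi=\psi$ on $G^{<0}$ --- is a legitimate point that the paper's proof passes over silently, and your resolution of it is the right one.
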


 \begin{proof}
    We know from \cite[Theorem 3.25]{Kuhlmann} that $\log=\exp^{-1}$ induces a map $\chi$ on $G$ and that the exponential rank of 
  $K$ is equal to the $\chi$-rank of $G$ (note that this also follows from our Proposition \ref{fieldgroupchain}). With our assumption, it is easy to check that 
   $\chi(g)+\psi(\chi(g))=\psi(g)$ for all $g\in G^{<0}$. By definition of 
   $D_G$, this implies $D_G(\chi(g))=\psi(g)$, i.e $\chi(g)=\int\psi(g)$ for all $g\in G^{<0}$. Moreover, one can easily check that 
   $\chi(g)=-\chi(-g)$ for all $g\in G$. Therefore, $\chi$ coincides with the map $\chi$ in Proposition \ref{exprank}.
   The claim then follows from Proposition \ref{exprank}.
 \end{proof}

 \section{Derivations on power series}\label{powerseries}

   The goal of this Section is first to answer Question \ref{question} of the introduction, which then allows us to partially answer
  Question \ref{question1}. We then apply our results to construct a derivation on a field of generalized power series so that we obtain a H-field of 
given principal differential rank and principal unfolded differential rank. 
If $(K,v,D)$ is a differential-valued field, then we say that $D$ is a \textbf{H-derivation} if the asymptotic couple associated 
to $(K,v,D)$ is H-type. In this section, we will only consider H-derivations. Therefore,
all the asymptotic couples appearing in this section are H-type.
Section \ref{derivationfromacsection} answers Question \ref{question} and Section \ref{Hderivationsection} answers a variant of Question \ref{question1} where 
$D$ is required to be a H-derivation. Section 
\ref{Hardytypesection} answers another variant of Question \ref{question1} where we require $D$ to be of Hardy type
(see Definition \ref{Hardytypederivation}),
 which is connected to  the work done in 
\cite{KuhlmannMatusinski} and \cite{KuhlmannMatusinski2}.

We want to recall a few facts about the theory of valued groups. Let $v:G\to\Gamma\cup\{\infty\}$ be a valuation.
For all $\gamma\in\Gamma$, 
    the sets $(G_v)^{\gamma}:=\{g\in G\mid v(g)\geq\gamma\}$ and $(G_v)_{\gamma}:=\{g\in G\mid v(g)>\gamma\}$ are subgroups of $G$. The quotient groups 
$(B_v)_{\gamma}:=(G_v)^{\gamma}/(G_v)_{\gamma}$, $\gamma\in\Gamma$, are called the \textbf{components} of the valued group $(G,v)$. 
 The pair $(\Gamma,((B_v)_{\gamma})_{\gamma\in\Gamma})$ is called the 
\textbf{skeleton} of the valued group $(G,v)$.
  Given an ordered set $\Gamma$ and a family of groups 
  $(B_{\gamma})_{\gamma\in\Gamma}$, we define the \textbf{Hahn product} of the family 
  $(B_{\gamma})_{\gamma\in\Gamma}$, denoted $\Hahn_{\gamma\in\Gamma}B_{\gamma}$,
as the valued group $(G,v)$, where 
  $G:=\{g=(g_{\gamma})_{\gamma\in\Gamma}\in\prod_{\gamma\in\Gamma}B_{\gamma}\mid\supp(g)\text{ is well-ordered}\}$ and $v(g):=\min\supp(g)$. Here $\supp(g)$ 
  denotes the support of $g$, i.e $\supp((g_{\gamma})_{\gamma\in\Gamma})=\{\gamma\in\Gamma\mid g_{\gamma}\neq0\}$.
  $(G,v)$ is a valued group with skeleton $(\Gamma,(B_{\gamma})_{\gamma\in\Gamma})$. 
 It is usual to denote elements of $\Hahn_{\gamma\in\Gamma}B_{\gamma}$ as formal sums 
$\sum_{\gamma\in\Gamma}g_{\gamma}$, where $g_{\gamma}\in B_{\gamma}$ for all $\gamma\in\Gamma$.
  We recall Hahn's embedding theorem:

\begin{Thm}[Hahn's embedding theorem]\label{Hahnsembeddingtheorem}
 Let $(G,v)$ be a divisible valued group satisfying the condition $v(ng)=g$ for all $g\in G$ and $n\in\Z$ with $n\neq0$. Denote by $(\Gamma,(B_{\gamma})_{\gamma\in\Gamma})$
  the skeleton of the valued group $(G,v)$. Then $(G,v)$ is embeddable as a valued group into 
  the Hahn product $\Hahn_{\gamma\in\Gamma}B_{\gamma}$.
\end{Thm}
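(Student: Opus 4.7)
The plan is to construct the embedding $\phi:G\to\Hahn_{\gamma\in\Gamma}B_{\gamma}$ by a transfinite leading-term peeling, using Zorn's lemma to guarantee coherence across different elements.

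First I would exploit the hypotheses to put a $\Q$-vector space structure on everything in sight. The condition $v(ng)=v(g)$ for $n\neq 0$ forces $G$ to be torsion-free, since $ng=0$ would give $v(g)=\infty$, hence $g=0$. Combined with divisibility, $G$ is a $\Q$-vector space. Each $(G_v)^{\gamma}$ and $(G_v)_{\gamma}$ is easily checked to be a $\Q$-subspace (divisibility plus torsion-freeness are both inherited), so every component $B_{\gamma}=(G_v)^{\gamma}/(G_v)_{\gamma}$ is a $\Q$-vector space as well. Using the axiom of choice I would then fix, once and for all, a family of $\Q$-linear sections $s_{\gamma}:B_{\gamma}\to(G_v)^{\gamma}$ of the canonical quotient maps. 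Intuitively, $s_{\gamma}$ gives a canonical lift of an abstract $\gamma$-component to an actual element of $G$.

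Given these sections, the embedding is defined by transfinite recursion. For $g\in G$, set $g^{(0)}:=g$. At a successor stage, if $g^{(\alpha)}\neq 0$, let $\gamma_{\alpha}:=v(g^{(\alpha)})$, let $b_{\gamma_{\alpha}}$ be the class of $g^{(\alpha)}$ in $B_{\gamma_{\alpha}}$, and set $g^{(\alpha+1)}:=g^{(\alpha)}-s_{\gamma_{\alpha}}(b_{\gamma_{\alpha}})$; then $v(g^{(\alpha+1)})>\gamma_{\alpha}$ by construction, so the $\gamma_{\alpha}$ form a strictly increasing transfinite sequence. The candidate image is $\phi(g):=\sum_{\alpha}b_{\gamma_{\alpha}}$, supported on the well-ordered set $\{\gamma_{\alpha}\}$. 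That $\phi$ is $\Q$-linear and valuation-preserving would follow from the $\Q$-linearity of the sections and from the fact that $v(\phi(g))=\gamma_{0}=v(g)$.

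The hard part is what to do at limit ordinals $\lambda$: the natural candidate for $g^{(\lambda)}$ is the ``pseudo-limit'' of the sequence $(g^{(\alpha)})_{\alpha<\lambda}$, but $G$ need not be spherically complete, so no such limit need exist inside $G$. This is where I would invoke Zorn's lemma. Consider the poset whose elements are pairs $(H,\phi_H)$ with $H$ a $\Q$-subspace of $G$ and $\phi_H:H\to\Hahn_{\gamma\in\Gamma}B_{\gamma}$ a $\Q$-linear, valuation-preserving embedding whose $\gamma$-coefficient on $h\in H$ with $v(h)=\gamma$ is forced to be the class of $h$ in $B_{\gamma}$, ordered by extension. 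The chain condition is routine. Let $(H^{\ast},\phi^{\ast})$ be maximal. If $H^{\ast}\neq G$, pick $g\in G\setminus H^{\ast}$ and attempt to extend to $H^{\ast}\oplus\Q g$ by running the peeling process for $g$ relative to $(H^{\ast},\phi^{\ast})$: at each successor stage, either $g^{(\alpha)}\in H^{\ast}$ and we are done by linearity, or else we continue as above; at each limit stage, the partial sum already defined lies in the Hahn product and we use the induction hypothesis together with the coherence built into $\phi^{\ast}$ to argue that the process can continue on a well-ordered index set. The well-orderedness of the resulting support, and hence membership in $\Hahn_{\gamma\in\Gamma}B_{\gamma}$, is the main technical obstacle; the key observation is that the strictly increasing sequence $(\gamma_{\alpha})$ cannot become cofinal in a non-well-ordered subset of $\Gamma$ because its image supports an element of the Hahn product constructed by the previous step of the Zorn chain. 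Contradicting maximality then forces $H^{\ast}=G$, giving the desired embedding.
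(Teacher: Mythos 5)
The paper states Hahn's embedding theorem without proof (it is recalled as a classical result), so your proposal has to stand on its own. The preliminary reductions are fine: $v(ng)=v(g)$ for $n\neq0$ forces torsion-freeness, so $G$, the subgroups $(G_v)^{\gamma}$ and $(G_v)_{\gamma}$, and the components $B_{\gamma}$ are all $\Q$-vector spaces and $\Q$-linear sections $s_{\gamma}$ exist; the successor-stage peeling and the strict increase of the $\gamma_{\alpha}$ are also correct. But there is a genuine gap exactly where the theorem is hard, namely at the limit stages. Your ``key observation'' addresses a non-issue: a strictly increasing ordinal-indexed sequence in $\Gamma$ automatically has well-ordered image, so membership of the partial sums in the Hahn product was never in doubt. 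The real obstruction is that $g^{(\lambda)}$ would have to be $g$ minus a pseudo-limit of the partial sums $\sum_{\alpha<\beta}s_{\gamma_{\alpha}}(b_{\gamma_{\alpha}})$, and such an element need not exist in $G$; when it does not, the recursion halts with no value of $\phi(g)$ defined, and simply truncating there destroys additivity (if $h\neq0$ has value exceeding every $\gamma_{\alpha}$, the truncated images of $g$ and of $-g+h$ cancel while $\phi(h)\neq0$). The appeal to ``the induction hypothesis together with the coherence built into $\phi^{\ast}$'' does not produce the missing element: your peeling subtracts the section-lifts $s_{\gamma_{\alpha}}(b_{\gamma_{\alpha}})$, which are elements of $G$ with no reason ever to lie in, or be controlled by, $H^{\ast}$ (for the same reason, ``$g^{(\alpha)}\in H^{\ast}$, done by linearity'' is not quite right either, since the subtracted lifts need not be in the domain of $\phi^{\ast}$).

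The standard way to close the gap is to make the extension step interact with $H^{\ast}$ rather than with the sections. Given a maximal $(H^{\ast},\phi^{\ast})$ and $g\notin H^{\ast}$, consider the value set $v(g-H^{\ast})\subseteq\Gamma$. If it has a maximum $v(g-h_0)$, set $\phi(g):=\phi^{\ast}(h_0)+c$ where $c$ is supported at $v(g-h_0)$ with coefficient the class of $g-h_0$ in $B_{v(g-h_0)}$. If it has no maximum, choose $h_{\alpha}\in H^{\ast}$ with $v(g-h_{\alpha})$ strictly increasing and cofinal in $v(g-H^{\ast})$; then $(\phi^{\ast}(h_{\alpha}))_{\alpha}$ is pseudo-Cauchy in $\Hahn_{\gamma\in\Gamma}B_{\gamma}$, which is spherically complete, and one takes $\phi(g)$ to be a pseudo-limit. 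Spherical completeness of the target is the indispensable ingredient here, and your argument never invokes it; without it (or an equivalent device) the extension past a maximal $(H^{\ast},\phi^{\ast})$ cannot be carried out, so as written the proposal does not prove the theorem.
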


Throughout this section, $k$ will denote a field, $G$ an ordered abelian group and $K:=k((G))$ the field of generalized power series generated by $k$ and $G$. 
We denote by $v$ the usual valuation on $K$, i.e. $v(a)=\min\supp(a)$.
 If $k$ is ordered, then  $K$ is also an ordered field, with the order of $K$ being defined as follows:
 we say that $a< b$ if and only if $a_g< b_g$, where $g=v(a-b)$.

\subsection{Defining a derivation of given asymptotic couple}\label{derivationfromacsection}    
    
   Let $k$ be a field and let $(G,\psi)$ be a H-type asymptotic couple.
    We would like to define a derivation $D$ on 
   $K$ making $(K,v,D)$ 
   a pre-differential-valued  field whose associated asymptotic couple is $(G,\psi)$.  If $k$ happens to be ordered,
   then we  want $(K,\leq,D)$ 
to be a H-field.  The idea is to view the map $\psi:G\to\Psi\cup\{\infty\}$ as a valuation ($\psi$ is extended to $G$ by setting $\psi(0):=\infty$). 
 We first assume that the valued group $(G,\psi)$ is 
maximally valued, i.e $(G,\psi)$ is isomorphic as a valued group to the Hahn product  $H:=\mathrm{H}_{\lambda\in\Psi}\comppsi$. We will later see that this condition is actually not essential.
 We shall write elements of $G$ as sums $g:=\sum_{\lambda\in\Psi}g_{\lambda}$ with $g_{\lambda}\in \comppsi$.  Note that $\psi(g)=\min\{\lambda\in\Psi\mid\lambda\in\supp(g)\}$. We shall write 
 $B_{\lambda}$ instead of $\comppsi$ for every $\lambda\in\Psi$.

   To define a derivation on 
      $K$ we proceed in three steps:
      Step 1: define $D$ on the fundamental monomials, i.e define $D(t^{g_{\lambda}})$ for each $g_{\lambda}\in B_{\lambda}$ 
      for every $\lambda\in\Psi$.
      Step 2: extend $D$ to all monomials by using a strong Leibniz rule.
      Step 3: extend $D$ to $K$ by strong linearity.      
      This idea is inspired by the work in \cite{KuhlmannMatusinski}. In \cite{KuhlmannMatusinski},
      the authors assumed that the map $D$ was already given on the fundamental monomials and gave
      conditions for this map to be extendable to the whole field. They only do this in the particular case where $G$ 
      is a Hahn product of copies of $\R$. Here we do it in a more general setting since we do not make any assumption 
      on $(G,\psi)$ except that it is a H-type asymptotic couple; moreover,
       we define $D$ explicitly on the 
      fundamental monomials. The idea for step 1 comes from the following remark: if 
      $(K,v,D)$ is a pre-differential-valued field, then for any $a\in K$ we have 
      $v(D(a))=v(a)+\psi(v(a))$. Therefore, we naturally want to define $D(t^{(g_{\lambda})})$ as an element with valuation 
      $g_{\lambda}+\lambda$  (note that this expression makes sense because $\lambda\in\Psi\subseteq G$). Assume that a family $u_{\lambda}:B_{\lambda}\to k$ ($\lambda\in\Psi$) of group homomorphisms has been given. Define then 
      $D(t^{g_{\lambda}})=u_{\lambda}(g_{\lambda})t^{g_{\lambda}+\lambda}$. 
Note that a similar idea was already used 
      in \cite{Aschdries3}, but only in the case where $G$ is divisible and admits a valuation basis, 
      which is a strong restriction. Now we use the strong Leibniz rule to extend $D$ to the set of all monomials of $K$: let 
       $g=\sum_{\lambda\in\Psi}g_{\lambda}\in G$; the strong Leibniz rule implies:

      \[D(t^g):=t^g\sum_{\lambda\in\Psi}\frac{D(t^{g_{\lambda}})}{t^{g_\lambda}}=t^g\sum_{\lambda\in\Psi}u_{\lambda}(g_{\lambda})t^{\lambda}.\]

      Note that the support of the family $(u_{\lambda}(g_{\lambda})t^{\lambda})_{\lambda\in\Psi}$ is included 
      in the support of $g$ so it is well-ordered, which proves that $D(t^g)$ is indeed an element of $K$. Note also that since the support of 
$0$ is $\varnothing$ we have $D(t^0)=0$.
Finally, we apply strong linearity to extend $D$ to all elements of $K$: for 
      $a=\sum_{g\in G}a_gt^g\in K$, we define: 
      \[\tag{\dag}\label{defofder}D(a):=\sum_{g\in G}a_gD(t^g)=\sum_{0\neq g\in G}\sum_{\lambda\in\Psi}a_gu_{\lambda}(g_{\lambda})t^{g+\lambda}.\] 

      Because formula (\ref{defofder}) is an infinite sum, it is not clear that this expression is well-defined. In order to make sense 
      of formula (\ref{defofder}), we recall the notion of summability introduced in \cite{Fuchs}.       
Let $(a_i)_{i\in I}$ be a family of elements of $K$ with 
$a_i=\sum_{g\in G}a_{i,g}t^g$. We say that the family $(a_i)_{i\in I}$ is \textbf{summable} if 
the following two conditions are satisfied: 
\begin{enumerate}[(i)]
 \item The set $A:=\bigcup_{i\in I}\supp(a_i)$ is well-ordered
 \item for any $g\in A$, the set $A_g:=\{i\in I\mid a_{i,g}\neq0\}$ is finite.
\end{enumerate}
If the family $(a_i)_{i\in I}$ is summable, we define its sum as $\sum_{g\in G}b_gt^g\in K$, where $b_g=0$ if 
$g\notin A$ and $b_g=\sum_{i\in A_g}a_{i,g}$ if $g\in A$.

      Now let $a=\sum_{g\in G}a_gt^g$. We want to prove
      that the 
      family $(a_gD(t^g))_{g\in \supp(a)}$ is summable.
      We set $A:=\bigcup_{g\in \supp(a)}\supp(D(t^g))=\{g+\lambda\mid g\in \supp(a),\lambda\in \supp(g)\}$ and 
      for any $g\in \supp(a)$ and any $\lambda\in \supp(g)$,
    $A_{g,\lambda}:=\{(h,\mu)\mid h\in \supp(a),\mu\in \supp(h), g+\lambda=h+\mu\}$.
      The key to summability is the following fact (see \cite[Proposition 2.3(1)]{Aschdries2}):
      
       \begin{Lem}\label{keytosummability}
    
    For any $\lambda,\mu\in\Psi$ with $\mu\neq\lambda$, $\psi(\lambda-\mu)>\min(\lambda,\mu)$.
   \end{Lem}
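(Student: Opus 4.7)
The plan is to derive the inequality directly from axiom (AC3), which controls how $\psi(g)$ compares to $\psi(h) + |h|$; the other axioms and the H-type assumption are not needed here.

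First I would note that since $\lambda \neq \mu$, the element $h := \lambda - \mu$ lies in $G^{\neq 0}$, so $\psi(h)$ is defined. Without loss of generality assume $\lambda < \mu$, so that $\min(\lambda,\mu) = \lambda$ and $|\lambda - \mu| = \mu - \lambda$.

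The crucial step is to apply (AC3) with the right choice of elements. Since $\mu \in \Psi$, pick $b \in G^{\neq 0}$ such that $\psi(b) = \mu$. Then (AC3), applied with $g := b$ and $h := \lambda - \mu$, yields
\[
\mu \;=\; \psi(b) \;<\; \psi(\lambda - \mu) + |\lambda - \mu| \;=\; \psi(\lambda - \mu) + (\mu - \lambda),
\]
which rearranges to $\psi(\lambda - \mu) > \lambda = \min(\lambda,\mu)$, as desired.

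The only thing to be careful about is the choice of witness: applying (AC3) with an $a$ such that $\psi(a) = \lambda$ (the smaller of the two) would give only $\psi(\lambda - \mu) > 2\lambda - \mu$, which is weaker than what is claimed. So the key observation, and the one subtle point of the argument, is that one must use the element realizing the \emph{larger} of $\lambda$ and $\mu$ as the value of $\psi$ in order for $|\lambda - \mu|$ to be absorbed correctly. Everything else is a single-line consequence of (AC3).
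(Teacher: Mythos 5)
Your proof is correct: since $\Psi=\psi(G^{\neq 0})\subseteq G$ and $\lambda\neq\mu$, the element $\lambda-\mu$ lies in $G^{\neq 0}$, and applying (AC3) with a witness $b$ realizing the larger of the two values gives $\mu=\psi(b)<\psi(\lambda-\mu)+(\mu-\lambda)$, hence $\psi(\lambda-\mu)>\lambda=\min(\lambda,\mu)$. The paper does not prove this lemma itself but cites Aschenbrenner and van den Dries (Proposition 2.3(1) of \emph{H-fields and their Liouville extensions}); your argument, including the observation that one must use the element realizing the larger value so that $|\lambda-\mu|$ cancels correctly, is exactly the standard derivation.
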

   
   The next two lemmas will help us prove the summability of $(a_gD(t^g))_{g\in \supp(a)}$:

   \begin{Lem}\label{Fundlemsummability}
    Let $g,h\in G$ with $g\leq h$ and $\lambda,\mu\in\Psi$ such that $h+\mu\leq g+\lambda$. Then 

$\mu\in \supp(g)\Leftrightarrow \mu\in \supp(h)$.
   \end{Lem}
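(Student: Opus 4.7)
The plan is to show that $h-g$ has $\psi$-valuation strictly greater than $\mu$, so that the $\mu$-component of $h-g$ vanishes and hence $g$ and $h$ agree in that component. This immediately gives the desired equivalence $\mu \in \supp(g) \Leftrightarrow \mu \in \supp(h)$.

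First I would dispose of the trivial case $g = h$. Otherwise $g < h$, and the two hypotheses rearrange to
\[
0 \;<\; h-g \;\leq\; \lambda-\mu
\]
in $G$, which in particular forces $\lambda > \mu$. The crux is now to bound $\psi(h-g)$ from below in terms of $\mu$.

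The key observation is that in an H-type asymptotic couple $\psi$ reverses the order on the positive cone: if $0 < a \leq b$, then applying (ACH) to $-b \leq -a < 0$ together with (AC2) yields $\psi(a) \geq \psi(b)$. Applying this to $a = h-g$ and $b = \lambda-\mu$, and then invoking Lemma \ref{keytosummability} for the pair $\lambda,\mu \in \Psi$, I obtain
\[
\psi(h-g) \;\geq\; \psi(\lambda-\mu) \;>\; \min(\lambda,\mu) \;=\; \mu.
\]

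Finally, under the Hahn-product identification $(G,\psi) \cong \mathrm{H}_{\lambda\in\Psi}B_\lambda$, the valuation is $\psi(x) = \min\supp(x)$. The strict inequality $\psi(h-g) > \mu$ therefore says that $\mu \notin \supp(h-g)$, i.e.\ $g_\mu = h_\mu$ in $B_\mu$, which is exactly the stated equivalence. I do not anticipate any real obstacle here: Lemma \ref{keytosummability} supplies the fundamental estimate, and the H-type axiom provides the order-reversing behavior of $\psi$ on $G^{>0}$ needed to transport that estimate from $\lambda - \mu$ to $h-g$.
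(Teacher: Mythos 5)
Your proof is correct and follows essentially the same route as the paper: reduce to $g<h$, derive $0<h-g\leq\lambda-\mu$, use (ACH) to get $\psi(h-g)\geq\psi(\lambda-\mu)$, apply Lemma \ref{keytosummability} to conclude $\psi(h-g)>\mu$, and read off that the $\mu$-components of $g$ and $h$ agree. The paper phrases the last step as a contradiction with $\mu\in\supp(g-h)$, but this is the same observation.
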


   \begin{proof}
    The case $g=h$ is trivial so assume $g<h$. Then we must have $\mu<\lambda$. Moreover, $0<h-g\leq \lambda-\mu$, hence (since $\psi$ is H-type)
    $\psi(h-g)\geq \psi(\lambda-\mu)$, hence by \ref{keytosummability} $\psi(g-h)>\min(\lambda,\mu)=\mu$. Now if $\mu$ were in $\supp(g)$ but not in 
    $\supp(h)$ then $\mu$ would also be in $\supp(g-h)$, hence $\psi(g-h)\leq\mu$, which is a contradiction. This proves the Lemma.
   \end{proof}

    \begin{Lem}\label{sequencesLem}
     If $(g_n+\lambda_n)_n$ is a decreasing sequence in $A$, then $(g_n)_n$ cannot be strictly 
     increasing. If $(g_n)_n$ is constant, then $ (g_n+\lambda_n)_n$ is not strictly decreasing.
    \end{Lem}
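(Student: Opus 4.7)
The plan is to derive both statements from the well-orderedness of supports in the Hahn product, using Lemma \ref{Fundlemsummability} as the key tool for the first part.

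For the first statement, I would argue by contradiction: suppose $(g_n+\lambda_n)_n$ is decreasing in $A$ but $(g_n)_n$ is strictly increasing. Since $g_n+\lambda_n\geq g_{n+1}+\lambda_{n+1}$ forces $\lambda_n-\lambda_{n+1}\geq g_{n+1}-g_n>0$, the sequence $(\lambda_n)_n$ is strictly decreasing. Then for any $n\leq k$, I would apply Lemma \ref{Fundlemsummability} with $g:=g_n$, $h:=g_k$, $\lambda:=\lambda_n$, $\mu:=\lambda_k$: the hypotheses $g\leq h$ and $h+\mu\leq g+\lambda$ are satisfied, so the lemma yields $\lambda_k\in\supp(g_n)\Leftrightarrow\lambda_k\in\supp(g_k)$. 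Since $g_k+\lambda_k\in A$ implies $\lambda_k\in\supp(g_k)$, we conclude $\lambda_k\in\supp(g_n)$ for every $k\geq n$. Thus the strictly decreasing sequence $(\lambda_k)_{k\geq n}$ sits inside $\supp(g_n)$, contradicting the well-orderedness of $\supp(g_n)$ (recall $g_n\in G\simeq\Hahn_{\lambda\in\Psi}B_\lambda$, so its support is well-ordered by definition).

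For the second statement, if $g_n=g$ is constant and $(g+\lambda_n)_n$ is strictly decreasing, then $(\lambda_n)_n$ itself is strictly decreasing. But $\lambda_n\in\supp(g)$ for each $n$, and $\supp(g)$ is well-ordered, a contradiction. This part is immediate and does not require Lemma \ref{Fundlemsummability}.

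I do not anticipate a serious obstacle here: the only nontrivial ingredient is Lemma \ref{Fundlemsummability} (itself a consequence of H-type-ness and Lemma \ref{keytosummability}), which precisely allows us to ``transport'' a given $\lambda_k$ from $\supp(g_k)$ into $\supp(g_n)$ for earlier indices $n$. The rest is routine manipulation of the inequality $g_n+\lambda_n\geq g_{n+1}+\lambda_{n+1}$ and an appeal to well-orderedness. The slightly delicate point is to iterate the lemma correctly, i.e.\ to apply it directly with the pair $(g_n,g_k)$ rather than telescoping through intermediate indices, so that the hypothesis $g_k+\lambda_k\leq g_n+\lambda_n$ is immediately available from the assumed monotonicity of $(g_n+\lambda_n)_n$.
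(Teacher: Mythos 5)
Your proof is correct and follows essentially the same route as the paper: both arguments use Lemma \ref{Fundlemsummability} to place the (strictly decreasing) sequence $(\lambda_k)_k$ inside the well-ordered support of a single fixed $g_n$ (the paper takes $n=0$), and both dispatch the constant case directly from well-orderedness of $\supp(g)$. No gaps.
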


    \begin{proof}    
     Assume that $(g_n+\lambda_n)_n$ is  decreasing and $(g_n)_n$ increasing. Then for all $n\in\N$ we have 
     $g_0\leq g_n$ and $g_n+\lambda_n\leq g_0+\lambda_0$, so by  Lemma \ref{Fundlemsummability} 
     $\lambda_n\in \supp(g_0)$. Since $\supp(g_0)$ is well-ordered, it follows that $(\lambda_n)_n$ cannot be strictly decreasing. Since $(g_n+\lambda_n)_n$ is  decreasing, it follows that 
$(g_n)_n$ cannot be strictly increasing. Moreover, if $(g_n)_n$ is constant, then $(g_n+\lambda_n)_n$ cannot be strictly decreasing.       
    \end{proof}

    \begin{Prop}
     The family $(a_gD(t^g))_{g\in \supp(a)}$ is  summable.
    \end{Prop}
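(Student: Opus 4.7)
The plan is to verify the two conditions of summability: (i) that $A=\bigcup_{g\in\supp(a)}\supp(D(t^g))$ is well-ordered, and (ii) that for every $g+\lambda\in A$ the fiber $A_{g,\lambda}$ is finite. Both will be proved by contradiction: assume a witness to the failure, use the well-orderedness of $\supp(a)$ (and of the supports of its elements) to pass to a monotonic subsequence, and invoke Lemma \ref{Fundlemsummability} or Lemma \ref{sequencesLem}.

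For (i), suppose that $(g_n+\lambda_n)_{n\in\N}$ is a strictly decreasing sequence in $A$ with $g_n\in\supp(a)$ and $\lambda_n\in\supp(g_n)$. Since $\supp(a)$ is well-ordered, after passing to a subsequence we may assume that $(g_n)$ is weakly increasing. If $(g_n)$ is eventually constant, then on that tail $(g_n+\lambda_n)$ remains strictly decreasing, contradicting the second clause of Lemma \ref{sequencesLem}. Otherwise $(g_n)$ admits a strictly increasing subsequence, contradicting the first clause of Lemma \ref{sequencesLem}. Hence $A$ contains no strictly decreasing infinite sequence and is well-ordered.

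For (ii), fix $g\in\supp(a)$ and $\lambda\in\supp(g)$, and suppose toward contradiction that $A_{g,\lambda}$ is infinite. Choose pairwise distinct $(h_n,\mu_n)\in A_{g,\lambda}$; by definition $h_n+\mu_n=g+\lambda$ for all $n$, with $h_n\in\supp(a)$ and $\mu_n\in\supp(h_n)$. Using well-orderedness of $\supp(a)$, pass to a subsequence on which $(h_n)$ is weakly increasing; were it eventually constant then $\mu_n=(g+\lambda)-h_n$ would also be eventually constant and the pairs would eventually coincide, a contradiction. So we may assume $(h_n)$ is strictly increasing, hence $(\mu_n)$ is strictly decreasing. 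Applying Lemma \ref{Fundlemsummability} with $h_1\leq h_n$ and $h_n+\mu_n=h_1+\mu_1$ yields $\mu_n\in\supp(h_1)$ for every $n$, which contradicts the well-orderedness of $\supp(h_1)$.

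The main obstacle is the finiteness step (ii): one must realize that Lemma \ref{Fundlemsummability} is the right tool to trap all the $\mu_n$'s into the single well-ordered set $\supp(h_1)$. Once this observation is made, both parts follow the same template (extract a monotonic subsequence, then derive a contradiction from one of the two preparatory lemmas), so the write-up should be short.
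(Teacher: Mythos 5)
Your proof is correct and follows essentially the same route as the paper: extract a monotone subsequence using the well-orderedness of $\supp(a)$, then derive a contradiction from the two preparatory lemmas. The only cosmetic difference is in the finiteness step, where you invoke Lemma \ref{Fundlemsummability} directly to trap the $\mu_n$ in $\supp(h_1)$, whereas the paper cites Lemma \ref{sequencesLem} (whose proof is exactly that argument) to rule out a strictly increasing $(h_n)$.
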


   \begin{proof}
   Assume there exists a strictly decreasing sequence $(g_n+\lambda_n)_n$ in $A$. Without loss of generality we can 
   assume that $(g_n)_n$ is either constant, strictly decreasing or strictly increasing. Since $g_n\in\supp(a)$ for all $n$, $(g_n)_n$ cannot be strictly decreasing, 
  so without loss of generality $(g_n)_n$ is either constant or strictly increasing. This contradicts Lemma \ref{sequencesLem}, and it follows that $A$ is well-ordered.
 Now let 
   $h+\mu\in A$ and assume there is an infinite subset $\{(g_n,\lambda_n)\mid n\in\N\}$ of $A_{h,\mu}$, with $(g_n,\lambda_n)\neq(g_m,\lambda_m)$ for all $n\neq m$. Without loss of generality we can 
   assume that $(g_n)_{n\in\N}$ is either constant, strictly decreasing or strictly increasing. Since the sequence $(g_n+\lambda_n)_{n\in\N}$ is constant, it is in particular decreasing, 
  so $(g_n)_n$ cannot be strictly increasing by Lemma \ref{sequencesLem}; since $\supp(a)$ is well-ordered, $(g_n)_n$ cannot be strictly decreasing. Therefore, 
  $(g_n)_n$ is constant, but then $\lambda_n$ must also be constant, which contradicts $(g_n,\lambda_n)\neq(g_m,\lambda_m)$ for all $n\neq m$. This proves that 
  $A_{h,\mu}$ is finite.    
   \end{proof}
   
   Thus, formula (\ref{defofder}) defines a map on $K$, and it is easy to see from its definition that it is a derivation.
     It remains to see if $(K,v,D)$ is a 
      pre-differential-valued field.
      
      \begin{Prop}\label{leadingcoefofDa}
       Let $a=\sum_{g\in G}a_gt^g\in K$ with $g=v(a)\neq0$ and  $\lambda=\psi(g)$.  Then 
       $v(D(a))\geq g+\lambda$, and the coefficient of $D(a)$ at $g+\lambda$ is 
 $a_gu_{\lambda}(g_{\lambda})$. In particular, if $u_{\lambda}(g_{\lambda})\neq0$ then 
      $v(D(a))=v(a)+\psi(v(a))$.     
      \end{Prop}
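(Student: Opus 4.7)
The plan is to compute directly from the formula
\[
D(a) \;=\; \sum_{h\in\supp(a)}\sum_{\mu\in\supp(h)} a_h\, u_\mu(h_\mu)\, t^{h+\mu}
\]
(using that $u_\mu(0)=0$, so restricting $\mu$ to $\supp(h)$ loses nothing) and to identify, term by term, which pairs $(h,\mu)$ can possibly produce a monomial with exponent $\leq g+\lambda$. Thus it suffices to verify two things: (1) every exponent $h+\mu$ appearing in the above sum satisfies $h+\mu\geq g+\lambda$; (2) the equality $h+\mu=g+\lambda$ is achieved only by the pair $(h,\mu)=(g,\lambda)$. Once these are established, collecting the coefficient at $g+\lambda$ gives exactly $a_g u_\lambda(g_\lambda)$, and the ``in particular'' part follows because $a_g\neq 0$ by definition of $v(a)$.

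First I would dispose of the case $h=g$: here $\mu\in\supp(g)$ forces $\mu\geq\lambda=\min\supp(g)$, so $h+\mu=g+\mu\geq g+\lambda$ with equality iff $\mu=\lambda$. This already contributes the term $a_g u_\lambda(g_\lambda)t^{g+\lambda}$.

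The main case is $h>g$ (which is all that remains since $g=v(a)=\min\supp(a)$). The key ingredient is Lemma~\ref{Fundlemsummability}: if $g\leq h$ and $h+\mu\leq g+\lambda$, then $\mu\in\supp(g)\Leftrightarrow\mu\in\supp(h)$. Suppose for contradiction that $h+\mu\leq g+\lambda$. Since $\mu\in\supp(h)$, the lemma yields $\mu\in\supp(g)$, hence $\mu\geq\lambda$. Combined with $h>g$ this forces $h+\mu>g+\lambda$, contradicting the assumption. Therefore every pair $(h,\mu)$ with $h>g$ contributes only to strictly larger exponents, which simultaneously establishes $v(D(a))\geq g+\lambda$ and the uniqueness of $(g,\lambda)$ as the pair realising equality.

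Finally, reading off the coefficient of $t^{g+\lambda}$ from the double sum, only $(h,\mu)=(g,\lambda)$ contributes, giving precisely $a_g u_\lambda(g_\lambda)$. When this product is nonzero, $g+\lambda$ lies in $\supp(D(a))$ and is its minimum, so $v(D(a))=g+\lambda=v(a)+\psi(v(a))$. The only delicate step is the case $h>g$, but once Lemma~\ref{Fundlemsummability} is invoked the proof is an essentially one-line contradiction; I do not anticipate any hidden difficulty beyond carefully tracking the definitions of $\supp(g)$ and $\lambda=\min\supp(g)=\psi(g)$.
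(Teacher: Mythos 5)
Your proof is correct and follows essentially the same route as the paper's: both arguments reduce everything to Lemma \ref{Fundlemsummability}, deduce $\mu\in\supp(g)$ hence $\mu\geq\lambda$ for any competing pair $(h,\mu)$, and conclude that $(g,\lambda)$ is the unique pair realizing the minimal exponent $g+\lambda$. The only difference is cosmetic (you split the cases $h=g$ and $h>g$, while the paper treats the inequality and the equality case in sequence), so there is nothing to add.
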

  \begin{proof}
   Let $h+\mu\in A=\{f+\nu\mid f\in\supp(a), \nu\in\supp(f)\}$ with $h+\mu\leq g+\lambda$. Since $g=v(a)$ and $h\in\supp(a)$ we have 
   $g\leq h$. By \ref{Fundlemsummability}, we have $\mu\in\supp(g)$, and since 
$\lambda=\psi(g)$ it follows that $\lambda\leq\mu$, hence $g+\lambda\leq h+\mu$. This proves that $g+\lambda\leq\min A$. 
Now note that if $g<h$ then $h+\mu\leq g+\lambda$ would imply $\mu<\lambda$ which would contradict $\mu\in\supp(g)$, so 
$h+\mu= g+\lambda$ implies $h=g$ and thus $\mu=\lambda$. It follows that $A_{g,\lambda}=\{(g,\lambda)\}$, and so by formula (\ref{defofder}) the coefficient in front of $t^{g+\lambda}$ is just
$a_gu_{\lambda}(g_{\lambda})$.
  \end{proof}

      If $u_{\lambda}(g_{\lambda})=0$, then $v(D(a))$ is not equal to $v(a)+\psi(v(a))$. It follows that in general,  $K$ endowed with 
      the derivation $D$ is not even a  pre-differential valued field since there may be constants whose valuation is not trivial. However, 
$(K,v,D)$ becomes a differential-valued field if we impose a condition on $u_{\lambda}$:
   \begin{Prop}\label{ugammainjProp}
    If $u_{\lambda}$ is injective for every $\lambda\in\Psi$,
    then $(K,v,D)$ is a differential-valued 
   field with asymptotic couple $(G,\psi)$.
   \end{Prop}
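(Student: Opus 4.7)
The plan is to reduce everything to Proposition \ref{leadingcoefofDa} once we use injectivity. First, I would observe that the injectivity of each $u_\lambda$ is exactly what is needed to promote Proposition \ref{leadingcoefofDa} to the clean equality $v(D(a))=v(a)+\psi(v(a))$ for every $a\in K$ with $v(a)\neq 0$. Indeed, if $g=v(a)\neq 0$ and $\lambda:=\psi(g)=\min\supp(g)$, then $\lambda\in\supp(g)$ forces $g_\lambda\neq 0$, and then $u_\lambda(g_\lambda)\neq 0$ by injectivity, so the leading-coefficient calculation in Proposition \ref{leadingcoefofDa} yields the desired valuation equality.

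Next, I would handle the constant term. Since $u_\lambda$ is a group homomorphism, $u_\lambda(0)=0$, hence $D(t^0)=0$, and by strong linearity $D(c)=0$ for every $c\in k\subseteq K$. Combined with the previous paragraph, this shows that $k$ is contained in the constant field $\mathcal{C}$ of $(K,D)$. The standard decomposition $a=a_0+a'$ with $a_0\in k$ and $v(a')>0$ then gives $\ringv=\mathcal{C}+\idealv$, which is axiom (DV1).

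For (DV2), take $a\in\ringv$ and $0\neq b\in\idealv$. Writing $a=a_0+a'$ with $a_0\in k$ and $v(a')>0$ (or $a'=0$), we have $D(a)=D(a')$. If $a'=0$, then $v(D(a))=\infty$ and we are done. Otherwise $v(a')>0$, $v(b)>0$, and the first paragraph gives $v(D(a'))=v(a')+\psi(v(a'))$ and $v(\phi(b))=v(D(b))-v(b)=\psi(v(b))$. So (DV2) reduces to $v(a')+\psi(v(a'))>\psi(v(b))$, which is precisely axiom (AC3) applied to $h:=v(a')>0$ and $g:=v(b)$. This completes the proof that $(K,v,D)$ is a differential-valued field.

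Finally, the identity $v(\phi(b))=\psi(v(b))$ for $b\notin\unitsv\cup\{0\}$, already established above, shows that the logarithmic derivative $\phi$ is consistent with $v$ and that the induced map $\phi_G$ on $G$ coincides with $\psi$; hence the asymptotic couple of $(K,v,D)$ is exactly $(G,\psi)$. I do not expect a serious obstacle here: injectivity of the $u_\lambda$ is tailor-made to guarantee that no cancellation destroys the leading term of $D(a)$, and (AC3) is exactly the axiom that was built in to make (DV2) automatic.
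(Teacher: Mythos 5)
Your proposal is correct and follows essentially the same route as the paper: injectivity of the $u_{\lambda}$ upgrades Proposition \ref{leadingcoefofDa} to the identity $v(D(a))=v(a)+\psi(v(a))$ for $v(a)\neq0$, which gives (DV1) via the decomposition $a=a_0+a'$, gives (DV2) via (AC3), and identifies the induced map on $G$ with $\psi$. The only point left implicit is the reverse inclusion $\constants\subseteq\ringv$ needed for (DV1), but this is immediate from the same identity (any nonzero constant must have valuation $0$, so in fact $\constants=k\cdot t^0$ as the paper notes).
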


   \begin{proof}
    By Proposition \ref{leadingcoefofDa}, we have $v(D(a))=v(a)+\psi(v(a))$ for every $a\notin\unitsv$. 
    In particular, $v(a)\neq0$ implies $D(a)\neq0$. It follows that  $\constants=k.t^0$, so (DV1) is satisfied. Now let 
    $a,b\in K$ with 
    $v(a),v(b)>0$ and $b\neq0$. By Proposition \ref{leadingcoefofDa}, we have 
    $v(D(a))=v(a)+\psi(v(a))$ and $v(\frac{D(b)}{b})=\psi(v(b))$. By Axiom (AC3), it follows that 
   $v(D(a))>v(\frac{D(b)}{b})$. If $v(a)=0$, then we write $a=c+\epsilon$ with $c\in\constants$ and $\epsilon\in\idealv$ and we have 
  $D(a)=D(\epsilon)$ which brings us back to the case $v(a)>0$. This proves (DV2). The relation $v(D(a))=v(a)+\psi(v(a))$ proves that $(G,\psi)$ is 
the asymptotic couple associated to $(K,v,D)$.
   \end{proof}

    In the case where $k$ is ordered, we can even make $(K,v,D)$ a H-field under some additional assumption. Remember that, because 
    $(G_{\psi})_{\lambda}$ is convex in $(G,\leq)$, the order $\leq$ induces an order on $B_{\lambda}$ defined by 
    $g+(G_{\psi})_{\lambda}\leq h+(G_{\psi})_{\lambda}$ if and only if \newline$(g-h\in(G_{\psi})_{\lambda})\vee (g-h\notin(G_{\psi})_{\lambda}\wedge g\leq h)$.
   \begin{Prop}\label{ugammaopProp}
    Assume that $k$ is an ordered field and that $u_{\lambda}$ is an order-reversing embedding of groups for each $\lambda\in\Psi$.    
    Then $(K,\leq,D)$ is a H-field with asymptotic couple $(G,\psi)$.
   \end{Prop}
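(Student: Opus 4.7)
The plan is to verify the three remaining conditions (PH2), (PH3), (H2) one by one, with (PH3) being the only one that really requires work. First, since an order-reversing embedding is in particular injective, Proposition \ref{ugammainjProp} already gives us that $(K,v,D)$ is a differential-valued field with asymptotic couple $(G,\psi)$; this establishes (PH1) and, because (DV1) holds, also gives $\ringv=\constants+\idealv$, which is (H2). Axiom (PH2), that $\ringv$ is $\leq$-convex, is immediate from the way the order on $K=k((G))$ is defined from the order on $k$: $\ringv$ coincides with the natural valuation ring of $(K,\leq)$, so convexity is automatic.

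The heart of the proof is (PH3): if $a\in K$ with $a>\ringv$, then $D(a)>0$. Such an $a$ satisfies $a>0$ and $g:=v(a)<0$, so writing $a=\sum_{h\in G}a_ht^h$ we have $a_g>0$. Set $\lambda:=\psi(g)$; since we view $(G,\psi)$ as the maximally valued Hahn product $\Hahn_{\mu\in\Psi}B_{\mu}$, we have $\lambda=\min\supp(g)$, so $g_{\lambda}\neq 0$. The idea is now to read off the leading term of $D(a)$ via Proposition \ref{leadingcoefofDa} and check that its coefficient is positive.

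The sign chase goes as follows. Because $g<0$ in the Hahn product $\Hahn_{\mu\in\Psi}B_{\mu}$ and $\lambda=\min\supp(g)$, the induced order on $B_{\lambda}$ forces $g_{\lambda}<0$. Since $u_{\lambda}\colon B_{\lambda}\to k$ is an order-reversing group embedding, we have $u_{\lambda}(g_{\lambda})>u_{\lambda}(0)=0$, and in particular $u_{\lambda}(g_{\lambda})\neq 0$. Applying Proposition \ref{leadingcoefofDa} (which is available because $u_{\lambda}$ is injective and $g_{\lambda}\neq 0$), we conclude that $v(D(a))=g+\lambda$ and that the leading coefficient of $D(a)$ is exactly $a_g\,u_{\lambda}(g_{\lambda})$, a product of two positive elements of $k$. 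Hence $D(a)>0$, which is (PH3).

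I do not expect any serious obstacle: every step is a straightforward bookkeeping of signs through the Hahn-product identification and the explicit formula for $D$. The only subtle point is justifying that $g_{\lambda}<0$ in $B_{\lambda}$ whenever $g<0$ and $\lambda=\min\supp(g)$, which is a direct consequence of how the lexicographic order on the Hahn product induces orders on the quotients $\comppsi$.
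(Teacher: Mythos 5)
Your proof is correct and follows essentially the same route as the paper: reduce to checking (PH3) after invoking the differential-valued field structure from Proposition \ref{ugammainjProp}, then compute the sign of the leading coefficient $a_g u_{\lambda}(g_{\lambda})$ of $D(a)$ via Proposition \ref{leadingcoefofDa}, using that $g_{\lambda}<0$ and $u_{\lambda}$ is order-reversing. The only difference is that you spell out (PH2) and (H2) explicitly where the paper leaves them implicit, which is harmless.
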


   \begin{proof}    
    We already know that $(K,v,D)$ is a differential-valued field and that \newline
$\ringv=\{a\in K\mid\exists c\in\constants, \vert a\vert\leq c\}$.
   Now assume that $a\in K$ is such that $a>\constants$, $a=\sum_{g\in G}a_gt^g$. It follows that, for $g:=v(a)$, $a_g>0$ and  $g<0$. 
   This implies that $g_{\lambda}<0$, where $\lambda=\psi(g)$. By Proposition \ref{leadingcoefofDa}, the sign of $D(a)$ is the sign of 
  $a_gu_{\lambda}(g_{\lambda})$. Since $u_{\lambda}$ is order-reversing, it follows that $a_gu_{\lambda}(g_{\lambda})>0$, hence $D(a)>0$. This proves that 
$(K,\leq,D)$ is a H-field.
   \end{proof}

    This solves our problem for the case where $(G,\psi)$ is maximally valued. Now we want to extend those results to the general case. 
    Assume then that $(G,\psi)$ is an arbitrary H-type asymptotic couple. Let $\divhull{G}$ denote the divisible hull of $G$, i.e. the group 
    $G\bigotimes_{\Z}\Q$. Up to isomorphism, $\divhull{G}$ is the smallest divisible abelian group containing $G$. 
    It is easy to see that both $\leq$ and $\psi$ extend uniquely to $\divhull{G}$, so that 
    $(\divhull{G},\psi)$ is also a H-type asymptotic couple.    
    Viewing $(\divhull{G},\psi)$ as a valued group, we can use Hahn's embedding theorem: 
   there exists an embedding of valued groups $\iota:(\divhull{G},\psi)\hookrightarrow (H:=\Hahn_{\lambda\in\Psi}\divhull{B}_{\lambda},\psi)$, 
   where $\divhull{B}_{\lambda}$ denotes the divisible hull of $B_{\lambda}$, and $\psi$ is defined on $H$ by 
   $\psi(h)=\iota(\min\supp(h))$.
    It remains to show that $H$ can be endowed with an order 
extending the one on $\divhull{G}$, so that $\iota$ is actually an embedding of asymptotic couples. For this, we use results from \cite{LehericyKuhlmann}. Since  
  $\psi$ is H-type, it is a coarsening of $v_G$, so it follows from \cite[Theorem  3.2]{LehericyKuhlmann} that
$\leq$ induces an order on each $\divhull{B}_{\lambda}$. By \cite[Theorem 4.6]{LehericyKuhlmann}, we can lift this family of orders $(\leq_{\lambda})_{\lambda\in\Psi}$ to 
$H$. This gives us a group order on $H$ whose restriction to $\iota(\divhull{G})$ is exactly the order of $\divhull{G}$. Now if we restrict 
$\iota$ to $G$, we have an embedding of asymptotic couples $\iota: (G,\psi)\to (H,\psi)$.
With this embedding $\iota$ of asymptotic couples, we can see 
$K$ as a subfield of $k((H))$: consider the embedding $\rho:\sum_{g\in G}a_gt^g\mapsto\sum_{g\in G}a_gt^{\iota(g)}$. Note that if $k$ is ordered   then this embedding 
preserves the order 
of $K$. Now we know from what we have done before that $k((H))$ can be endowed with a derivation $D$ given by the formula (\ref{defofder}). It is easy to see from this 
formula that $\rho(K)$ is stable under $D$, so $D$ is also a derivation on $\rho(K)$ which gives us a derivation on $K$. With Proposition \ref{leadingcoefofDa} we could 
show again that 
if each $u_{\lambda}$ is injective, then $(K,v,D)$ is a differential-valued field, and if $k$ is ordered and each $u_{\lambda}$ is an order-reversing embedding, then 
$(K,\leq,D)$ is a H-field.

    Thus, the method described above allows us to define a derivation on any field of generalized power series $k((G))$ 
    where $(G,\psi)$ is a given H-type asymptotic couple. However, if we want to have a differential-valued field, we saw that our method only works 
    if each $B_{\lambda}$ ($\lambda\in\Psi$) is embeddable into $(k,+)$ as a group. One can then wonder if we could find a method 
    which does not need this condition; the next proposition proves that it is not possible:

    \begin{Prop}\label{embeddingnecessaryProp}
     Let $(G,\psi)$ be a H-type asymptotic couple and $k$ a field. 
     Let $D$ be a derivation on $K:=k((G))$ such that $(K,v,D)$ is a differential-valued field with asymptotic couple
     $(G,\psi)$.  Then for each 
$\lambda\in\Psi$, there exists a group embedding $u_{\lambda}$ from $B_{\lambda}$ into $(k,+)$. If moreover $k$ is ordered and 
$(K,\leq,D)$ is a H-field, then we can even choose $u_{\lambda}$ so that it is order-reversing.     
    \end{Prop}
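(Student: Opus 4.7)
The plan is to extract the homomorphisms $u_\lambda$ directly from $D$ by reading off, for each monomial $t^g$ with $g\in(G_\psi)^\lambda$, the coefficient of the leading term (position $\lambda$) of the logarithmic derivative $\phi(t^g):=D(t^g)/t^g$. The key observation is that $\phi$ sends multiplication to addition, so the map $g\mapsto \phi(t^g)$ is a group homomorphism from $G$ to $(K,+)$, and composing with ``extract the coefficient of $t^\lambda$'' gives a group homomorphism to $k$.

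More precisely, I would first fix $\lambda\in\Psi$ and define $c_\lambda:(G_\psi)^\lambda\to k$ by letting $c_\lambda(g)$ be the coefficient of $t^\lambda$ in the expansion of $\phi(t^g)$ (with $c_\lambda(0):=0$). Since $\phi(t^{g+h})=\phi(t^g)+\phi(t^h)$ by the Leibniz rule, $c_\lambda$ is a group homomorphism. Next I would show $(G_\psi)_\lambda\subseteq\ker c_\lambda$: for $g\in(G_\psi)_\lambda\setminus\{0\}$ one has $\psi(g)>\lambda$, and since $(G,\psi)$ is the asymptotic couple of $(K,v,D)$ we have $v(\phi(t^g))=\psi(g)>\lambda$, so the coefficient at $t^\lambda$ vanishes. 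Therefore $c_\lambda$ descends to a homomorphism $u_\lambda:B_\lambda\to k$. Injectivity is the same computation in reverse: if $[g]\neq 0$ in $B_\lambda$ then $\psi(g)=\lambda$, so $v(\phi(t^g))=\lambda$ and hence $c_\lambda(g)\neq 0$.

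For the ordered case, suppose $k$ is ordered and $(K,\leq,D)$ is a H-field. I would pick a representative $g\in(G_\psi)^\lambda\setminus(G_\psi)_\lambda$ of a class $[g]\in B_\lambda$ with $g<0$; by the definition of the induced order on $B_\lambda$ recalled just before Proposition \ref{ugammaopProp}, $g<0$ and $g\notin(G_\psi)_\lambda$ force $[g]<0$. Since $g<0$ we have $t^g>\ringv$, so axiom (PH3) gives $D(t^g)>0$; because $t^g>0$ this means $\phi(t^g)>0$, and since $v(\phi(t^g))=\lambda$ the sign of $\phi(t^g)$ agrees with the sign of its leading coefficient $c_\lambda(g)=u_\lambda([g])$. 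Hence $u_\lambda([g])>0$, which shows $u_\lambda$ is order-reversing.

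The main steps requiring care are the verification that $c_\lambda$ is a well-defined homomorphism (which rests only on the multiplicativity of $g\mapsto t^g$ combined with the Leibniz rule) and the computation $v(\phi(t^g))=\psi(g)$; both are essentially tautological given the hypothesis that $(G,\psi)$ is the asymptotic couple of $(K,v,D)$. There is no genuine obstacle here, as the construction is just the natural inverse of the one described in Section \ref{derivationfromacsection}: given any derivation realising $(G,\psi)$, its action on fundamental monomials already forces the $u_\lambda$'s to exist with exactly the required properties.
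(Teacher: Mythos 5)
Your proposal is correct and follows essentially the same route as the paper: both read off $u_{\lambda}$ from the leading coefficient of $D(t^g)$ (equivalently, the coefficient of $\phi(t^g)$ at $t^{\lambda}$), use the Leibniz rule for additivity, the identity $v(D(a))=v(a)+\psi(v(a))$ for injectivity, and (PH3) applied to a monomial of negative valuation for the order-reversing statement. The only difference is presentational: you define the map on all of $(G_{\psi})^{\lambda}$ and then pass to the quotient $B_{\lambda}$, which makes well-definedness automatic, whereas the paper defines $u_{\lambda}$ on representatives and proves independence of the representative as a separate claim via the product rule.
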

    
    \begin{proof}
      We start by showing the following claim:
      \begin{Claim*}
       Let $h,g\in G$ be such that  $\psi(h)=\psi(g)$ and $\psi(g-h)>\psi(g)$. Then  $D(t^g)$ and $D(t^h)$ have the same 
      leading coefficient.
      \end{Claim*}
      \begin{proof}
	By the product rule, we have $D(t^h)=t^{h-g}D(t^g)+t^gD(t^{h-g})$. Moreover, we have 
         $v(t^{h-g}D(t^g))=h-g+g+\psi(g)=h+\psi(g)$ and \newline
$v(t^gD(t^{h-g}))=g+h-g+\psi(h-g)=h+\psi(h-g)$. Since $\psi(g-h)>\psi(g)$, 
 we have   $v(t^{h-g}D(t^g))<v(t^gD(t^{h-g}))$. It follows that $v(D(t^h))=v(t^{h-g}D(t^g))$. Therefore, the leading coefficient of 
      $D(t^h)$ is the leading coefficient of $t^{h-g}D(t^g)$, which is equal to the leading coefficient of $D(t^g)$.
\end{proof}

        Now let $\lambda\in\Psi$. Set $u_{\lambda}(0):=0$. For any $g_{\lambda}\in B_{\lambda}$,
        take any $g\in \grouppsiplus$ such that $g+\grouppsiminus=g_{\lambda}$ and define
        $u_{\lambda}(g_{\lambda})$  as the leading coefficient  of $D(t^g)$. 
        The claim makes sure that $u_{\lambda}(g_{\lambda})$ does not depend on the choice of $g$, so this gives us a well-defined map 
	$u_{\lambda}:B_{\lambda}\to k$.
 One can easily check that this is a group 
homomorphism. The fact that $\ker u_{\lambda}=\{0\}$ follows from the fact that $D(a)\neq0$ when $v(a)\neq0$. Now assume that $k$ is ordered and that 
$(K,\leq,D)$ is a H-field. Let $\lambda\in\Psi$ and $h_{\lambda},g_{\lambda}\in B_{\lambda}$ with $h_{\lambda}<g_{\lambda}$. 
Take $g,h\in G^{\lambda}$ with $g+\grouppsiminus=g_{\lambda}$ and $h+\grouppsiminus=h_{\lambda}$. We then have 
$h<g$. It follows that $v(t^{h-g})<0$, hence $t^{h-g}>\constants$, which by (PH3) implies $D(t^{h-g})>0$, so the leading coefficient 
$u_{\lambda}(h_{\lambda}-g_{\lambda})=u_{\lambda}(h_{\lambda})-u_{\lambda}(g_{\lambda})$ of $D(t^{h-g})$ is positive, hence 
$u_{\lambda}(h_{\lambda})>u_{\lambda}(g_{\lambda})$, so $u_{\lambda}$ is order-reversing.      
\end{proof}

    We can now formulate our answer to Question \ref{question}. Note that the existence of an order-reversing group homomorphism between two groups is equivalent to the 
existence of an order-preserving homomorphism between them.
    \begin{Thm}\label{mainThm}
     Let 
      $k$ be a field (respectively, an ordered field) 
      and $(G,\psi)$ a H-type asymptotic couple. Let $(\Psi,(\comppsi)_{\lambda\in\Psi})$ be
      the skeleton of the valuation $\psi$. There exists a derivation $D$ on $k((G))$ making 
      $(k((G)),v,D)$ a differential valued field (respectively, a H-field) with asymptotic couple $(G,\psi)$
      if and only if each 
      $\comppsi$ is embeddable into $(k,+)$ (respectively, into $(k,+,\leq)$). In that case, $D$ can be defined 
      by the formula (\ref{defofder}).
    \end{Thm}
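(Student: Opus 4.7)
The plan is to observe that both directions have essentially been carried out in the preceding development, so the proof of Theorem \ref{mainThm} is mostly a matter of assembling Propositions \ref{ugammainjProp}, \ref{ugammaopProp}, and \ref{embeddingnecessaryProp} together with the reduction to the maximally valued case.

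For the sufficiency direction, I would first treat the case where $(G,\psi)$ is maximally valued, i.e.\ isomorphic as a valued group to the Hahn product $\Hahn_{\lambda\in\Psi}\comppsi$. Given embeddings $u_{\lambda}:\comppsi\hookrightarrow(k,+)$ (respectively order-reversing embeddings into $(k,+,\leq)$; note that since every $\comppsi$ is abelian, the existence of an order-preserving embedding into $(k,+,\leq)$ is equivalent to the existence of an order-reversing one, obtained by composing with $x\mapsto-x$), use them to define $D$ on $K=k((G))$ via formula (\ref{defofder}). The well-definedness of $D$ is guaranteed by the summability arguments already established. Then Proposition \ref{ugammainjProp} gives that $(K,v,D)$ is a differential-valued field with asymptotic couple $(G,\psi)$, and in the ordered case Proposition \ref{ugammaopProp} upgrades this to an H-field.

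The next step handles the case where $(G,\psi)$ is not maximally valued. For this I would follow exactly the reduction described in the paragraph after Proposition \ref{ugammaopProp}: embed $(G,\psi)$ into its divisible hull $\divhull{G}$, then apply Hahn's embedding theorem (Theorem \ref{Hahnsembeddingtheorem}) to obtain an embedding of valued groups $\iota:\divhull{G}\hookrightarrow \Hahn_{\lambda\in\Psi}\divhull{\comppsi}$, and lift the order from $\divhull{G}$ to the Hahn product using \cite[Theorems 3.2 and 4.6]{LehericyKuhlmann} so that $\iota$ becomes an embedding of asymptotic couples. Each $u_{\lambda}$ extends uniquely to an embedding $\divhull{\comppsi}\hookrightarrow(k,+)$ (respectively, order-reversing embedding into $(k,+,\leq)$; one must check here that an order-reversing embedding of an ordered abelian group into a divisible ordered abelian group extends to the divisible hull, which is a standard fact). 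The formula (\ref{defofder}) then defines a derivation on $k((\Hahn_{\lambda\in\Psi}\divhull{\comppsi}))$, under which the image of the natural embedding $K\hookrightarrow k((\Hahn_{\lambda\in\Psi}\divhull{\comppsi}))$ is stable, yielding a derivation on $K$ with the required properties.

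For the necessity direction, there is nothing to do: it is exactly the content of Proposition \ref{embeddingnecessaryProp}. The main subtlety throughout is the case analysis for orderedness and making sure that the embeddings $u_{\lambda}$ in the sufficiency direction can be chosen to match the orientation convention used in Proposition \ref{ugammaopProp}; this is the only nontrivial checking, but it reduces to the elementary observation that reversing the sign on $k$ turns an order-preserving embedding into an order-reversing one. The heaviest technical step (the summability and leading-coefficient computation underlying formula (\ref{defofder})) has already been carried out in Propositions \ref{leadingcoefofDa}--\ref{ugammaopProp}, so at this stage the proof is essentially a quotation.
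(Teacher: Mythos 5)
Your proposal is correct and follows essentially the same route as the paper: the paper's proof of Theorem \ref{mainThm} is literally the one-line citation of Propositions \ref{ugammainjProp}, \ref{ugammaopProp} and \ref{embeddingnecessaryProp}, with the reduction from the general to the maximally valued case (divisible hull, Hahn embedding, order lifting via \cite{LehericyKuhlmann}) carried out in the paragraph preceding the theorem exactly as you describe. Your explicit remarks on extending the $u_{\lambda}$ to divisible hulls and on the equivalence of order-preserving and order-reversing embeddings are details the paper also notes (the latter immediately before the theorem statement), so nothing essential differs.
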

     \begin{proof}
      It follows directly from Propositions \ref{ugammainjProp}, \ref{ugammaopProp} and \ref{embeddingnecessaryProp}
     \end{proof}

\subsection{Fields of power series admitting a H-derivation}\label{Hderivationsection}

We now want to use Theorem \ref{mainThm} to answer Question \ref{question1} of the introduction.
 This means we need to characterize the ordered groups $G$ which can be endowed with a map $\psi$ satisfying the properties of Theorem \ref{mainThm}. 
 Note that if $(G,\psi)$ is a  H-type asymptotic couple, then $\psi$ is consistent with $v_G$, which means that $\psi$ naturally 
induces two maps on $\Gamma$: \begin{enumerate}
                              \item the map $\hat{\psi}:\Gamma\to G$ defined by $\hat{\psi}(v_G(g)):=\psi(g)$.
			      \item the map $\omega:\Gamma\to\Gamma\cup\{\infty\}$ defined by $\omega(\gamma):=v_G(\hat{\psi}(\gamma))$.
                             \end{enumerate}
The main idea to answer Question \ref{question1} is to characterize the maps on $\Gamma$ which can be lifted to a map $\psi$ satisfying the conditions of Theorem \ref{mainThm}. 
This is connected to the notion of shift. 
A  map on $\sigma: \Gamma\to\Gamma\cup\{\infty\}$ is called a
 \textbf{right-shift} if $\sigma(\gamma)>\gamma$ for all $\gamma\in\Gamma$.
The authors of \cite{KuhlmannMatusinski} already found a connection between shifts on $\Gamma$ and the existence of  
Hardy-type derivations on $K$. In particular, it was showed in \cite{KuhlmannMatusinski} that a shift on $\Gamma$ can be lifted to a derivation on $\R(G)$, where $G=\Hahn_{\gamma\in\Gamma}\R$. 
We show here that any H-derivation comes  from a shift on $\Gamma$ (see Theorems \ref{answertoquestion1} and \ref{HardytypeThm}).

  We extend the notion of shift to maps from $\Gamma$ to $G^{\leq0}$: we say that a map  $\sigma:\Gamma\to G^{\leq0}$ is a \textbf{right-shift} if the map $v_G\circ\sigma:\Gamma\to\Gamma\cup\{\infty\}$ is a right-shift.
The following two lemmas show the connection between asymptotic couples and shifts. The increasing right-shifts of $\Gamma$ are exactly the maps induced by 
asymptotic couples of cut point $0$:

\begin{Lem}\label{sigmacomesfrompsiLem}
     Let $\sigma:\Gamma\to G^{\leq0}$ (respectively, $\omega:\Gamma\to\Gamma\cup\{\infty\}$) be a map. Then the following statements are equivalent: 
      \begin{enumerate}[(i)]
       \item There exists a map $\psi:G^{\neq0}\to G$ such that $(G,\psi)$ is a H-type asymptotic couple with cut point $0$ and 
        $\sigma(v_G(g))=\psi(g)$ (respectively, $\omega(v_G(g))=v_G(\psi(g))$) for all $g\in G^{\neq0}$.
	\item $\sigma$ (respectively, $\omega$) is an increasing right-shift.
      \end{enumerate}     
    \end{Lem}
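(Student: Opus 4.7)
The plan is to handle the $\sigma$-version first and then deduce the $\omega$-version from it. For (i)$\Rightarrow$(ii) in the $\sigma$ case, observe that the map $\sigma$ in the statement is precisely the map $\hat{\psi}$ introduced before the lemma. The right-shift condition $v_G(\sigma(\gamma))>\gamma$ translates to $v_G(\psi(g))>v_G(g)$ for every $g\neq 0$, which is exactly the cut-point-$0$ condition. That $\sigma$ takes values in $G^{\leq 0}$ follows from Lemma \ref{gapiscutpointLem}(ii), which gives $\sup\Psi=0$. To see $\sigma$ is increasing, let $\gamma\leq\delta$ in $\Gamma$ and pick $g,h<0$ with $v_G(g)=\gamma$, $v_G(h)=\delta$; then $g\leq h<0$, so (ACH) gives $\psi(g)\leq\psi(h)$, i.e., $\sigma(\gamma)\leq\sigma(\delta)$.

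For (ii)$\Rightarrow$(i) in the $\sigma$ case, I would simply set $\psi(g):=\sigma(v_G(g))$ for $g\in G^{\neq 0}$ and verify the asymptotic couple axioms. Axiom (AC2) is immediate from $v_G(ng)=v_G(g)$. For (AC1), the inequality $v_G(g+h)\geq\min(v_G(g),v_G(h))$ combined with $\sigma$ being increasing yields $\psi(g+h)\geq\min(\psi(g),\psi(h))$. Condition (ACH) follows from $v_G$ being order-preserving on $G^{<0}$: if $g\leq h<0$ then $v_G(g)\leq v_G(h)$, whence $\psi(g)\leq\psi(h)$. The cut-point-$0$ property is exactly the right-shift property of $\sigma$. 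The only non-trivial verification is (AC3): since $\sigma(\Gamma)\subseteq G^{\leq 0}$ we have $\psi(g)\leq 0$, and the right-shift condition $v_G(\psi(h))>v_G(h)$ forces $|\psi(h)|$ to be archimedean strictly smaller than $|h|$ whenever $\psi(h)\neq 0$, so $\psi(h)+|h|>0\geq\psi(g)$; the case $\psi(h)=0$ gives $\psi(h)+|h|=|h|>0\geq\psi(g)$ directly.

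For the $\omega$-version, the direction (i)$\Rightarrow$(ii) follows from the $\sigma$-case by setting $\omega:=v_G\circ\sigma$: the composition of an increasing right-shift into $G^{\leq 0}$ with the order-preserving $v_G|_{G^{\leq 0}}$ is an increasing right-shift into $\Gamma\cup\{\infty\}$. For (ii)$\Rightarrow$(i), I would lift $\omega$ to a map $\sigma:\Gamma\to G^{\leq 0}$ by choosing, for each $\gamma\in\Gamma$ with $\omega(\gamma)\neq\infty$, a single representative $\sigma(\gamma)\in G^{<0}$ with $v_G(\sigma(\gamma))=\omega(\gamma)$ (taking one representative per $\omega$-fiber so that equal values of $\omega$ lift to equal values of $\sigma$), and setting $\sigma(\gamma):=0$ when $\omega(\gamma)=\infty$. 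Since $v_G$ preserves the order on $G^{\leq 0}$, $\omega$ increasing translates to $\sigma$ increasing, and the right-shift property is manifestly preserved; the $\sigma$-version then produces the desired $\psi$.

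The principal obstacle is tracking the sign-and-valuation bookkeeping, especially in the verification of (AC3), where one must combine the sign constraint $\sigma(\Gamma)\subseteq G^{\leq 0}$ with the strict archimedean inequality coming from the right-shift property. The $\omega$-to-$\sigma$ lift requires only a choice of representatives and involves no real difficulty beyond ensuring consistency across $\omega$-fibers.
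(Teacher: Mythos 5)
Your proposal is correct and takes essentially the same route as the paper: define $\psi:=\sigma\circ v_G$, check (AC1), (AC2), (ACH) and the sign/valuation argument for (AC3) exactly as you do, and reduce the $\omega$-version to the $\sigma$-version by choosing one representative in $G^{<0}$ per value of $\omega$ (with $0$ for the value $\infty$). The only cosmetic difference is that you note directly that the cut-point-$0$ condition is literally the right-shift condition, whereas the paper derives $c=0$ by ruling out $c\neq0$ via Lemma \ref{lemmacutpoint}(ii).
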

    \begin{proof}
     Assume that (i) holds. Then by (ACH), $\sigma$ must be increasing and since $0$ is a cut point then $\sigma$ must be a 
     right-shift. Moreover, by Lemma \ref{gapiscutpointLem}(ii)
      we have $\sigma(\Gamma)\subseteq G^{\leq0}$.
    It then follows that $\omega=v_G\circ\sigma$ is an increasing right-shift. Conversely, assume $\sigma$ is an increasing right-shift. Define 
    $\psi(g):=\sigma(v_G(g))$ for any $g\in G^{\neq0}$.
 (AC2) is obviously satisfied. Let $g,h\in G^{\neq0}$. We have $v_G(g-h)\geq\min(v_G(g),v_G(h))$, and since $\sigma$ is increasing it follows that 
$\sigma(v_G(g-h))\geq\min(\sigma(v_G(g)),\sigma(v_G(h)))$ which proves (AC1). Now assume $0<h$. Since $\sigma$ is a right-shift, we have 
$v_G(\psi(h))>v_G(h)$, so $h+\psi(h)$ must be positive. Since $\sigma(\Gamma)\subseteq G^{\leq0}$, $\psi(g)$ is not positive, so (AC3) holds. 
Finally, (ACH) follows directly from the fact that $\sigma$ is increasing. Now let $c$ be a cut point for 
$\psi$. If $c\neq0$, then Lemma \ref{lemmacutpoint}(ii) implies $v_G(c)=v_G(\psi(c))$, hence 
$v_G(\sigma(v_G(c)))=v_G(c)$, which contradicts the fact that 
$\sigma$ is a right-shift. Therefore, we must have $c=0$, and 
 $\psi$ satisfies the desired conditions.
  If we are only given an increasing right-shift $\omega:\Gamma\to\Gamma\cup\{\infty\}$, we can choose a 
  $g_{\delta}\in G^{<0}$ with $v_G(g_{\delta})=\delta$ for each $\delta\in \omega(\Gamma)$. We can then set 
  $\sigma(\gamma):=g_{\omega(\gamma)}$ for every $\gamma\in\Gamma$. This gives us an increasing right-shift $\sigma:\Gamma\to G^{\leq0}$ with 
  $\omega=v_G\circ\sigma$. We already proves that there exists $\psi$ with $0$ as a cut point and such that $\sigma(v_G(g))=\psi(g)$. This equality then 
  implies $\omega(v_G(g))=v_G(\psi(g))$.
    \end{proof}

  Moreover, there is a natural way of associating a shift to any H-type asymptotic couple:
    \begin{Lem}\label{shiftexistenceLem}
     Let $(G,\psi)$ be a H-type asymptotic couple. There exists an increasing right-shift \newline
      $\sigma:\Gamma\to G^{\leq0}$ such that for any $g,h\in G^{\neq0}$, 
    $\sigma(v_G(g))\leq\sigma(v_G(h))\Leftrightarrow \psi(g)\leq\psi(h)$.
    \end{Lem}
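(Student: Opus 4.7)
The plan is to normalise $\psi$ via Lemma \ref{translatetonegativeLem} and then build $\sigma$ as a translate of the $\Gamma$-factored map, using the Fundamental Lemma (Lemma \ref{Fundlemofascouples}) to force the right-shift property.

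First I would invoke Lemma \ref{translatetonegativeLem} to find $x\in G$ with $\hat\psi:=\psi+x$ satisfying $\hat\psi(g)<0$ for every $g\in G^{\neq 0}$. Since adding the constant $x$ preserves the order, $\hat\psi(g)\leq\hat\psi(h)\Leftrightarrow\psi(g)\leq\psi(h)$, so it is equivalent to produce a $\sigma$ satisfying the biconditional for $\hat\psi$. I would therefore assume from now on that $\psi(g)<0$ for every $g\in G^{\neq 0}$. The H-type axiom (ACH) forces $\psi$ to be constant on archimedean classes of $G$, so it factors through $\Gamma$ as $\psi(g)=\bar\psi(v_G(g))$ with $\bar\psi:\Gamma\to G^{<0}$ weakly increasing, and the required biconditional reduces to $\sigma(\gamma)\leq\sigma(\delta)\Leftrightarrow\bar\psi(\gamma)\leq\bar\psi(\delta)$.

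Lemma \ref{Fundlemofascouples} supplies the essential input: for $\gamma<\delta$ in $\Gamma$, picking representatives $g_\gamma,g_\delta\in G^{<0}$ with $v_G(g_\gamma)=\gamma$ and $v_G(g_\delta)=\delta$, one has $v_G(g_\gamma-g_\delta)=\gamma$ (as $g_\gamma$ dominates) and hence $v_G(\bar\psi(\gamma)-\bar\psi(\delta))>\gamma$. I would exploit this by setting $\sigma(\gamma):=\bar\psi(\gamma)-M$ for a uniform upper bound $M\in G$ of $\bar\psi(\Gamma)$. When $\bar\psi(\Gamma)$ attains a maximum $M=\bar\psi(\gamma^*)$ — for instance when $\Gamma$ has a maximum, or in the ``gap'' and ``$\Psi$ has a maximum'' branches of the trichotomy of Proposition \ref{trichotomy} — this single formula does everything at once: $\sigma(\gamma)\leq 0$ follows from $\bar\psi(\gamma)\leq M$; the biconditional and weak monotonicity are immediate because $\sigma$ is just $\bar\psi$ shifted by a constant; and the right-shift condition $v_G(\sigma(\gamma))>\gamma$ comes from the Fundamental Lemma for $\gamma<\gamma^*$, while monotonicity forces $\sigma(\gamma)=0$ for $\gamma\geq\gamma^*$.

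The hard part will be the remaining asymptotic integration case, in which by Proposition \ref{trichotomy} the set $\Psi=\bar\psi(\Gamma)$ has neither a maximum nor a gap, so no globally valid $M\in G$ is available. In that situation I would instead set $\sigma(\gamma):=\bar\psi(\gamma)-\bar\psi(\delta_\gamma)$ for a chosen family $\delta_\gamma>\gamma$: the Fundamental Lemma again yields the right-shift condition, and $\sigma(\gamma)\leq 0$ follows from the monotonicity of $\bar\psi$, but the family $(\delta_\gamma)$ must be chosen coherently — for example by fixing a monotone cofinal sequence in $\Gamma$ and using it uniformly on matching cofinal pieces of the domain — so that weak monotonicity and the biconditional survive the varying choice of subtractant. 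Making this coherence argument precise is the main technical point of the proof.
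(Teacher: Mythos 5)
Your first case goes through where the maximum of $\Psi$ is genuinely attained, but one of its supporting claims is false: you assert that in the ``gap'' branch of Proposition \ref{trichotomy} the set $\bar\psi(\Gamma)=\Psi$ attains a maximum, whereas the trichotomy says exactly one of the three alternatives holds, so in the gap branch $\Psi$ has \emph{no} maximum. You can still subtract the gap element $M$ itself (then $\Psi<M$ gives $\sigma(\gamma)<0$), but your right-shift argument then breaks: Lemma \ref{Fundlemofascouples} compares $\psi(g)$ with $\psi(h)$ for $g,h\in G^{\neq0}$, and a gap is not of the form $\psi(h)$, so $v_G(\bar\psi(\gamma)-M)>\gamma$ does not follow from it. The paper obtains this instead from the cut-point machinery: a gap or a maximum of $\Psi$ is a regular cut point (Lemma \ref{gapiscutpointLem}(i)), hence $\psi-M$ has cut point $0$ (Lemma \ref{translatebygap}), which is exactly the right-shift statement via Lemma \ref{sigmacomesfrompsiLem}.

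The more serious problem is the asymptotic integration case, which you explicitly leave open and which I do not believe can be completed along the lines you sketch. With $\sigma(\gamma):=\bar\psi(\gamma)-\bar\psi(\delta_\gamma)$ the biconditional requires $\bar\psi(\delta_\delta)-\bar\psi(\delta_\gamma)<\bar\psi(\delta)-\bar\psi(\gamma)$ whenever $\bar\psi(\gamma)<\bar\psi(\delta)$; since $\bar\psi$ need not be injective, one can have $\bar\psi(\gamma)=\bar\psi(\delta_\gamma)$ while $\bar\psi(\delta)<\bar\psi(\delta_\delta)$ for $\gamma<\delta_\gamma\leq\delta<\delta_\delta$, and then $\sigma(\gamma)=0>\sigma(\delta)$ although $\bar\psi(\gamma)<\bar\psi(\delta)$, so the required equivalence fails; Lemma \ref{Fundlemofascouples} only bounds valuations of differences from below and cannot prevent this. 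The paper sidesteps the issue entirely: under asymptotic integration $D_G$ is a strictly increasing bijection, so one sets $\sigma(v_G(g)):=\chi(g):=D_G^{-1}(\psi(g))$ (the centripetal precontraction of Proposition \ref{exprank}); strict monotonicity of $D_G^{-1}$ gives the biconditional immediately, and the precontraction property gives the right-shift, with no choices to make coherent. You should replace your second case by this argument.
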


    \begin{proof}
     We make a case distinction following Lemma \ref{trichotomy}.
     Assume first that $\psi$ has a gap or a maximum 
    $c$. Set $\psi':=\psi-c$ and let $\sigma(v_G(g)):=\psi'(g)$.  By Lemma \ref{translatebygap}, $0$  is a cut point for $\psi'$, so it follows from 
    Lemma \ref{sigmacomesfrompsiLem} that $\sigma$ is an increasing right-shift. It is clear
 that $\sigma(v_G(g))\leq\sigma(v_G(h))\Leftrightarrow \psi'(g)\leq\psi'(h)$ holds. Since 
$\psi'(g)\leq\psi'(h)\Leftrightarrow \psi(g)\leq\psi(h)$, it follows that $\sigma$ has the desired properties. Now we just have to consider the case where 
$(G,\psi)$ has asymptotic integration. In that case, let  $\chi$ be the same map as in Proposition \ref{exprank}, i.e. 
$\chi(0):=0$, $\chi(g):=\int\psi(g)$ for all $g<0$ and $\chi(g):=-\chi(-g)$ for all $g>0$. Now define $\sigma$ by 
$\sigma(v_G(g)):=\chi(g)$. Since $\chi$ is a centripetal precontraction, it follows that $\sigma$ is an increasing right-shift. It is also clear 
from the definition of $\sigma$ that $\sigma(v_G(g))\leq\sigma(v_G(h))\Leftrightarrow \chi(g)\leq\chi(h)$. Moreover, we know from 
\cite[Proposition 2.3]{Aschdries2} that the map 
 $D_G$ is increasing, so $\int$ is also increasing, so
$\chi(g)=\chi(h)\Leftrightarrow \psi(g)=\psi(h)$ holds, which proves that $\sigma$ satisfies the desired properties. 
    \end{proof}

For any 
   increasing map $\sigma:\Gamma\to G^{\leq0}$ and any $f\in\sigma(\Gamma)$, we now set \newline
  $\groupsigplus:=\{g\in G\mid \sigma(v_G(g))\geq f\}\cup\{0\}$ and 
  $\groupsigminus:=\{g\in G\mid \sigma(v_G(g))>f\}\cup\{0\}$.  We have the following: 

\begin{Lem}\label{shiftsubgroupsLem}
 Let $\sigma:\Gamma\to G^{\leq0}$ be an increasing map. Then for any $f\in\sigma(\Gamma)$, the sets 
$\groupsigplus$ and $\groupsigminus$ are convex subgroups 
of $G$.
\end{Lem}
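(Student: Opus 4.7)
The plan is to verify the two defining properties of a convex subgroup in turn, and in both cases the key tool will simply be that $\sigma$ is increasing together with the basic inequalities for the natural valuation $v_G$. I will write the argument for $\groupsigplus$; the proof for $\groupsigminus$ is identical after replacing $\geq f$ by $>f$ everywhere.

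For the subgroup property, note first that $v_G(-g)=v_G(g)$, hence $-g\in\groupsigplus\Leftrightarrow g\in\groupsigplus$, so $\groupsigplus$ is closed under inversion. For closure under addition, take $g,h\in\groupsigplus$. If $g+h=0$ the sum lies in $\groupsigplus$ by definition. Otherwise $g,h\neq 0$, and the valuation inequality gives $v_G(g+h)\geq\min(v_G(g),v_G(h))$. Applying the increasing map $\sigma$ yields
\[\sigma(v_G(g+h))\geq\min(\sigma(v_G(g)),\sigma(v_G(h)))\geq f,\]
so $g+h\in\groupsigplus$.

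For $\leq$-convexity, suppose $g\in\groupsigplus$ and $h\in G$ satisfies $\lvert h\rvert\leq\lvert g\rvert$. If $h=0$ there is nothing to prove; otherwise $v_G(h)\geq v_G(g)$ by definition of the natural valuation, and applying $\sigma$ gives $\sigma(v_G(h))\geq\sigma(v_G(g))\geq f$, so $h\in\groupsigplus$.

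There is really no main obstacle here: both checks are immediate once one unpacks the definition of $\groupsigplus$, observes that membership depends only on $v_G(g)$ (so the cases $g=0$ and $-g$ are automatic), and uses the fact that $\sigma$ is increasing. The only mild point worth flagging is to handle the trivial case $g+h=0$ separately when verifying closure under sum, since we have to patch in the element $0$ by hand in the definition.
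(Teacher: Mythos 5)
Your proof is correct and is essentially the paper's argument unpacked: the paper simply observes that $\sigma\circ v_G$ is a valuation on $G$ which coarsens $v_G$, and your two verifications (the ultrametric inequality for the subgroup property, and $\vert h\vert\leq\vert g\vert\Rightarrow v_G(h)\geq v_G(g)$ for convexity) are exactly what that observation encodes. No gaps; the explicit handling of $0$ and of $-g$ is fine.
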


\begin{proof}
Just note that $\sigma\circ v_G$ is a valuation and a coarsening of $v_G$.
\end{proof}

 Lemma \ref{shiftsubgroupsLem} allows us to define the group $\compsig:=\groupsigplus/\groupsigminus$. Since 
 $\groupsigminus$ is convex, $\compsig$ is naturally an ordered group. 
This allows us to partially answer Question \ref{question1}:

\begin{Thm}\label{answertoquestion1}
     Let $G$ be an ordered abelian group and $k$ a field (respectively, an ordered field). Let $\Gamma$ denote the value chain of $G$. 
     Then there exists a H-derivation $D$ on $K:=k((G))$ making $(K,v,D)$ a  differential-valued field (respectively, a H-field) if and only if there exists 
     an increasing map $\sigma:\Gamma\to G^{\leq0}$ such that the following holds: 
	\begin{enumerate}[(1)]
	 \item The map $v_G\circ\sigma$ is  a right-shift
	  \item For any $f\in\sigma(\Gamma)$, $\compsig$ is embeddable into $(k,+)$ (respectively, $(k,+,\leq)$).
	\end{enumerate}
    \end{Thm}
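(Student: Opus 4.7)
The plan is to use Theorem \ref{mainThm} as a bridge: it translates the existence of an H-derivation on $k((G))$ with prescribed H-type asymptotic couple $(G,\psi)$ into the condition that every component $(B_{\psi})_{\lambda}$ of the valued group $(G,\psi)$ embeds into $k$ (resp.\ into $(k,+,\leq)$). Lemmas \ref{sigmacomesfrompsiLem} and \ref{shiftexistenceLem} then supply the two directions of the correspondence between H-type asymptotic couples on $G$ and increasing right-shifts $\sigma:\Gamma\to G^{\leq0}$, so that conditions (1)--(2) of the theorem become exactly the hypotheses needed to apply Theorem \ref{mainThm}.

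For the ``$\Leftarrow$'' direction, given such a $\sigma$, I will set $\psi(g):=\sigma(v_G(g))$ for $g\in G^{\neq0}$. The assumed monotonicity of $\sigma$ together with condition (1) is precisely the statement that $\sigma$ is an increasing right-shift in the extended sense preceding Lemma \ref{sigmacomesfrompsiLem}, so that lemma yields that $(G,\psi)$ is an H-type asymptotic couple with cut point $0$. Unwinding the definitions, $\Psi=\sigma(\Gamma)$ and, for each $\lambda\in\Psi$, $(G_{\psi})^{\lambda}=G_{\sigma}(\lambda,+)$ and $(G_{\psi})_{\lambda}=G_{\sigma}(\lambda,-)$, so $(B_{\psi})_{\lambda}=H_{\sigma}(\lambda)$ as ordered groups. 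Condition (2) then supplies the embeddings required to apply Theorem \ref{mainThm}, which produces a derivation $D$ on $k((G))$ realizing $(G,\psi)$. Since $(G,\psi)$ is H-type, $D$ is an H-derivation.

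For the ``$\Rightarrow$'' direction, starting from an H-derivation $D$ with H-type asymptotic couple $(G,\psi)$, I will apply Lemma \ref{shiftexistenceLem} to produce $\sigma:\Gamma\to G^{\leq0}$; that lemma already delivers (1). Condition (2) requires a case distinction following the trichotomy of Proposition \ref{trichotomy}. If $\psi$ has a gap or a maximum $c$, then $\sigma(v_G(g))=\psi(g)-c$, so $G_{\sigma}(f,+)=(G_{\psi})^{f+c}$ and $G_{\sigma}(f,-)=(G_{\psi})_{f+c}$, whence $H_{\sigma}(f)=(B_{\psi})_{f+c}$ with $f+c\in\Psi$. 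If $(G,\psi)$ admits asymptotic integration, then $\sigma(v_G(g))=\chi(g)=D_G^{-1}(\psi(g))$; using strict monotonicity of $D_G$ on $G^{\neq0}$ together with the fact that $\psi$ is constant on archimedean classes (as $(G,\psi)$ is H-type), one verifies that $\sigma(v_G(g))\geq f$ is equivalent to $\psi(g)\geq D_G(f)$, giving $H_{\sigma}(f)=(B_{\psi})_{D_G(f)}$ with $D_G(f)\in\Psi$. In both cases, Theorem \ref{mainThm} applied to $D$ supplies embeddings of $(B_{\psi})_{\mu}$ into $k$ (resp.\ $(k,+,\leq)$) for every $\mu\in\Psi$, which via the identifications above yields (2). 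The main obstacle is this reidentification of $H_{\sigma}(f)$ with a single component $(B_{\psi})_{\mu}$; the asymptotic integration case is the subtler one, but is handled cleanly by the strict monotonicity of $D_G$.
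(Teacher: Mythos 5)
Your proposal is correct and follows essentially the same route as the paper: both directions pass through Theorem \ref{mainThm}, with Lemma \ref{sigmacomesfrompsiLem} supplying the asymptotic couple from $\sigma$ and Lemma \ref{shiftexistenceLem} supplying $\sigma$ from $\psi$, and the components $\compsig$ identified with the $\comppsi$. The only difference is that your case distinction on the trichotomy in the forward direction is avoidable: the stated conclusion of Lemma \ref{shiftexistenceLem}, namely $\sigma(v_G(g))\leq\sigma(v_G(h))\Leftrightarrow\psi(g)\leq\psi(h)$, already gives $\groupsigplus=\grouppsiplus$ with $\lambda=\psi(g)$ for $f=\sigma(v_G(g))$ directly, without unwinding how $\sigma$ was built.
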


 \begin{proof}
  Assume that $(K,v,D)$ is a differential-valued field having a H-type asymptotic couple $(G,\psi)$. By Theorem \ref{mainThm}, it follows that
$\comppsi$ is embeddable in $(k,+)$ for every $\lambda\in\Psi$. Now take $\sigma$ as in Lemma \ref{shiftexistenceLem}. Let
$f\in\sigma(\Gamma)$, $f=\sigma(v_G(g))$ for some $g\in G$. We have 
  $\sigma(v_G(h))\geq f\Leftrightarrow \sigma(v_G(h))\geq \sigma(v_G(g))\Leftrightarrow \psi(h)\geq \psi(g)$. It follows that 
 $\groupsigplus=\grouppsiplus$, where $\lambda=\psi(g)$. Similarly, $\groupsigminus=\grouppsiminus$, hence $\compsig=\comppsi$, so 
$\compsig$ is embeddable into $(k,+)$. If $(K,\leq,D)$ is a H-field then  by Theorem \ref{mainThm}, 
$\compsig$ must be embeddable as an ordered group in $(k,+,\leq)$. This proves one direction of the theorem, let us prove the converse. Assume that such a
$\sigma$ as in the theorem exists. By Lemma \ref{sigmacomesfrompsiLem}, there exists $\psi$ on $G$ making $(G,\psi)$ a H-type asymptotic couple and such that $\sigma$ is  induced by 
$\psi$. It follows that, for each $\lambda\in\Psi$, we have $\lambda\in\sigma(\Gamma)$,
$\grouppsiplus=G_{\sigma}(\lambda,+)$ and $\grouppsiminus=G_{\sigma}(\lambda,-)$. By assumption, it follows that each 
$\comppsi$ is embeddable into $(k,+)$, and the existence of $D$ then follows from Theorem \ref{mainThm}. If each 
$\compsig$ is embeddable into $(k,+,\leq)$, then so is each $\comppsi$, and we conclude by Theorem \ref{mainThm} again.
 \end{proof}

  \begin{Rem}
   If we are given a $\sigma$ as in Theorem \ref{answertoquestion1}, we can explicitly construct $D$. We first define $\psi$ on $G$ by 
  $\psi(g):=\sigma(v_G(g))$. This gives us a H-type asymptotic couple $(G,\psi)$. We then define $D$ with formula 
  (\ref{defofder}) above.
  \end{Rem}

      \subsection{Hardy-type derivations}\label{Hardytypesection}
       The goal of this section is to characterize fields of power series which can be endowed with a Hardy-type derivation as defined in \cite{KuhlmannMatusinski}. 
       \begin{Def}\label{Hardytypederivation}
If $(G,\psi)$ is an asymptotic couple, we say that $\psi$ is \textbf{of Hardy type} if $(G,\psi)$ is H-type and 
$\psi(g)=\psi(h)\Rightarrow v_G(g)=v_G(h)$ for all $g,h\in G^{\neq0}$. We say that a derivation $D$ on an ordered field $(K,\leq)$ is of Hardy type if 
$(K,\leq,D)$ is a $H$-field with asymptotic couple $(G,\psi)$ such that $\psi$ is of Hardy type.
\end{Def}
 The natural derivation of
a Hardy field is an example of a Hardy-type derivation. Note that if $\psi$ is Hardy-type, then we have $v_G(g)=v_G(h)\Leftrightarrow\psi(g)=\psi(h)$, which 
means that $\psi$ and $v_G$ are equivalent as valuations. In particular,  
the valued groups $(G,v_G)$ and $(G,\psi)$ have the same components.

In general, H-derivations are not necessarily Hardy-type derivations. However, the two notions coincide in a field of power series if the field of coefficients is 
archimedean: 
\begin{Prop}\label{Hderivationsforarchfields}
 Let $k$ be an archimedean ordered field, $G$ an ordered abelian group, $K:=k((G))$ and 
$D$ a derivation on $K$. Then $(K,\leq,D)$ is a H-field if and only if $D$ is of Hardy-type.
\end{Prop}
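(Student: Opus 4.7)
The plan is to prove only the nontrivial direction: since Definition \ref{Hardytypederivation} builds ``$H$-field'' into the meaning of ``Hardy-type derivation'', the ``if'' direction is free, and the whole content is the implication that any $H$-derivation on $K = k((G))$ with archimedean $k$ is Hardy-type. So I assume $(K,\leq,D)$ is an $H$-field with associated asymptotic couple $(G,\psi)$ (automatically $H$-type) and must verify the additional Hardy condition $\psi(a)=\psi(b) \Rightarrow v_G(a)=v_G(b)$.

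The first ingredient is the forced embedding coming from the structure theorem. By Proposition \ref{embeddingnecessaryProp}, since $(K,\leq,D)$ is an $H$-field, for each $\lambda \in \Psi$ there is an order-reversing group embedding $u_\lambda: \comppsi \hookrightarrow (k,+,\leq)$; composing with negation yields an order-preserving embedding of $\comppsi$ into $(k,+,\leq)$. Because $k$ is archimedean, every subgroup of $(k,+,\leq)$ is archimedean, so \emph{each component $\comppsi$ of the valued group $(G,\psi)$ is archimedean}. Thus the problem reduces to the following purely group-theoretic statement about $H$-type asymptotic couples: if every component $\comppsi$ is archimedean as an ordered group, then $\psi(a)=\psi(b) \Rightarrow v_G(a)=v_G(b)$.

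To prove this statement, I fix $a,b \in G^{\neq 0}$ with $\psi(a)=\psi(b) =: \lambda$. Using $\psi(-x)=\psi(x)$ and $v_G(-x)=v_G(x)$, I may assume $a,b>0$, and the images $\bar a, \bar b \in \comppsi = \grouppsiplus / \grouppsiminus$ are both nonzero. Suppose for contradiction $v_G(a) \neq v_G(b)$, say $v_G(a) < v_G(b)$, which means $a$ is archimedean-larger than $b$, so $nb < a$ in $G$ for every $n \in \mathbb{N}_{\geq 1}$. Archimedeanness of $\comppsi$ supplies some $n$ with $n\bar b \geq \bar a$; unpacking the quotient order (either $\overline{nb}=\bar a$, or $\overline{nb} \neq \bar a$ together with $nb > a$ in $G$), the second alternative is ruled out by $nb<a$, forcing $\overline{nb}=\bar a$, i.e.\ $a-nb \in \grouppsiminus$. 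If $a=nb$ then $v_G(a)=v_G(b)$, contradicting $v_G(a)<v_G(b)$. Otherwise $a-nb \neq 0$ and $\psi(a-nb)>\lambda$; but $v_G(a)<v_G(nb)$ makes the ultrametric inequality an equality, $v_G(a-nb)=v_G(a)$, and consistency of $\psi$ with $v_G$ (from $H$-type) gives $\psi(a-nb)=\psi(a)=\lambda$, the desired contradiction.

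I expect the only delicate step to be the quotient-order juggling in the last paragraph: using the explicit definition of $\leq$ on $\comppsi$ recalled just before Proposition \ref{ugammaopProp} to convert ``$n\bar b \geq \bar a$ in the archimedean quotient'' into ``$a-nb \in \grouppsiminus$ in $G$,'' and then playing this off against the strict inequality $nb<a$ in $G$ that comes from the archimedean comparison of $a$ and $b$ in $(G,v_G)$. Everything else is immediate from Proposition \ref{embeddingnecessaryProp}, archimedeanness of $k$, and the definition of $H$-type.
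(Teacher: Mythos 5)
Your proof is correct and follows essentially the same route as the paper: use Proposition \ref{embeddingnecessaryProp} (equivalently Theorem \ref{mainThm}) plus archimedeanness of $k$ to conclude each component $\comppsi$ is archimedean, then show archimedean components force $\psi(a)=\psi(b)\Rightarrow v_G(a)=v_G(b)$. Your final paragraph actually spells out the quotient-order comparison that the paper's proof asserts in one line (that $g,h$ in distinct archimedean classes of $G$ with $\psi(g)=\psi(h)$ would give non-archimedean-equivalent nonzero elements of $\comppsi$), so if anything your write-up is slightly more complete.
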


 \begin{proof}
  Assume $(K,\leq,D)$ is a H-field with asymptotic couple $(G,\psi)$. By Theorem 
  \ref{mainThm}, each component of the valued group $(G,\psi)$ is embeddable as an ordered group into 
 $(k,+,\leq)$. It follows that each component of the valued group $(G,\psi)$ is archimedean, and it follows that $\psi$ must be of Hardy type. Indeed, if $\psi$ is not 
of Hardy type, then there are $g,h\in G$ with $v_G(g)>v_G(h)$ and $\psi(g)=\psi(h)$. It follows that 
$g,h$ are not archimedean-equivalent. Now set $\lambda:=\psi(h)$. Then $g+\grouppsiminus$, $h+\grouppsiminus$ are two non-zero elements of 
$\comppsi$, but they are not archimedean-equivalent, which is a contradiction.
 \end{proof}

 We can now answer a variant of Question \ref{question1}, where $D$ is required to be of Hardy type. The criterion for the existence of $D$ in this case is 
 simpler than the criterion for the existence of a H-derivation given by Theorem \ref{answertoquestion1}. In particular, the groups $\compsig$ from Theorem \ref{answertoquestion1} do not appear anymore:
\begin{Thm}\label{HardytypeThm}
 Let $G$ be an ordered abelian group and $k$  an ordered field. Let $\Gamma$ denote the value chain of $G$. 
     Then there exists a Hardy-type derivation $D$ on $K:=k((G))$  if and only if the following conditions are satisfied: 
      \begin{enumerate}[(1)]
       \item Each $\compv$ is embeddable as an ordered group into $(k,+,\leq)$.
      \item There exists 
     a strictly increasing right-shift $\sigma:\Gamma\to G^{\leq0}$.
      \end{enumerate}
 
\end{Thm}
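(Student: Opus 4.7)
The plan is to reduce the theorem to the combination of Theorem~\ref{mainThm} with Lemmas~\ref{sigmacomesfrompsiLem} and~\ref{shiftexistenceLem}, using the following key observation: when $\psi$ is of Hardy type, the induced map $\hat{\psi}:\Gamma\to\Psi$, $\hat{\psi}(v_G(g))=\psi(g)$, is a strictly increasing bijection, and consequently the components $\comppsi$ of the valued group $(G,\psi)$ coincide with the components $\compv$ of $(G,v_G)$. Indeed, for $\lambda=\hat{\psi}(\gamma)$, the condition $\psi(g)\geq\lambda$ becomes $\hat{\psi}(v_G(g))\geq\hat{\psi}(\gamma)$ which, by strict monotonicity, is equivalent to $v_G(g)\geq\gamma$; hence $\grouppsiplus=\groupvplus$ and $\grouppsiminus=\groupvminus$, giving $\comppsi=\compv$.

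For the forward direction, suppose $D$ is Hardy-type with associated asymptotic couple $(G,\psi)$. Theorem~\ref{mainThm} gives an order-preserving embedding of each $\comppsi$ into $(k,+,\leq)$; by the observation above this is precisely condition~(1). For condition~(2), apply Lemma~\ref{shiftexistenceLem} to obtain an increasing right-shift $\sigma:\Gamma\to G^{\leq 0}$ with $\sigma(v_G(g))\leq\sigma(v_G(h))\Leftrightarrow \psi(g)\leq\psi(h)$. The Hardy-type condition $\psi(g)=\psi(h)\Leftrightarrow v_G(g)=v_G(h)$ then upgrades $\sigma$ from increasing to strictly increasing.

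For the converse, given $\sigma$ as in~(2), define $\psi(g):=\sigma(v_G(g))$ for $g\neq 0$. Since a strictly increasing right-shift is a fortiori an increasing right-shift, Lemma~\ref{sigmacomesfrompsiLem} applies and yields that $(G,\psi)$ is an H-type asymptotic couple with cut point $0$. Strict monotonicity of $\sigma$ immediately forces $\psi(g)=\psi(h)\Leftrightarrow v_G(g)=v_G(h)$, so $\psi$ is of Hardy type. Invoking the key observation again, every $\comppsi$ coincides with some $\compv$ and therefore embeds as an ordered group into $(k,+,\leq)$ by~(1); Theorem~\ref{mainThm} then produces an H-field derivation $D$ on $K$ with asymptotic couple $(G,\psi)$, which is Hardy-type by construction.

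The only point that requires real care is the identification $\comppsi=\compv$, which hinges on the fact that strict monotonicity of $\hat{\psi}$ forces the level sets $\{g\mid\psi(g)\geq\lambda\}$ and $\{g\mid v_G(g)\geq\gamma\}$ (for the corresponding $\lambda,\gamma$) to coincide; once this identification is in hand, the rest is a direct invocation of the machinery already developed in Sections~\ref{acsubsection} and~\ref{derivationfromacsection}, and in particular the criterion of Theorem~\ref{mainThm}.
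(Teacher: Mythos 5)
Your proposal is correct and follows essentially the same route as the paper: both directions rest on Theorem \ref{mainThm} together with Lemmas \ref{sigmacomesfrompsiLem} and \ref{shiftexistenceLem}, with the Hardy-type hypothesis used to pass between increasing and strictly increasing shifts. Your ``key observation'' that $\comppsi=\compv$ is exactly the identification of components the paper invokes (stated just after Definition \ref{Hardytypederivation}); you merely make its verification explicit.
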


\begin{proof}
 Assume there exists a Hardy-type derivation $D$ on $K$, denote by $\psi$ the map induced by the logarithmic derivative on $G$. 
 By Theorem \ref{mainThm}, each $\comppsi$ is embeddable into $(k,+,\leq)$.  
 Since $D$ is Hardy-type, it follows that $\compv$ is embeddable into $(k,+,\leq)$ for every $\gamma\in\Gamma$ ($(G,v_G)$ and $(G,\psi)$ have the same components).
 This shows (1). Now take $\sigma$ as in Lemma \ref{shiftexistenceLem}. For all $g,h\in G^{\neq0}$, we have by definition 
 of $\sigma$: $\sigma(v_G(g))\leq\sigma(v_G(h))\Leftrightarrow \psi(g)\leq\psi(h)$, and  because 
 $\psi$ is Hardy-type we have  $v_G(g)\leq v_G(h)\Leftrightarrow\psi(g)\leq\psi(h)$. This shows that $\sigma$ is strictly increasing, hence (2).
 Conversely, assume that (1) and (2) hold.  
By Lemma \ref{sigmacomesfrompsiLem} there is $\psi$ on $G$ such that 
$\sigma$ is induced by $\psi$. Since  $\sigma$ is strictly increasing, it is in particular  injective, which implies that 
$\psi$ is Hardy-type. It then follows from (1) that $\comppsi$ is embeddable into $(k,+,\leq)$ for every 
$\lambda\in\Psi$. The existence of $D$ is then given by Theorem \ref{mainThm}. 
\end{proof}

In the case where $k$ contains $\R$, we can overlook condition (1) of Theorem \ref{HardytypeThm}:

\begin{Cor}
 Let $k$ be an ordered field containing $\R$ and $G$ an ordered abelian group. 
 Let $\Gamma$ denote the value chain of $G$. 
     Then there exists a Hardy-type derivation $D$ on $K:=k((G))$  if and only if there exists 
     a strictly increasing right-shift $\sigma:\Gamma\to G^{\leq0}$.
\end{Cor}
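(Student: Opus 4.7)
The plan is to derive this corollary directly from Theorem \ref{HardytypeThm}, which gives the existence of a Hardy-type derivation on $K = k((G))$ as equivalent to the conjunction of two conditions: (1) every component $\compv$ embeds as an ordered group into $(k,+,\leq)$, and (2) there exists a strictly increasing right-shift $\sigma:\Gamma\to G^{\leq0}$. The forward direction of the corollary is then immediate, since the existence of a Hardy-type derivation gives us (2) by the theorem. So the content lies in showing that, under the hypothesis $\R\subseteq k$, condition (1) is automatically satisfied, so that (2) alone suffices for the converse.

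For this, I would invoke the standard fact that the components of the natural valuation $v_G$ on an ordered abelian group are archimedean. Explicitly, fix $\gamma\in\Gamma$ and consider two non-zero elements $\bar g, \bar h\in\compv$, represented by $g,h\in (G_{v_G})^\gamma\setminus(G_{v_G})_\gamma$, i.e.\ $v_G(g)=v_G(h)=\gamma$. By the very definition of $v_G$, this means that $g$ and $h$ are archimedean-equivalent in $G$, so there exist $m,n\in\N$ with $|g|\leq m|h|$ and $|h|\leq n|g|$. Passing to the quotient $\compv=(G_{v_G})^\gamma/(G_{v_G})_\gamma$ preserves these inequalities with respect to the induced order, so $\bar g$ and $\bar h$ are archimedean-equivalent in $\compv$. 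Since this holds for any pair of non-zero elements, $\compv$ is archimedean.

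Now I would appeal to the fact recalled in Section \ref{preliminarysection} that every archimedean ordered abelian group embeds as an ordered group into $(\R,+,\leq)$. Composing such an embedding with the inclusion $\R\hookrightarrow k$ (which is an embedding of ordered groups since $\R\subseteq k$ as ordered fields) yields an order-preserving group embedding $\compv\hookrightarrow(k,+,\leq)$. Hence condition (1) of Theorem \ref{HardytypeThm} is automatic whenever $\R\subseteq k$, and the corollary follows by applying the theorem.

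The argument is essentially mechanical once the archimedean character of $\compv$ is noted; no step should present a genuine obstacle. The only subtlety worth stating carefully is the verification that the induced order on the quotient $\compv$ really transports the archimedean inequalities from $G$, but this is guaranteed by the convexity of $(G_{v_G})_\gamma$ in $(G,\leq)$.
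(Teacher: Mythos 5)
Your proposal is correct and is essentially identical to the paper's own proof: the paper also deduces the corollary from Theorem \ref{HardytypeThm} by observing that each $\compv$ is archimedean, hence embeds into $(\R,+,\leq)\subseteq(k,+,\leq)$, so that condition (1) is automatic. Your explicit verification that the components of the natural valuation are archimedean is a detail the paper leaves implicit, but the route is the same.
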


  \begin{proof}
   Since each $\compv$ is archimedean, and since $k$ contains $\R$, condition (1) of Theorem \ref{HardytypeThm} is always satisfied, so 
 $D$ exists if and only if condition (2) of Theorem \ref{HardytypeThm} is satisfied.
  \end{proof}

 Finally, if $k$ is archimedean, we can simplify Theorem \ref{answertoquestion1} to give a criterion for the existence of a H-derivation on $K$:
\begin{Cor}
 Let $k$ be an archimedean ordered field and $G$ an ordered abelian group with value chain $\Gamma$. There exists a derivation $D$ on $K:=k((G))$ making 
$(K,\leq,D)$ a H-field if and only if the following conditions are satisfied: 
      \begin{enumerate}[(1)]
       \item Each $\compv$ is embeddable as an ordered group into $(k,+,\leq)$.
      \item There exists 
     a strictly increasing right-shift $\sigma:\Gamma\to G^{\leq0}$.
  \end{enumerate}
  
\end{Cor}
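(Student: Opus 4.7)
The plan is to observe that this corollary is an immediate combination of Proposition \ref{Hderivationsforarchfields} and Theorem \ref{HardytypeThm}. Since $k$ is assumed archimedean, Proposition \ref{Hderivationsforarchfields} tells us that a derivation $D$ on $K=k((G))$ makes $(K,\leq,D)$ a H-field if and only if $D$ is of Hardy type. This collapses the question ``when does $K$ admit a H-derivation?'' to the question ``when does $K$ admit a Hardy-type derivation?''.

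Having made this reduction, I would simply apply Theorem \ref{HardytypeThm}, which provides exactly the desired characterization: the existence of a Hardy-type derivation is equivalent to the conjunction of (1), namely each component $\compv$ of the naturally-valued group $(G,v_G)$ embeds as an ordered group into $(k,+,\leq)$, and (2), namely the existence of a strictly increasing right-shift $\sigma:\Gamma\to G^{\leq0}$. Substituting ``Hardy-type derivation'' by ``H-derivation'' in Theorem \ref{HardytypeThm} via the archimedean hypothesis finishes the proof in both directions.

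There is no real obstacle here since all the conceptual work has already been done: Proposition \ref{Hderivationsforarchfields} is the bridge that identifies H-derivations with Hardy-type derivations in the archimedean setting (the key point being that by Theorem \ref{mainThm} the components $\comppsi$ must embed into $(k,+,\leq)$, forcing them to be archimedean, which in turn forces $\psi$ to be of Hardy type). Therefore the proof can be written in essentially two lines: by Proposition \ref{Hderivationsforarchfields}, a derivation $D$ makes $(K,\leq,D)$ a H-field if and only if $D$ is Hardy-type; by Theorem \ref{HardytypeThm}, such a Hardy-type derivation exists if and only if (1) and (2) hold.
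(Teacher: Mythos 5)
Your proposal is correct and follows exactly the paper's own argument: Proposition \ref{Hderivationsforarchfields} identifies H-derivations with Hardy-type derivations over an archimedean coefficient field, and Theorem \ref{HardytypeThm} then gives conditions (1) and (2). Nothing further is needed.
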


\begin{proof}
 By Proposition \ref{Hderivationsforarchfields}, there exists a H-derivation on $K$ if and only if there exists a Hardy-type derivation on $K$. The claim then follows directly from 
 Theorem \ref{HardytypeThm}.
\end{proof}

\subsection{Realizing a linearly ordered set as a principal differential rank}\label{rankrealizedsection}
  Assume $(K,v,D)$ is a pre-differential valued field with principal differential rank $P$ and principal unfolded differential rank $Q$. We know by Proposition \ref{unfrankProp} that 
$P$ is either a principal final segment of $Q$ or equal to $Q$.
 The goal of this section is to show 
a converse statement, i.e that any pair $(P,Q)$ of linearly ordered sets, where $P$ is a principal final segment of $Q$ or $Q=P$, can be realized as the pair
``(principal differential rank, principal unfolded differential rank)'' of a certain  field of power series endowed with a Hardy-type derivation.

 The construction is done in three steps: we first show that any linearly ordered set $Q$ 
 can be realized as the principal $\omega$-rank of  a certain ordered set. We actually give an explicit example. 
We then show that there exists an asymptotic couple $(G,\psi)$ whose principal $\psi$-rank is $P$ and whose principal unfolded
$\psi$-rank is $Q$, where $P$ is any principal final segment of $Q$ or $Q$ itself. Finally, we use Theorem \ref{mainThm} to obtain the desired field.

 \begin{Ex}\label{examplegammaomega}
    We want to give an  explicit example of an ordered set $(\Gamma,\leq)$ with arbitrary principal $\omega$-rank.
  \begin{enumerate}[(a)]
   \item We first construct an example of principal rank 1. Take $\Gamma_1:=\Z$ ordered as usual an define $\omega_1(n):=n+1$. Then 
  it is easy to check that $\omega_1$ is a strictly increasing right-shift and that the principal $\omega_1$-rank of $(\Gamma_1,\leq)$ is $1$.
   \item We now construct an example which has principal $\omega$-rank $(Q,\leq)$, where $(Q,\leq)$ is an arbitrary linearly ordered set. Define 
    $\Gamma:=Q\times \Gamma_1$ and order $\Gamma$ as follows:\newline
    $(a,\gamma)\leq(b,\delta)\Leftrightarrow (a<^{\ast}b)\vee (a=b\wedge \gamma\leq\delta)$, where $<^{\ast}$ denotes the 
    reverse order of $<$. Now define $\omega(a,n):=(a,\omega_1(n))$. 
    One sees easily that $\omega$ is a strictly increasing right-shift.  Now consider the map: 
    $a\mapsto \{(b,n)\in\Gamma\mid a\leq b, n\in\Z\}$ from $Q$  to the set of final segments of $(\Gamma,\leq)$. One can see that 
    this is a strictly increasing bijection from $(Q,\leq)$ to the principal $\omega$-rank of $(\Gamma,\leq)$, so $(Q,\leq)$ is the 
    principal $\omega$-rank of $(\Gamma,\leq)$.
  \end{enumerate}

 \end{Ex}

\begin{Prop}\label{acwithgivenrankProp}
 Let $Q$ be a totally ordered set and $P$ a final segment of $Q$ such that $P$ is either equal to $Q$ or is a principal final segment of 
 $Q$. Then there exists a H-type asymptotic couple 
 $(G,\psi)$ whose principal $\psi$-rank is $P$ and whose principal unfolded $\psi$-rank is $Q$. Moreover, we can choose 
$(G,\psi)$ so that $\psi$ is of Hardy type.
\end{Prop}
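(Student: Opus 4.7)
The plan is to build $(G,\psi)$ in three moves: realize the chain $Q$ as the principal $\omega$-rank of a suitable ordered set $\Gamma$, lift $\omega$ to a Hardy-type shift $\psi_0$ on a Hahn group $G$ over $\Gamma$ (which has $0$ as cut point), and if $P\subsetneq Q$, translate $\psi_0$ by a carefully chosen element $g_0$ to introduce a non-zero cut point at the level that generates $P$. Concretely, Example \ref{examplegammaomega} provides $(\Gamma,\leq)$ together with a strictly increasing right-shift $\omega:\Gamma\to\Gamma$ whose principal $\omega$-rank is order-isomorphic to $Q$ via some $q\mapsto F_q$. Put $G:=\Hahn_{\gamma\in\Gamma}\R$; then $(G,v_G)$ has value chain $\Gamma$. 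Define $\sigma:\Gamma\to G^{\leq0}$ by $\sigma(\gamma):=-t^{\omega(\gamma)}$. Strict monotonicity of $\omega$ makes $\sigma$ a strictly increasing right-shift, so by Lemma \ref{sigmacomesfrompsiLem} the formula $\psi_0(g):=\sigma(v_G(g))$ makes $(G,\psi_0)$ an H-type asymptotic couple with $0$ as cut point and induced map $\omega$ on $\Gamma$. Injectivity of $\sigma$ makes $\psi_0$ of Hardy type.

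If $P=Q$, take $\psi:=\psi_0$. Then $0$ is the cut point, so Proposition \ref{unfrankProp}(iv) identifies the principal $\psi$-rank and the principal unfolded $\psi$-rank of $G$ with the principal $\psi_0$-rank, which by the group-chain part of Proposition \ref{fieldgroupchain} (applied to the valued group $(G,v_G)$) equals the principal $\omega$-rank of $\Gamma$, namely $Q$. Otherwise, let $P$ be the principal final segment of $Q$ generated by some $q_0$, pick $\alpha\in\Gamma$ in the fiber generating $F_{q_0}$, set $g_0:=-t^\alpha$, and define $\psi:=\psi_0+g_0$. Translation preserves both H-type and Hardy type. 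A direct computation gives $\Psi=\sigma(\Gamma)+g_0\subseteq g_0+G^{<0}$ (hence $\Psi<g_0$) and, for $h>0$, $D_G(h)=h+\psi_0(h)+g_0>g_0$ since $h+\psi_0(h)$ inherits the sign of $h$; so $g_0$ is a gap for $\psi$. Proposition \ref{unfrankProp}(v) then gives that the (principal) unfolded $\psi$-rank equals the (principal) $\psi_0$-rank, namely $Q$. Moreover $c:=g_0$ is a cut point for $\psi$, since $v_G(\psi(c))=v_G(-t^{\omega(\alpha)}-t^\alpha)=\alpha=v_G(c)$ (the term $-t^\alpha$ dominates because $\omega(\alpha)>\alpha$), so Lemma \ref{lemmacutpoint}(ii) applies.

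The final step is to identify the $\psi$-principal subgroup $G_c$ and apply Proposition \ref{unfrankProp}(iii). By Lemma \ref{compatiblecontainsf}, a convex subgroup $H$ of $G$ corresponding to a final segment $\Delta\subseteq\Gamma$ is $\psi$-compatible iff $\alpha\in\Delta$ and, for every $\gamma<\alpha$ and $g$ of valuation $\gamma$, $v_G(\psi(g))\in\Delta$ forces $\gamma\in\Delta$. The key case is $\omega(\gamma)\geq\alpha$ (equivalently, $\gamma$ lies in the fiber of $q_0$), where $v_G(\psi(g))=\alpha\in\Delta$ automatically, forcing $\gamma\in\Delta$; iterating forces the whole fiber of $q_0$ into $\Delta$, and then upward closure yields $F_{q_0}\subseteq\Delta$. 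One checks directly that $F_{q_0}$ itself is $\psi$-compatible, hence $G_c$ corresponds exactly to $F_{q_0}$, which is identified with $q_0\in Q$. By Proposition \ref{unfrankProp}(iii), the principal $\psi$-rank is the principal final segment of $Q$ generated by $q_0$, namely $P$. The main technical obstacle lies in this forcing step: one must carefully track the interaction between $\sigma(\gamma)$ and $g_0$ (in particular when $\omega(\gamma)=\alpha$, where the two terms have equal valuation) to verify that $F_{q_0}$ is both forced from above and sufficient from below.
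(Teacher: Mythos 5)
Your construction is correct and follows the paper's proof almost step for step: the same $\Gamma$ and $\omega$ from Example \ref{examplegammaomega}, the same Hahn group and lift via Lemma \ref{sigmacomesfrompsiLem}, the same translation-by-a-gap trick, and the same appeals to Proposition \ref{unfrankProp}(iv) and (v). The only divergence is the last step: you compute the $\psi$-principal subgroup $G_c$ explicitly via Lemma \ref{compatiblecontainsf} and the concrete structure of $\Gamma=Q\times\Z$ (the ``forcing'' you flag as the main obstacle), whereas the paper sidesteps that computation by choosing the translating element $c$ so that the $\psi_0$-principal subgroup it generates is exactly the subgroup $H$ corresponding to the generator of $P$, and then invoking Proposition \ref{unfrankProp}(ii); since every member of the principal unfolded rank is $\psi_0$-compatible, minimality of $H$ alone gives $\{H'\in Q\mid c\in H'\}=\{H'\in Q\mid H\subseteq H'\}=P$ with no case analysis on fibers.
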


\begin{proof}
 Let $(\Gamma,\leq)$ be an ordered set with a strictly increasing shift $\omega:\Gamma\to\Gamma$ such that the principal $\omega$-rank of $(\Gamma,\leq)$ is $Q$ 
(take for instance Example \ref{examplegammaomega}(b)). Let $G:=\Hahn_{\gamma\in\Gamma}\R$.
We know from Lemma  \ref{sigmacomesfrompsiLem} that there exists a map $\psi_0$ on $G^{\neq0}$ such that $(G,\psi_0)$ is a H-type asymptotic couple  with cut point $0$  and such that 
$\omega$ is the map induced by $\psi_0$ on $\Gamma$. Since $\omega$ is injective, it follows that $\psi$ is of Hardy type.
By proposition \ref{unfrankProp}(iv), the principal unfolded  $\psi_0$-rank of $(G,\psi_0)$ is equal to its 
principal  $\psi_0$-rank which is equal to $Q$ by Theorem \ref{diffrankthreelevelsprop}. If $Q=P$, then set 
$\psi:=\psi_0$, and then $(G,\psi)$ satisfies the condition we wanted. Now assume $Q\neq P$.
We know that $P$ is a principal final segment of $Q$, which means that there is  a $\psi_0$-principal convex subgroup $H$ of $G$ such that 
$P$ is  the set of $\psi_0$-principal convex subgroups of $G$ containing $H$. Now let 
$c\in H$ be such that $H$ is $\psi_0$-principal generated by $c$ and set $\psi(g):=\psi_0(g)+c$. 
Obviously, $\psi$ is Hardy-type. Now note that $c$ is either a gap or a maximum for $\psi$: indeed, by Lemma \ref{gapiscutpointLem}(ii), 
 we have for all $g>0$:
$\psi_0(g)\leq 0<\psi_0(g)+g$, hence $\psi_0+c\leq c<\psi_0(g)+c+g$. It then follows from 
Proposition \ref{unfrankProp}(v) that $Q$ is the principal unfolded $\psi$-rank of $G$. 
Moreover, we know from Proposition \ref{unfrankProp}(ii) that the principal $\psi$-rank of $G$ is the principal final segment of $Q$ generated by $H$, so it is $P$.
\end{proof}

    We can now state our Theorem:

   \begin{Thm}\label{rankrealizedThm}
     Let $Q$ be a totally ordered set and $P$ a final segment of $Q$ such that $P$ is either equal to $Q$ or is a principal final segment of 
 $Q$. Then there exists an ordered field  $k$, an ordered abelian group $G$ and a Hardy-type derivation $D$ on $K:=k((G))$ such that
  $(K,\leq,D)$ is a H-field of principal differential rank $P$ and of principal 
unfolded differential rank $Q$.
   \end{Thm}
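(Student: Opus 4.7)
The proof essentially amounts to assembling three ingredients that have already been developed in the paper. The plan is to first invoke Proposition \ref{acwithgivenrankProp} to obtain a Hardy-type asymptotic couple realizing the desired ranks at the group level, then to lift this to a field of power series using Theorem \ref{mainThm} (or equivalently Theorem \ref{HardytypeThm}), and finally to read off the differential and unfolded differential ranks via Theorem \ref{diffrankthreelevelsprop} together with the definition of the unfolded differential rank.

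More concretely, I would proceed as follows. First, apply Proposition \ref{acwithgivenrankProp} to the pair $(P,Q)$ to obtain an H-type asymptotic couple $(G,\psi)$ such that $\psi$ is of Hardy type, the principal $\psi$-rank of $G$ is $P$, and the principal unfolded $\psi$-rank of $G$ is $Q$. Inspecting the proof of that proposition, $G$ is actually chosen to be a Hahn product $\Hahn_{\gamma\in\Gamma}\R$, so all components $(B_{v_G})_\gamma$ are copies of $\R$; since $\psi$ is of Hardy type, the valued groups $(G,v_G)$ and $(G,\psi)$ have the same components, and in particular each $\comppsi$ is isomorphic (as an ordered group) to $\R$. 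Next, take $k:=\R$. Then each $\comppsi$ is trivially embeddable as an ordered group into $(k,+,\leq)$, so Theorem \ref{mainThm} supplies a derivation $D$ on $K:=k((G))$ making $(K,\leq,D)$ an H-field whose associated asymptotic couple is exactly $(G,\psi)$.

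Because $(G,\psi)$ is of Hardy type by construction, $D$ is by Definition \ref{Hardytypederivation} a Hardy-type derivation. It then remains to identify the ranks of $(K,v,D)$. By Theorem \ref{diffrankthreelevelsprop}, the principal differential rank of $(K,v,D)$ equals the principal $\psi$-rank of $G$, which is $P$. For the unfolded differential rank, Definition \ref{unfrankdef} says that the principal unfolded differential rank of $(K,v,D)$ is precisely the principal unfolded $\psi$-rank of $G$, which by our choice of $(G,\psi)$ is $Q$. This gives both conclusions simultaneously.

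The steps are essentially bookkeeping: the genuine difficulty was already concentrated in Proposition \ref{acwithgivenrankProp} (constructing the asymptotic couple with prescribed $\psi$-rank and unfolded $\psi$-rank, which required the case distinction on whether $P=Q$ or $P$ is a proper principal final segment and the trick of translating $\psi_0$ by an element $c$ that generates the principal subgroup cutting off $P$ inside $Q$) and in Theorem \ref{mainThm} (the explicit construction of $D$ via the formula (\ref{defofder}) together with summability). The only point that requires a small verification here is that the choice $k=\R$ is large enough, which is automatic because the Hahn product used in Proposition \ref{acwithgivenrankProp} has archimedean components; if desired, one could equally well take any ordered field containing $\R$.
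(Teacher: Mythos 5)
Your proposal is correct and follows essentially the same route as the paper: invoke Proposition \ref{acwithgivenrankProp} to get a Hardy-type asymptotic couple $(G,\psi)$ realizing $(P,Q)$, set $k:=\R$ so that each $\comppsi$ (being archimedean) embeds into $(\R,+,\leq)$, apply Theorem \ref{mainThm} to obtain the derivation, and read off the ranks from Theorem \ref{diffrankthreelevelsprop} and Definition \ref{unfrankdef}. The paper's own proof is exactly this argument.
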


   \begin{proof}
    By Proposition \ref{acwithgivenrankProp},  there exists a H-type asymptotic couple 
    $(G,\psi)$ which has principal $\psi$-rank $P$ and principal unfolded $\psi$-rank 
    $Q$, and such that $\psi$ is Hardy-type. Now set 
    $K:=\R((G))$.  Since $\psi$ is Hardy-type, each $\comppsi$ is archimedean, so it is embeddable into $(\R,+,\leq)$. By theorem \ref{mainThm}, there exists a derivation $D$ on 
    $K$ making $(K,\leq,D)$ a H-field with asymptotic couple $(G,\psi)$. By Theorem \ref{diffrankthreelevelsprop}, the principal differential rank of $(K,\leq,D)$ is equal to 
    $P$ and by definition the principal unfolded differential rank of $(K,\leq,D)$ is equal to $Q$.       
   \end{proof}

 \begin{Rem}\label{rankremark}
 \begin{enumerate}[(i)]
  \item 
The differential rank of $(K,v,D)$ is  completely determined up to isomorphism by 
its principal differential rank. Similarly, 
the unfolded differential rank of $(K,v,D)$ is  completely determined up to isomorphism by 
its principal unfolded differential rank (see Remark \ref{ordercompletionrem}).
If $P$ and $Q$ are as in Theorem \ref{rankrealizedThm}, then their order completions are realized as the 
differential rank, respectively as the unfolded differential rank of $(K,v,D)$.
\item   It would be interesting to improve Theorem \ref{rankrealizedThm} by requiring $K$ to have asymptotic integration. 
 However, our construction in Proposition \ref{acwithgivenrankProp} gives us an asymptotic couple with a gap. 
 In \cite[Section 4]{Gehret}, Gehret gave methods to extend a given asymptotic couple with a gap into an asymptotic couple with 
 asymptotic integration. However, this construction changes the unfolded differential rank, so we cannot use it in 
 Theorem \ref{rankrealizedThm}. It is unknown if Theorem \ref{rankrealizedThm} remains true if we require the field to have asymptotic integration.
 \end{enumerate}

 \end{Rem}

\end{document}